\theoremstyle{rem}
\numberwithin{equation}{section}
\newtheorem{theorem}{Theorem}[section]
\newtheorem{corollary}{Corollary}[section]
\newtheorem{lemma}{Lemma}[section]%
\newtheorem{proposition}{Proposition}[section]
\theoremstyle{definition}
\theoremstyle{remark}
\newtheorem{remark}{Remark}[section]
\newtheorem{example}{Example}[section]
\newcommand{\B}{\mathcal{B} }
\newcommand{\bC}{{\mathbb C}}
\newcommand{\bZ}{{\mathbb Z}}
\newcommand{\bN}{{\mathbb N}}
\newcommand{\End}{\operatorname{End}}
\begin{document}

\author{Natasha Rozhkovskaya}
\address{Department of Mathematics, Kansas State University, Manhattan, KS 66502, USA}
\email{rozhkovs@math.ksu.edu}

\keywords{}         %
\thanks{}
\subjclass[2010]{Primary 17B69, Secondary  35Q51, 20G43, 05E05. }

\begin{abstract}
We study the effect of linear transformations on quantum fields with applications to vertex operator presentations 
of symmetric functions. Properties of linearly transformed quantum fields and corresponding transformations of  Hall-Littlewood polynomials are described, including preservation of commutation relations,  stability, explicit  combinatorial formulas and generating functions. We prove that specializations of linearly transformed Hall-Littlewood polynomials
describe all polynomial tau functions of the KP and the BKP hierarchy. Examples of linear transformations are related  to multiparameter symmetric functions, Grothendieck polynomials, deformations by cyclotomic polynomials, and  some other
variations of Schur symmetric functions that exist in the literature. 
\end{abstract}
\title {Linear transformations of vertex  operator  presentations of Hall-Littlewood Polynomials}

\maketitle

\section{Introduction}

 The language of quantum fields is a widely used  tool in representation theory of infinite-dimensional algebraic structures. 
 Actions of algebras of fermions, Heisenberg algebra, Virasoro algebra, $gl_\infty$, affine Lie algebras,  boson-fermion correspondence, vertex algebras and their representations are   some  examples  that use    this language. Presentation of    families of symmetric functions as  results of  application of quantum fields to a vacuum vector allowed researchers to prove many important results with applications in integrable systems and representation theory. A classical example  is  the vertex operator  presentation of   Schur functions by the action of   charged free fermions  \cite{Jing3, Zel}, which is the base of the construction of the boson-fermion correspondence.  From the existing  numerous   vertex operator   presentations  of  other families of  symmetric functions   we mention  through the paper  the ones that are most related  to our construction.

In this note we study  the effect of a   linear transformation of  quantum fields  of vertex operators on  the properties  of the resulting symmetric functions.  Surprisingly, this simple modification covers a broad class of Schur-like symmetric functions that appear in the literature.  At the same time, the simplicity of this transformation allows one to get    many important  properties of these  transformed families of symmetric functions  ``almost for free''  from the properties of the  original  classical  family.

The initial motivation for  this project was  the  study of  tau-functions of the KP and the BKP hierarchy \cite{DJKM4, DJKM1, DJKM3, DJKM2, JM,Sato} by  generalizing the  methods of 
  \cite{NRQ},  where  the author proved that the  multiparameter Schur $Q$-functions  are tau-functions of the BKP hierarchy. We aimed to provide a description of  all polynomial tau-functions of the KP and the  BKP hierarchies  unifying the ideas of \cite{NRQ} and  \cite{KLR}.  This is done  in Section   \ref{Sec-6}. Along the way it was convenient to consider a  more general set up applying   linear transformations to   vertex operators  of Hall-Littlewood polynomials that were first constructed in \cite{Jing2}. This allowed us  not only to formulate and prove in a uniform way  statements  for the KP and  the BKP tau-functions, but to prove a number of properties of linearly transformed  Hall-Littlewood polynomials, such as combinatorial formulas, generating functions, stability property, preservation of commutation relations of  quantum fields under linear transformations. Moreover, these  properties carry on to particular specializations that provide deformations of Hall-Littlewood polynomials, Schur  and   Schur $Q$-functions that     appear in the literature.

  The paper is organized as follows.  In Section \ref{Sec1} we review the necessary facts on symmetric functions, quantum fields,  formal distributions. In Section
 \ref{S-change} we introduce  linear transformations of quantum fields  and describe their basic properties.  In Section \ref{Sec-HL}  we review  Hall-Littlewood polynomials and apply  linear transformations to their vertex operator presentations to define a new family of symmetric functions that depend on a parameter $t$. The properties of these new symmetric functions are discussed. In  Section \ref{Sec-5}   we formulate the properties of  specializations at $t=0$  and $t=-1$.  In  Section \ref{Sec-6} we   describe  all polynomial tau-functions of the KP and the BKP hierarchies as results of   linear transformations of vertex operator presentations of Schur  functions and Schur Q-functions. In  Section \ref{Sec-Ex}  we discuss   particular examples of linear transformations  matching  with the existing  literature, that include  multiparameter symmetric functions, Grothendieck polynomials, deformations by cyclotomic polynomials.
  
 \subsection*{Acknowledgments}
 The author is grateful to the hospitality of the Institut des Hautes \'Etudes Scientifiques  and to  the International Laboratory of Cluster Geometry at the HSE University.  
 
\section{Symmetric functions and quantum fields}\label{Sec1}
\subsection{Symmetric functions}\label{Sec-sym1}
 We review properties  of symmetric functions  following \cite{Md, Stan}. The   setup  is similar to   \cite{JR-genA, KLR, NC, NRQ}.
Consider the  algebra of formal power series $\bC[[{\bf x}]]=\bC[[x_1,x_2,\dots]]$. 
Let $\lambda=(\lambda_1\ge \dots\ge \lambda_l> 0)$ be a partition  of length $l$.
The {\it monomial symmetric function} 
is  a formal series
\[
m_\lambda=\sum_{(i_1,\dots,i_l)\in \mathbb N^l} x^{\lambda_1}_{i_1}\dots x^{\lambda_l}_{i_l}.
\]
Let  $\Lambda$ be the  subalgebra  of $\bC[[{\bf x}]]$  spanned as a vector space  by all  {monomial symmetric functions}. It is  called the  {\it algebra of symmetric functions}.  Note that elements  of $\Lambda$ are invariant with respect to any permutation of  a finite number of  indeterminates 
 $x_1, x_2, \, \dots$. The following  families of symmetric functions play important role in our study. 

For a partition $\lambda=(\lambda_1\ge \dots\ge \lambda_l> 0)$,  {\it Schur symmetric   function} $s_\lambda$ is defined as 
\begin{align}\label{defshur}
s_\lambda(x_1,x_2,\dots ) =\sum_{T} {\bf x}^{T},
\end{align}
where the sum is over all semistandard tableaux of shape $\lambda$.

 {\it Complete symmetric functions}  $h_k= s_{(k)}$ are given by the formula
\[
h_k(x_1,x_2\dots)=\sum_{1\le i_1\le \dots \le i_k<\infty} x_{i_1}\dots x_{i_k},  \quad k\in \bN,
\]
while {\it elementary symmetric functions} $e_k= s_{(1^k)}$  by
 \[e_k(x_1,x_2\dots)=\sum_{1\le i_1< \dots < i_k<\infty} x_{i_1}\dots x_{i_k}, \quad k\in \bN.
\]
 {\it Power sums} $p_k$  are symmetric functions defined by 
\[
p_k(x_1,x_2,\dots)=\sum_{i\in \bN} x_i^k, \quad k\in \bN.
\]
It is  convenient  to set 
$h_{-k}(x_1,x_2\dots)=e_{-k}(x_1,x_2\dots)=p_{-k}(x_1,x_2\dots)=0
$
 for 
$ k\in \bN$  and $h_0=e_0=p_0=1$.

Algebra   $\Lambda$  is  a polynomial algebra in any of these three families of generators: 
\[
 \Lambda=\bC[h_1, h_2,\dots]=\bC[e_1, e_2,\dots]=\bC[p_1,p_2,\dots].
\]
Schur symmetric functions $\{s_\lambda\}$ labeled by all partitions  form a  linear basis of $\Lambda$.
They can be also expressed through complete symmetric functions  by the {\it Jacobi\,-\,Trudi identity}
\begin{align}\label{JT}
s_\lambda=\det[h_{\lambda_i-i+j}]_{1\le i,j\le l}
\end{align}
We will use (\ref{JT}) as the extension of the  definition of $s_\lambda$ for any integer vector $\lambda\in \bZ^l$.

There is a natural  scalar product on   $\Lambda$  where  the set of Schur symmetric functions $\{s_\lambda\}$ labeled by partitions $\lambda$ form an orthonormal basis,
$
 	<s_\lambda,s_\mu >=\delta_{\lambda, \mu}.
$
Then for any  linear operator  acting on the  vector space $\Lambda$ one can define the corresponding adjoint operator. In particular,  any symmetric function  $f\in \Lambda $  defines an operator of multiplication
$
f:g\mapsto fg$  for any $g\in \Lambda$.
The corresponding   adjoint operator $f^\perp$  is defined by the standard rule
$<f^\perp g_1, g_2>=<g_1,fg_2>$ for all   $g_1,g_2\in \Lambda$.

It is known  \cite{Md},  I.5  Example 3, that
$
p_n^\perp= n\frac{\partial}{\partial p_n}.
$
Since any element  $f\in \Lambda$  can be expressed as a polynomial  function of power sums 
\begin{align*}
f= F(p_1,p_2, p_3, \dots),
\end{align*}
the corresponding adjoint operator $f^\perp$ is a polynomial differential operator with constant coefficients
\begin{align*}
f^\perp= F(\partial/\partial p_1,2\partial/\partial p_2, 3\partial/\partial p_3,\dots).
\end{align*}
In particular, $e_k$ and $h_k$ are  homogeneous polynomials of degree $k$ in  $(p_1,p_2, p_3,\dots )$, so
 the adjoint operators $e^\perp_k$ and $h^\perp_k$ are  homogeneous polynomials of degree $k$ in $(\partial/\partial p_1,2\partial/\partial p_2, \dots)$, which implies
  the following statement. 
\begin{lemma}\label{eh_dp}
For any symmetric function $f\in \Lambda$   there exists  a positive  integer   $N= N(f)$, such that
 \[e^\perp_l(f)=0\quad \text{and}\quad h^\perp_l(f)=0\quad \text{ for all\quad   $l\ge N$}.
 \] 
 \end{lemma}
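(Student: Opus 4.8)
The plan is to run a pure degree-counting argument, using the grading of $\Lambda$ by total degree in which each power sum $p_i$ is homogeneous of degree $i$. First I would fix $f\in\Lambda$ and write it, exactly as in the discussion preceding the lemma, as a polynomial $f=F(p_1,\dots,p_m)$ in finitely many power sums; let $d$ denote the largest weighted degree of a monomial occurring in $F$, where the variable $p_i$ is assigned weight $i$. Thus $f$ is a finite sum of homogeneous symmetric functions of degrees at most $d$.

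Next I would record the key fact that $e^\perp_l$ and $h^\perp_l$ each lower the total degree by exactly $l$. This is immediate from the grading: multiplication by the degree-$l$ function $e_l$ (respectively $h_l$) raises degree by $l$, and since the scalar product on $\Lambda$ pairs homogeneous components of distinct degrees trivially, the adjoint operation turns a degree-raising-by-$l$ operator into a degree-lowering-by-$l$ operator. Equivalently, one reads this off the explicit formula quoted above: $e^\perp_l$ and $h^\perp_l$ are homogeneous of degree $l$ in the symbols $(\partial/\partial p_1, 2\partial/\partial p_2, \dots)$, so each of their monomials is a product $\prod_i (\partial/\partial p_i)^{a_i}$ with $\sum_i i\,a_i = l$.

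Then I would conclude by setting $N=d+1$. For $l\ge N$ the operator $e^\perp_l$ sends $f$, whose homogeneous pieces have degree at most $d<l$, to a symmetric function of degree at most $d-l<0$; since $\Lambda$ has no nonzero elements of negative degree, $e^\perp_l(f)=0$, and likewise $h^\perp_l(f)=0$. Concretely, a monomial operator $\prod_i (\partial/\partial p_i)^{a_i}$ with $\sum_i i\,a_i = l$ annihilates any monomial $\prod_i p_i^{b_i}$ with $\sum_i i\,b_i \le d$, because $a_i\le b_i$ for all $i$ cannot hold once $l>d$.

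There is essentially no serious obstacle here: the statement is just a boundedness-of-degree phenomenon, resting on the fact that an arbitrary $f\in\Lambda$ is a \emph{polynomial}, hence of bounded weighted degree, in the power sums. The only point that merits a moment's care is the claim that $e^\perp_l$ and $h^\perp_l$ strictly decrease the grading by $l$, which I would justify either abstractly from the degree-homogeneity of the scalar product or concretely from the differential-operator formula; the resulting bound $N$ is explicit, being one more than the top degree of $f$.
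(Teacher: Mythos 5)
Your proof is correct and follows essentially the same route as the paper: the paper's justification is exactly that $e^\perp_l$ and $h^\perp_l$ are homogeneous of weighted degree $l$ in $(\partial/\partial p_1, 2\partial/\partial p_2,\dots)$ while $f$ is a polynomial of bounded weighted degree in the power sums, which is the degree-counting argument you spell out. Your proposal merely makes the final implication explicit (the bound $N=d+1$ and the monomial-annihilation check), which the paper leaves to the reader.
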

 
 \subsection{Formal distributions and quantum fields } \label{Sec_distributions}
 For more details see \cite{Kac-begin, Kac-bomb}.
Let  $W$ be a vector space. A  {\it $W$-valued formal distribution} is a biletaral series 
in the  indeterminate $u$ with coefficients  in $W$:
\[
a(u)=\sum_{n\in \bZ}a_n u^n,\quad a_n\in W.
\]
We  denote as $W[[u,u^{-1}]]$ the vector space of  all $W$-valued formal distributions.
We also  use the notation  $W[u]$ for the space  of polynomials,  $W[[u]]$ for the space of power series, $W[u, u^{-1}]$ for the space of Laurent polynomials, and $W((u))$   for the space of formal Laurent series.

A special case of a formal distribution  is a {\it quantum field}, which is an $\End\, W$-valued formal distribution  $\Gamma(u)=\sum_{k\in \bZ}\Gamma_k u^{-k}$, such that
 for any $f\in W$,  $\Gamma_{k}(f)=0$ for  $k>>0$.

A formal distribution in two and more  indeterminates is defined similarly. The {\it formal delta-function}  $\delta(u,v)$ is the $\bC$-valued  formal 
distribution  in  variables  $u$ and $v$ 
\begin{align}\label{delta}
\delta(u,v)= \sum_{\substack{i,j\in \bZ\\ i+j=-1 }} {u^i}{v^{j}} = i_{u,v}\left(\frac{1}{u-v}
\right) -i_{v,u} \left(\frac{1}{u-v}
\right),
\end{align}
where 
$i_{u,v}$ (resp. $i_{v,u}$) denotes  the expansion  of a rational function of $u,v$  in the  domain  $|u|>|v|$ (resp. $|u|>|v|$), 
\begin{align}\label{u-v}
i_{u,v}\left(\frac{1}{u-v}\right)=\sum_{k=0}^{\infty}\frac{v^k}{u^{k+1}}.
\end{align}

\subsection{Generating series  of polynomial  differential operators acting  on $\Lambda$} \label{secGKP}

Denote by $\mathcal D$  the algebra  of differential operators  acting on $\Lambda=\bC[p_1,p_2,\dots]$, which consists of finite sums  
\[
\sum_{i_1,\dots, i_m} F_{i_1, \dots i_m}(p_1, p_2,\dots) {\partial _{p_1}^{i_1}}\dots  {\partial _{p_m}^{i_m}},
\] 
where coefficients  $F_{i_1, \dots i_m}(p_1, p_2,\dots)$ are  polynomials in $(p_1, p_2,\dots)$.
Then operators of multiplication $p_n, h_n, e_n$, their adjoints $p_n^\perp, h_n^\perp, e_n^\perp$ along with  their products  are elements of   $\mathcal D$.

Consider the generating series  of  complete and  elementary symmetric functions
\begin{align}\label{HE}
H(u)=\sum_{k\in \bZ_{\ge 0}}  {h_k}{u^k}=\prod_{i\in \bN} \frac{1}{1-x_iu},\quad \quad 
E(u)=\sum_{k\in \bZ_{\ge 0}}{ e_k}{ u^k}=\prod_{i\in \bN} {(1+x_iu)},
\end{align}
which are elements of  $ \Lambda[[u]]$. We will use the same notation for  the generating series of the corresponding multiplication operators
$H(u),E(u)\in \mathcal D[[u]]$. Similarly, we define $E^\perp(u), H^\perp(u)\in \mathcal D[[u^{-1}]]$ as
\begin{align}\label{perpHE}
  E^\perp(u)= \sum_{k\in \bZ_{\ge 0}}\frac {e^\perp_k} {u^k},\quad H^\perp(u)= \sum_{k\in \bZ_{\ge 0}} \frac{h^\perp_k} {u^k}.
\end{align}

The following properties of these   generating series with  coefficients in $\mathcal D$ are well known  (e.g. \cite {Md}, I.5). 
\begin{proposition}\label{prop_rel} We have in  $\mathcal D[[u]]$ (resp. in  $\mathcal D[[ u^{-1}]]$\,) 
\begin{align*}
H(u) E(-u)=1,
\quad 
H^\perp(u) E^{\perp}(-u)=1,
\end{align*}
\begin{align*}
H(u)&= exp\left(\sum_{n\in \bN} \frac{p_n}{n}{u^n}\right),\quad 
E(u)= exp\left(-\sum_{n\in \bN} \frac{(-1)^{n} p_n}{n}{u^n}\right),
\end{align*}
\begin{align}\label{DHEP}
  E^\perp(u)= exp\left(-\sum_{k\in \bN} {(-1)^k} \frac{\partial}{\partial p_k} \frac {1}{u^k}\right),
\quad     H^\perp(u)= exp\left( \sum_{k\in \bN}\frac{\partial}{\partial p_k} \frac{1}{u^k}\right).
\end{align}
\end{proposition}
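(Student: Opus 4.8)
The plan is to prove the four exponential identities first and then read off the two multiplicative relations and the two adjoint formulas from them. First I would establish the generating-function formulas for the multiplication operators $H(u)$ and $E(u)$ by taking logarithms of the infinite products in \eqref{HE}. Expanding $-\log(1-x_iu)=\sum_{n\ge1}(x_iu)^n/n$ and $\log(1+x_iu)=\sum_{n\ge1}(-1)^{n-1}(x_iu)^n/n$, summing over $i\in\bN$, and collecting the coefficient of $u^n$ as $p_n=\sum_i x_i^n$, gives
\begin{align*}
\log H(u)=\sum_{n\in\bN}\frac{p_n}{n}\,u^n,\qquad \log E(u)=-\sum_{n\in\bN}\frac{(-1)^{n}p_n}{n}\,u^n,
\end{align*}
which are exactly the two asserted formulas after exponentiating. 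These rearrangements are legitimate in $\Lambda[[u]]$ because the coefficient of each fixed power $u^n$ receives contributions from only finitely many terms.

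The relation $H(u)E(-u)=1$ then follows immediately: either directly from the product formulas, since $\prod_i(1-x_iu)^{-1}\prod_i(1-x_iu)=1$, or by adding exponents, since $E(-u)=\exp\bigl(-\sum_n p_n u^n/n\bigr)$ cancels $\log H(u)$. For the adjoint series I would use the description recorded just before the statement: the map $f\mapsto f^\perp$ sends a symmetric function $F(p_1,p_2,\dots)$ to the constant-coefficient differential operator $F(\partial/\partial p_1,\,2\,\partial/\partial p_2,\,3\,\partial/\partial p_3,\dots)$; that is, it is the algebra homomorphism $\Lambda\to\mathcal D$ determined by $p_n\mapsto n\,\partial/\partial p_n$. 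It is indeed an algebra homomorphism because the multiplication operators $p_n$, and hence their adjoints, pairwise commute, so that $(fg)^\perp=g^\perp f^\perp=f^\perp g^\perp$. Applying this homomorphism coefficientwise to the generating series $H(w)=\sum_k h_k w^k$ and $E(w)=\sum_k e_k w^k$ and then substituting $w=u^{-1}$ turns the two exponential formulas into
\begin{align*}
H^\perp(u)=\exp\Bigl(\sum_{n\in\bN}\frac{\partial}{\partial p_n}\,\frac{1}{u^n}\Bigr),\qquad E^\perp(u)=\exp\Bigl(-\sum_{n\in\bN}(-1)^{n}\frac{\partial}{\partial p_n}\,\frac{1}{u^n}\Bigr),
\end{align*}
which are the formulas in \eqref{DHEP}; the remaining identity $H^\perp(u)E^\perp(-u)=1$ is simply the image of $H(w)E(-w)=1$ under the same homomorphism.

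The only point needing care—the step I would treat as the main, if modest, obstacle—is justifying that the perp-homomorphism may be applied termwise to the exponential series, not merely to finite products. This is controlled by the grading: the coefficient of $u^{-k}$ in each exponential is a finite sum, indexed by partitions of $k$, of products of the operators $\partial/\partial p_n$, hence a well-defined finite-order element of $\mathcal D$. Thus both sides are honest elements of $\mathcal D[[u^{-1}]]$, and the homomorphism, being degree-preserving, matches them coefficient by coefficient, so that $\exp$ commutes with the substitution $p_n\mapsto n\,\partial/\partial p_n$ in the sense required. \lemref{eh_dp} is the companion statement ensuring these operators in addition act locally finitely on elements of $\Lambda$, so that the series define genuine quantum fields.
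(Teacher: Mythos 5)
Your proof is correct. The paper in fact gives no proof of this proposition at all --- it is stated as well known with a citation to \cite{Md}, I.5 --- and your argument (taking logarithms of the infinite products to get the exponential formulas for $H(u)$ and $E(u)$, then applying the algebra homomorphism $p_n \mapsto n\,\partial/\partial p_n$ coefficientwise to obtain \eqnref{DHEP} and $H^\perp(u)E^\perp(-u)=1$ as the image of $H(u)E(-u)=1$) is precisely the standard derivation that citation points to. One minor imprecision: the coefficient of $u^n$ in $\log H(u)$ is not a sum of finitely many terms but the infinite sum $\sum_i x_i^n/n = p_n/n$; the rearrangement is legitimate because each monomial in the $x$-variables is contributed only finitely often, and this does not affect the rest of your argument.
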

\begin{lemma} \label{propHE}  (\cite {Md}, I.5 Example 29). We have  the following commutation relations in   $\mathcal D[[u^{-1}, v]]$:

  \begin{align*}
\left(
1-\frac{v}{u}
\right)E^\perp(u)E(v)= E(v)E^\perp(u),\\
\left(
1-\frac{v}{u}
\right)H^\perp(u)H(v)= H(v)H^\perp(u),\\
H^\perp(u)E(v)= \left(
1+\frac{v}{u}
\right)E(v)H^\perp(u),\\
E^\perp(u)H(v)= \left(
1+\frac{v}{u}
\right)H(v)E^\perp(u).
\end{align*}
\end{lemma}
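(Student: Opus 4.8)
The plan is to reduce all four identities to a single Baker--Campbell--Hausdorff computation, exploiting the exponential presentations of \propref{prop_rel}. Write $H(v)=\exp(A)$ and $H^\perp(u)=\exp(B)$ with
\[
A=\sum_{n\in\bN}\frac{p_n}{n}\,v^n,\qquad B=\sum_{k\in\bN}\frac{\partial}{\partial p_k}\,\frac{1}{u^k}.
\]
The first observation is structural: $A$ is a sum of mutually commuting multiplication operators, while $B$ is a sum of mutually commuting constant-coefficient differential operators. Hence the sole source of noncommutativity is the Heisenberg relation $[\partial/\partial p_k,\,p_n]=\delta_{kn}$ on $\Lambda=\bC[p_1,p_2,\dots]$, and it shows at once that the commutator $[B,A]$ is a scalar.

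Computing that scalar is the heart of the matter. Using $[\partial/\partial p_k,\,p_n]=\delta_{kn}$,
\[
[B,A]=\sum_{n,k\in\bN}\frac{1}{n}\,v^n u^{-k}\,[\partial/\partial p_k,\,p_n]
=\sum_{n\in\bN}\frac{1}{n}\Big(\frac{v}{u}\Big)^n
=-\log\Big(1-\frac{v}{u}\Big),
\]
the last equality being the standard logarithmic expansion, valid in $\bC[[u^{-1},v]]$. Since $[B,A]$ is central, the Baker--Campbell--Hausdorff identity collapses to $e^{B}e^{A}=e^{[B,A]}e^{A}e^{B}$, and exponentiating the scalar gives
\[
H^\perp(u)H(v)=\Big(1-\frac{v}{u}\Big)^{-1}H(v)H^\perp(u),
\]
which is the second asserted relation after clearing the denominator.

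The remaining three identities follow the same template, only the scalar $[B,A]$ changing. For $E^\perp(u)E(v)$ the extra signs $(-1)^k$ and $(-1)^n$ from the exponents in \propref{prop_rel} cancel pairwise along the diagonal $k=n$, reproducing $-\log(1-v/u)$ and hence the same prefactor. For the mixed products $H^\perp(u)E(v)$ and $E^\perp(u)H(v)$ exactly one sign survives, turning the sum into $\sum_{n\in\bN}(-1)^{n+1}n^{-1}(v/u)^n=\log(1+v/u)$, whose exponential is the factor $1+v/u$. I expect the only genuine care to be bookkeeping: one must check that each exponent lies in the appropriate completion so that the products are well defined in $\mathcal D[[u^{-1},v]]$, and that all higher brackets in the Baker--Campbell--Hausdorff expansion vanish once the relevant commutator is seen to be a scalar. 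Granting this, the four relations are four instances of $e^{B}e^{A}=e^{[B,A]}e^{A}e^{B}$.
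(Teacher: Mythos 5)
Your proof is correct: the commutator computations, the sign bookkeeping in all four cases, and the reduction to the central-commutator form of Baker--Campbell--Hausdorff ($e^Be^A=e^{[B,A]}e^Ae^B$ when $[B,A]$ is central) all check out, and the formal-series caveats you flag are harmless, since each coefficient of $u^{-k}v^n$ in $e^Be^A$ involves only finitely many terms, and $e^{-\log(1-v/u)}=(1-v/u)^{-1}$ is a legitimate element of $\bC[[u^{-1},v]]$ because the argument of the exponential has no constant term. Note, however, that the paper does not prove \lemref{propHE} at all: it cites Macdonald (I.5, Example 29), where the intended argument is different, namely it runs through the Hopf-algebraic (coproduct) property of skewing operators, $h_n^\perp(fg)=\sum_{i+j=n}h_i^\perp(f)\,h_j^\perp(g)$, combined with the elementary evaluation $h_i^\perp(h_m)=h_{m-i}$, which yields, e.g., $H^\perp(u)\bigl(H(v)\,g\bigr)=\bigl(1-\tfrac{v}{u}\bigr)^{-1}H(v)\,H^\perp(u)(g)$ directly. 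Your Heisenberg/BCH route is the one standard in the vertex-operator literature and fits naturally with the exponential presentations the paper records in \propref{prop_rel}, so it is arguably better adapted to this paper's setup; the coproduct route, on the other hand, avoids exponentials entirely and needs no discussion of completions or of which BCH terms vanish. Either way, your argument is a complete and self-contained proof of the lemma.
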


\subsection{Schur symmetric $Q$-functions }\label{Sec-sQ}
The elements $\{q_k(x_1,x_2,\dots)\}_{k\in \bZ}$ are the  coefficients of the expansion of $Q(u)\in  \Lambda[[u]]$, where
 \begin{align}\label{shurq}
Q(u) =\sum_{k\in \bZ} q_k u^k= E(u) H(u).
\end{align}
Note that $q_k=\sum_{i=0}^k e_ih_{k-i}$ for $k>0$, $q_0=1$, and $q_k=0$ for $q<0$.
For $a,b\in \bZ_{\ge 0}$  let
\begin{align}\label{qrs}
q_{a,b}=q_aq_b+2\sum_{i\in \bZ} (-1)^i q_{a+i} q_{b-i}.
\end{align}
Then,   \cite{Md},  III.8,
\begin{align}\label{qab_qba}
q_{a,b}=-
q_{b,a}, \quad q_{a,a}=0.
\end{align}

\begin{proposition}\label{prop_rel2}  (\cite{Md},  III.8) We have in  $\mathcal D[[u]]$ (resp. in  $\mathcal D[[ u^{-1}]]$\,) 
  \begin{align*}
Q(u) =S_{odd}(u)^2, \quad\text{where}\quad   
S_{odd}(u)=exp\left(\sum_{n\in \bN_{odd}}\frac{p_{n}}{n}{u^{n}}\right),
\end{align*}
\begin{align*}
S_{odd}^{\perp}(u)=exp\left(\sum_{n\in \bN_{odd}} \frac{\partial}{\partial p_n}\frac{1}{u^{n}}\right).
\end{align*}
Here $\bN_{odd}=\{1,3,5,\dots\}$.
\end{proposition}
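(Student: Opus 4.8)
The plan is to read off both identities from the exponential presentations in \propref{prop_rel}, together with the rule $p_n^\perp=n\,\partial/\partial p_n$ recalled in \secref{Sec-sym1}; no new machinery is required.

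First I would establish $Q(u)=S_{odd}(u)^2$. By the definition \eqref{shurq}, $Q(u)=E(u)H(u)$ is a product of two operators of multiplication by symmetric functions, and these commute because $\Lambda$ is commutative. Consequently the product of the exponential expressions for $E(u)$ and $H(u)$ given in \propref{prop_rel} is the exponential of the sum of their exponents, and I would compute
\begin{align*}
Q(u)=\exp\left(\sum_{n\in\bN}\frac{p_n}{n}\bigl(1-(-1)^n\bigr)u^n\right).
\end{align*}
The factor $1-(-1)^n$ vanishes when $n$ is even and equals $2$ when $n$ is odd, so the exponent reduces to $2\sum_{n\in\bN_{odd}}\frac{p_n}{n}u^n$; recognizing this as twice the exponent defining $S_{odd}(u)$ gives $Q(u)=S_{odd}(u)^2$.

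Next I would treat the adjoint, where $S_{odd}^\perp(u)$ is understood, by analogy with \eqref{perpHE}, as the series whose coefficient of $u^{-k}$ is the adjoint of the coefficient of $u^k$ in $S_{odd}(u)$. Each coefficient of $S_{odd}(u)$ lies in $\Lambda$ and involves only finitely many power sums, so the adjoint rule $f^\perp=F(\partial/\partial p_1,2\partial/\partial p_2,\dots)$ of \secref{Sec-sym1} applies coefficientwise and amounts to the substitution $p_n\mapsto p_n^\perp=n\,\partial/\partial p_n$. Carrying this through $S_{odd}(u)=\exp(\sum_{n\in\bN_{odd}}\frac{p_n}{n}u^n)$ turns each $\frac{p_n}{n}$ into $\frac{\partial}{\partial p_n}$, and the power flip $u^n\mapsto u^{-n}$ built into the perp series then yields
\begin{align*}
S_{odd}^\perp(u)=\exp\left(\sum_{n\in\bN_{odd}}\frac{\partial}{\partial p_n}\frac{1}{u^n}\right).
\end{align*}

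The step needing the most care is precisely this last one: one must keep the notational power flip $u\mapsto u^{-1}$ carried by the symbol $\Gamma^\perp(u)$ distinct from the operator adjoint acting on the coefficients, so that signs and exponents land correctly. As an independent check I would apply $\cdot^\perp$ to the first identity: since the adjoint reverses products, $(EH)^\perp=H^\perp E^\perp$, whence $S_{odd}^\perp(u)^2=H^\perp(u)E^\perp(u)$; inserting the exponential formulas \eqref{DHEP} and again using that $1-(-1)^k$ isolates the odd indices recovers exactly the square of the displayed exponential, confirming the result.
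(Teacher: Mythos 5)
Your proof is correct. The paper gives no argument of its own for this statement --- it simply cites \cite{Md}, III.8 --- and your derivation, multiplying the exponential formulas for $E(u)$ and $H(u)$ from \propref{prop_rel} (the factor $1-(-1)^n$ isolating the odd power sums) and then applying the coefficientwise adjoint rule $p_n\mapsto n\,\partial/\partial p_n$, is precisely the standard computation behind that reference, so it matches the intended proof in both structure and substance.
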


Recall that the {\it  Pfaffian}  of a skew-symmetric  matrix $M=[M_{ij}]$ of size $2l\times 2l$ is defined as
$$
\mathrm{Pf}\,[M]=\sum_{\sigma\in S'_{2l}}  sgn(\sigma) M_{\sigma(1) \sigma(2)}  \cdots M_{\sigma(2l-1) \sigma(2l)},
$$
where $S'_{2l}$  is the subset of the permutation group $S_{2l}$ that consists of $\sigma\in S_{2l}$  such that $\sigma(2k-1)<\sigma(2k)$ for $1\le k\le l$ and
$\sigma(2k-1)<\sigma(2k+1)$ for $1\le k\le l-1$. 

If $\lambda=(\lambda_1,\dots, \lambda_{2m})$ is a strict partition, i.e. $\lambda_1>\dots> \lambda_{2m}\ge 0$, then the matrix 
$M_\lambda= (q_{\lambda_i,\lambda_j})$ is skew-symmetric by (\ref{qab_qba}), and the {\it Schur symmetric  $Q$-function} $q_\lambda$  is defined as
\begin{align}\label{defq}
q_\lambda(x_1, x_2, \dots)=\text{Pf} \,M_\lambda.
\end{align}


\subsection{Charged free fermions}\label{Sec_ferminons}

 Let  $z$ and $u$ be   formal indeterminats. 
Consider the  {\it boson Fock space} $\B= \bC[ z, z^{-1}]\otimes \Lambda$, where $\Lambda$ is the ring of symmetric functions.
A number of important algebraic structures act on the space $\B$. We review the action of charged free fermions  and refer to   \cite{ JM, Kac-begin, Kac-bomb}
for more details. 

Let $R(u)$ and  its inverse  be  formal distributions in variable $u$  of operators  acting on the  elements of the  form  $z^m f$,  where $f \in \Lambda$,  $m\in \bZ$, by the rule
\[
R(u) (z^mf)={ z^{m+1}}{u}^{m+1} f,
\quad 
R^{-1}(u) (z^mf)={ z}^{m-1} u^{-m}f.
\]
Define  formal distributions $\psi^\pm(u)$  of operators  acting on the  space $\B$  through the action of  $R^{\pm 1}(u)$ and the $\mathcal D$-valued generating series (\ref{HE}), (\ref{perpHE}):
\begin{align*}
\psi^+(u)&=u^{-1}R(u)H(u) E^{\perp}(-u),\\
\psi^-(u) &=R^{-1}(u)E(-u) H^{\perp}(u),
\end{align*}
or, in  other words, for any $m\in \bZ$ and any $f\in \Lambda$,
\begin{align*}
\psi^+(u)(z^mf)&=z^{m+1}u^{m}H(u) E^{\perp}(-u)(f),\\
\psi^-(u)(z^mf)&=z^{m-1}{u^{-m}}E(-u) H^{\perp}(u)(f).
\end{align*}
Let the operators   $\{\psi^{\pm}_{i}\}_{i\in \bZ+1/2}$ be the coefficients of the expansions
 \[
\psi ^\pm(u)= \sum_{i\in \bZ+1/2}\psi^{\pm}_i u^{-i-1/2}.
\] 
These operators are called the {\it  charged free fermions}. Formal distributions $\psi^\pm(u)$ of  operators acting on  the space  $\B$ are  quantum fields that 
satisfy relations 
\begin{align*}
\psi^\pm(u)\psi^\pm(v)+ \psi^\pm(v)\psi^\pm(u)&=0,
\\
\psi^+(u)\psi^-(v)+ \psi^-(v)\psi^+(u)&=\delta(u,v),
\end{align*}
or, equivalently,   
\begin{align}\label{ckl}
\psi_k^\pm\psi_l^\pm +\psi_l^\pm\psi_k^\pm=0,\quad 
\psi_k^+\psi_l^- +\psi_l^-\psi_k^+=\delta_{k, -l}, \quad k,l\in \bZ+1/2.
\end{align}

\subsection{Heisenberg algebra}  \label{Sec_Hies}

The Heisenberg algebra  is the complex Lie algebra with  a basis $\{\alpha_k\}_{ k\in \bZ}\cup\{ 1\}$ and commutation relations
\begin{align}
[1,\alpha_n]=0, \quad [\alpha_m, \alpha_n]=m\delta_{m,-n} =1, \quad m,n\in \bZ. \label{hei_rel}
\end{align}
This is equivalent to 
\[
[\alpha(z), \alpha(w)]=\partial_w \delta(z,w)\cdot 1, 
\]
where $\alpha(z)=\sum_{n\in\bZ}\alpha_n z^{-n-1}$.
The Heisenberg algebra acts on the space $\B$
by differentiation and multiplication operators
\begin{align*}
\alpha_k=\partial/\partial p_k, \quad \alpha_{-k}= kp_k, (k=1,2,\dots),\quad  \alpha_0=a_0\cdot 1,\quad 1=1.
\end{align*}

\subsection{Neutral fermions} \label{Sec_neutral}
Consider the  boson Fock space  generated by odd power sums:
  \[\B_{odd}= \bC[  p_1, p_3, p_5,\dots].
   \]
    Recall   \cite{Md} III.8 (8.3) that   $q_k\in \B_{odd}$,  and  that $\B_{odd}= \bC[q_1,q_3,\dots ]$.
  From (\ref{DHEP}) it is clear that  $\B_{odd}$  is invariant with respect  to action of  $e^\perp_k$,  and $ h^\perp_k$, and 
one can   prove 
that restrictions to  ${\B_{odd}}$ of the operators
$
 E^{\perp}(u)$,  $H^{\perp}(u)$, $S_{odd}^{\perp}(u)$
coincide.

Define a quantum field $\varphi(u)$ of operators acting on $\B_{odd}$:
 \begin{align}\label{defphi}
&\varphi(u)=  E(u)H(u) E^{\perp}(-u)=Q(u) S_{odd}^{\perp}(-u).
\end{align}
Let  $\{\varphi_i\}_{i\in \bZ}$ be
coefficients  of the expansion $
\varphi(u)=\sum_{j\in \bZ}\varphi_j u^{-j}$.

One has relations
\[
 \varphi(u)\varphi(v) + \varphi(v)\varphi(u) =2v\delta(v,-u),
 \]
where 
$\delta(u,v)$ is  the formal delta function. 
Hence (\ref{defphi}) is the action of  the Clifford algebra of neutral fermions on the space $\B_{odd}$:
 \begin{align}\label{neut1}
\varphi_m \varphi_n+\varphi_n \varphi_m= 2(-1)^m \delta_{m+n,0}\quad \text {for}\quad m, n\in \bZ.
\end{align}


\section{Linear transformations of quantum fields}\label{S-change}
\subsection{Linear transformations of quantum fields}
Let  $\Gamma(u)=\sum_{i\in \bZ} \Gamma_i u^{-i}$ be   a quantum field of operators $\{\Gamma_i\}_{i\in\bZ}$ acting on a  vector space $W$. Fix $f\in W$ and set
\begin{align}\label{Fu}
\Gamma (u_1)\dots \Gamma (u_l) \,  (f)=  \mathcal F(u_1,\dots, u_l) .
\end{align}
 Note that  (\ref{Fu})  is a well-defined  formal distribution with coefficients in $W$:
 \begin{align}\label{Fexpand}
\mathcal F(u_1,\dots, u_l)=\sum_{\lambda\in \bZ^l} F_\lambda\, u_1^{\lambda_{1}} \dots u_{l}^{\lambda_{l}},\quad F_\lambda=\Gamma_{-\lambda_1}\dots \Gamma_{-\lambda_l}\,(f) \in W.
 \end{align}

Let $A=(A_{ij})_{i,j\in \bZ}$ be an  infinite  complex-valued  matrix. 
Set formally
\begin{align*}
\tilde \Gamma_{i}=\sum_{i\in \bZ}A_{i,j} \Gamma_{j}. \quad 
\end{align*}

For any integer vector $\lambda \in \bZ^l$ consider a formal infinite sum
 \begin{align*}\label{defn1}
 \tilde F_{\lambda}&=\sum_{\alpha \in \bZ^l} A_{-\lambda_1,-\alpha_1}\dots A_{-\lambda_l,
 -\alpha_l}   F_{\alpha},
  \end{align*}
  where $\{F_{\alpha}\}\subset W$ are coefficients of the expansion  (\ref{Fexpand}).
  
  We say that $B=(B_{ij})_{i,j\in\bZ}$ is the (left) {\it  inverse }of the infinite matrix $A$  and write $B=A^{-1}$
if 
$
\sum_{k\in\bZ} B_{ik} A_{kj}=\delta_{i,j}.
$
 \begin{theorem}\label{matrixA}
 
   Assume that  for any fixed $i\in \bZ$, $A_{i,j}=0$ for $j<<0$:
\[
\begin{array}{r|ccccccccc}
\downarrow i & j \rightarrow &\dots &-2 &-1 &0 &1&2 &3&\dots \\
  \hline
\vdots&    \dots &\dots &\dots &\dots &\dots &\dots &\dots &\dots &\dots \\
\vdots&  \dots &0& * & *  & *& *& * & * &\dots \\
-3&   \dots &0 &0 & 0  & *  & *  &* & *  &\dots \\
-2&   \dots &0 &0&* & *  & * & *  & *&\dots \\
-1&   \dots &0&0&* & * &* &*& * & \dots\\
0&  \dots &0 &0&0  &*&*&*&* &\dots \\
1&  \dots &0 &0  &0  &0&*&*&*&\dots\\
2&   \dots &0 & 0   &0&0&*&*&*&\dots \\
3&   \dots &0 & 0   &0 &*&*& *&*&\dots \\
\vdots&    \dots &\dots&\dots &\dots &\dots  &\dots &\dots &\dots&\dots 
\end{array}
\]

   For each $i\in \bZ$, let  $A_{i,M(i)}$ be  the first non-zero term in the  $i$-th row of the matrix $A$, reading from left to right:
 \begin{align}\label{Ms}
M(i)=\max \{k\in \bZ |   A_{i,r}= 0\quad \text{for all}\quad   r<k\}. 
\end{align}    
  Then 
\begin{enumerate}[label=(\alph*)]
\item 
$\tilde \Gamma_i$ is a well-defined linear operator acting on the space $W$ for any $i\in \bZ$.

\item 
  $ \tilde F_{\lambda}$   is a well-defined  finite linear combination of $F_\alpha$'s, i.e.  $ \tilde F_{\lambda}\in W$, 
and 
   \begin{align*}\tilde F_{\lambda}= \tilde\Gamma_{-\lambda_1}\dots \tilde\Gamma_{-\lambda_l}(f). 
    \end{align*}
\item   If $A$ is invertible, then 
   \begin{align*}
F_{\lambda}&=\sum_{\alpha \in \bZ^l} (A^{-1})_{-\lambda_1, -\alpha_1}\dots (A^{-1})_{-\lambda_l, -\alpha_l} \tilde F_{\alpha}.
 \end{align*}
 \item \label{reexp1}
If $A$ is invertible,  then formal distribution  $F(u_1,\dots, u_l)$    can be re-expanded:
   \begin{align*}
\mathcal F(u_1,\dots, u_l)= \sum_{\lambda\in \bZ^l}\tilde F_\lambda \, g_{\lambda_1}(u_1)\dots g_{\lambda_l}(u_l),
 \end{align*}
where 
$
g_k(u)= \sum_{s\in \bZ} (A^{-1})_{-s,-k} u^{s}
$
 are formal  complex-valued distributions.

\item 

 If $M(i)$ is a strictly increasing function of $i$, then $\tilde \Gamma(u)=\sum_{i\in \bZ} \tilde\Gamma_i u^{-i}$  is a quantum field. 
 In that case $\tilde F_{(i,\lambda)}=0$ for any $\lambda\in \bZ^l$ and $i<<0$.
 \[
\begin{array}{r|ccccccccc}
&  &&&&&& & &\\
  \hline
&    \dots &\dots &\dots &\dots &\dots &\dots &\dots &\dots &\dots \\
&  \dots &*& *&* & *& *& * & * &\dots \\
&  \dots &*& * &* & *& * & *  &* &\dots \\
&   \dots &0&* & * & *  & * &* & *  &\dots \\
&   \dots &0&0&0& *  & * & *  & * &\dots \\
&   \dots &0&0&0 & 0 &0 &*& * & \dots\\
&  \dots &0 &0&0 &0&0&0&* &\dots \\
&  \dots &0  &0  &0  &0&0&0&0&\dots\\
&    \dots &\dots&\dots&\dots&\dots  &\dots &\dots &\dots&
\end{array}
\]

\end{enumerate}
\end{theorem}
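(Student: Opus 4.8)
The plan is to prove the parts in order, with (b) as the technical engine from which (c)--(e) follow by inverse manipulations. Part (a) is immediate: for any $g\in W$ the defining series $\tilde\Gamma_i(g)=\sum_{j\in\bZ}A_{i,j}\Gamma_j(g)$ has only finitely many nonzero summands, since the row-support hypothesis on $A$ kills the terms with $j\ll0$ while the quantum-field property of $\Gamma$ kills those with $j\gg0$; the two truncations pinch $j$ into a finite window, so $\tilde\Gamma_i$ is a well-defined linear operator on all of $W$.

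The heart of the theorem is the finiteness assertion in (b). The sum $\tilde F_\lambda=\sum_{\alpha\in\bZ^l}\big(\prod_i A_{-\lambda_i,-\alpha_i}\big)F_\alpha$ with $F_\alpha=\Gamma_{-\alpha_1}\cdots\Gamma_{-\alpha_l}(f)$ is a priori an infinite unordered sum, and I would prove it finite by trapping each coordinate between an upper and a lower bound. The definition (\ref{Ms}) of $M$ gives $A_{-\lambda_i,-\alpha_i}=0$ unless $-\alpha_i\ge M(-\lambda_i)$, an \emph{upper} bound $\alpha_i\le-M(-\lambda_i)$; the matching \emph{lower} bounds come from the quantum-field truncation, but they must be harvested from right to left, since the bound on $\alpha_i$ depends on the vector produced by the operators to its right. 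Concretely, $\Gamma_{-\alpha_l}(f)=0$ unless $-\alpha_l\le N(f)$, confining $\alpha_l$ to a finite set; for each admissible value $\Gamma_{-\alpha_l}(f)$ is a fixed vector, so the quantum-field property bounds $\alpha_{l-1}$ from below, and a downward induction on the coordinate index—taking at each stage the maximum of the finitely many truncation bounds already produced—traps every $\alpha_i$ in a finite set. Thus $\tilde F_\lambda\in W$. Expanding each $\tilde\Gamma_{-\lambda_i}=\sum_j A_{-\lambda_i,j}\Gamma_j$ and reindexing $j=-\alpha_i$ then identifies this finite sum with $\tilde\Gamma_{-\lambda_1}\cdots\tilde\Gamma_{-\lambda_l}(f)$, giving the operator identity.

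Parts (c) and (d) are the formal inverse of (b). Substituting $\tilde F_\alpha=\sum_\beta\big(\prod_i A_{-\alpha_i,-\beta_i}\big)F_\beta$ into $\sum_\alpha\big(\prod_i B_{-\lambda_i,-\alpha_i}\big)\tilde F_\alpha$ with $B=A^{-1}$, interchanging the $\alpha$- and $\beta$-summations, and collapsing each coordinatewise inner sum through the defining relation $\sum_k B_{ik}A_{kj}=\delta_{i,j}$ into $\delta_{\lambda_i,\beta_i}$, one is left with $F_\lambda$; this is (c). For (d) I would feed (c) into $g_k(u)=\sum_s(A^{-1})_{-s,-k}u^s$ and read off the coefficient of $u_1^{s_1}\cdots u_l^{s_l}$ in $\sum_\lambda\tilde F_\lambda g_{\lambda_1}(u_1)\cdots g_{\lambda_l}(u_l)$, which is precisely $F_s$, recovering $\mathcal F(u_1,\dots,u_l)$. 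For (e), strict monotonicity forces $M(i)\to+\infty$, so for fixed $g\in W$ the window $M(i)\le j\le N(g)$ in $\tilde\Gamma_i(g)=\sum_{M(i)\le j\le N(g)}A_{i,j}\Gamma_j(g)$ is empty for all large $i$; thus $\tilde\Gamma(u)$ is a quantum field, and writing $\tilde F_{(i,\lambda)}=\tilde\Gamma_{-i}(g)$ with $g=\tilde\Gamma_{-\lambda_1}\cdots\tilde\Gamma_{-\lambda_l}(f)\in W$ gives $\tilde F_{(i,\lambda)}=0$ for $i\ll0$ by the truncation property of $\tilde\Gamma$.

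The main obstacle is the finiteness mechanism underlying (b): the quantum-field lower bounds are not uniform but depend on the coordinates fixed to their right, so a single global estimate fails and the right-to-left induction is essential. Once this is in place, the same finiteness is what justifies the interchange of the a priori infinite summations in (c) and (d)—the step where I would be most careful—and the remaining arguments are formal.
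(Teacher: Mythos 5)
Your proposal is correct and follows essentially the same route as the paper: part (a) via the two-sided truncation (row support of $A$ from below, quantum-field property from above), (c) by the formal inverse computation with $\sum_k (A^{-1})_{ik}A_{kj}=\delta_{i,j}$, (d) by substitution, and (e) from monotonicity of $M(i)$ together with $\tilde F_{(i,\lambda)}=\tilde\Gamma_{-i}(\tilde F_\lambda)$. Your right-to-left induction in (b) is simply a careful unpacking of the paper's terse ``follows from (a)'' — i.e., of the fact that $\tilde F_\lambda$ is the successive application of the well-defined operators $\tilde\Gamma_{-\lambda_l},\dots,\tilde\Gamma_{-\lambda_1}$ to $f$ — so no new idea is involved.
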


\begin{proof}
 \begin{enumerate}[label=(\alph*)]
 \item  Since 
 $\Gamma(u)$ is a field,  for any $f\in W$ there exists an integer $N(f)$ such that $\Gamma_{j}(f)=0$ for all $j>N(f)$. 
For any fixed $i\in \bZ$    $A_{i,j}=0$ for $j<M(i)$. Then 
\begin{align}\label{eq_p}
\tilde \Gamma_{i}(f)=\sum_{j\in \bZ}A_{i,j} \Gamma_{j} (f)= \sum_{M(i)\le j\le N(f)}A_{i,j} \Gamma_{j} (f)
\end{align}
 is  a well-defined finite sum  of  elements in  $ W$.

\item  This follows from (a).

\item 
One has
\begin{align*}
&\sum_{\alpha\in \bZ^l} (A^{-1})_{-\lambda_1, -\alpha_1}\dots (A^{-1})_{-\lambda_l, -\alpha_l} \tilde F_{\alpha}\\
&=
\sum_{\alpha, \beta \in \bZ^l} (A^{-1})_{-\lambda_1,-\alpha_1}\dots (A^{-1})_{-\lambda_l,- \alpha_l} A_{-\alpha_1, -\beta_1}\dots A_{-\alpha_l, -\beta_l}\  F_{\beta}=
\sum_{\beta\in\bZ^l} \delta_{\lambda,\beta} F_\beta= F_\lambda.
\end{align*}
\item From (c),
\begin{align*}
\mathcal F(u_1,\dots, u_l)&
=
\sum_{\lambda\in \bZ^l} \sum_{\alpha \in \bZ^l}  (A^{-1})_{-\lambda_1,- \alpha_1}\dots (A^{-1})_{-\lambda_l,-\alpha_l} \tilde F_{\alpha}  u_1^{\lambda_{1}} \dots u_{l}^{\lambda_{l}}\\
&=
\sum_{\alpha\in \bZ^l}\tilde F_\alpha g_{\alpha_1}(u_1)\dots g_{\alpha_l}(u_l).
 \end{align*}

\item 
If $M(i)$ is a strictly increasing function of $i$, then for $i>>0$,   $M(i)>N(f)$  and  all the terms in  (\ref{eq_p}) vanish. Then  
$\tilde \Gamma_{i}(f)=0$ for $i>>0$, so  $\tilde \Gamma(u)$ is a quantum field.
For the second statement observe that $\tilde F_{(i,\lambda)}= \tilde \Gamma_{-i} (\tilde F_\lambda)$.
 \end{enumerate}
\end{proof}

\subsection{Re-expansion of formal delta-function} \label{S-fg}
For  a complex-valued  infinite matrix
 $A=(A_{ij})_{i,j\in \bZ}$ consider the  collection of  formal complex-valued  distributions
\begin{align*}
f_k(x)= \sum_{s\in \bZ} A_{-k,-s} x^s,\quad k\in \bZ.
\end{align*}
If $A$ is invertible, we also introduce  formal complex-valued  distributions
\begin{align*}
g_k(x)= \sum_{s\in\bZ} A^{-1}_{-s,-k} x^{s}, \quad k\in \bZ.
\end{align*}
\begin{lemma}\label{L-delta}
\begin{enumerate}[label=(\alph*)]
\item Assume that $A$ is invertible. Then we have the equality of formal  complex-valued  distributions
\[
\sum_{k\in \bZ} g_k(x^{-1})\,  f_k(y)= x\delta(x,y). 
\]
where $\delta(x,y)$ is  the formal delta function (\ref{delta}).
\item 
If $A$ and $A^{-1}$ are block matrices of the form 

\begin{align}\label{block}
A=
\left[
\begin{array}{c|c|c}
\raisebox{-15pt}{{\Large\mbox{{$A^-$}}}} &\vdots &\raisebox{-15pt}{{\Large\mbox{{$0$}}}} \\
&0&\\
\hline
\dots 0&  1 & 0\dots \\
 \hline
 \raisebox{-15pt}{{\Large\mbox{{$0$}}}} &0 &\raisebox{-15pt}{{\Large\mbox{{$A^+$}}}} \\
 &\vdots&
 \end{array}
\right],
\quad 
\quad A^{-1}=
\left[
\begin{array}{c|c|c}
\raisebox{-15pt}{{\Large\mbox{{$(A^-)^{-1}$}}}} &\vdots &\raisebox{-15pt}{{\Large\mbox{{$0$}}}} \\
&0&\\
\hline
\dots 0&  1 & 0\dots \\
 \hline
 \raisebox{-15pt}{{\Large\mbox{{$0$}}}} &0 &\raisebox{-15pt}{{\Large\mbox{{$(A^+)^{-1}$}}}} \\
 &\vdots&
 \end{array}
\right],
\end{align}
 with $A^-= (A_{i,j})_{i,j<0}$, $A^+= (A_{i,j})_{i,j\ge0}$,  and $A_{0,0}=1$,
then 
\begin{align*}
\sum_{k\ge 0}g_k(x^{-1})\,  f_k(y)=\sum_{k\ge 0}\frac{y^{k}}{x^{k}}=i_{x,y}\left(\frac{x}{x-y}\right),\\ \quad
\sum_{k<0 } g_k(x^{-1})\,  f_k(y)=\sum_{k\ge 0}\frac{x^k}{y^{k}}=i_{y,x}\left(\frac{x}{y-x}\right), \quad
\end{align*}
where $i_{x,y}\left(\frac{1}{x-y}\right)$ is the expansion of a rational function (\ref{u-v}).
\end{enumerate}
\end{lemma}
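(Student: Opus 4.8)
The plan is to prove both statements by substituting the defining expansions of $f_k$ and $g_k$, collecting the coefficient of each monomial $x^{-s}y^{t}$, and reducing everything to the entrywise identity $A^{-1}A=I$.

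For part (a), I would expand
$$\sum_{k\in\bZ}g_k(x^{-1})f_k(y)=\sum_{k\in\bZ}\Bigl(\sum_{s\in\bZ}A^{-1}_{-s,-k}x^{-s}\Bigr)\Bigl(\sum_{t\in\bZ}A_{-k,-t}y^{t}\Bigr)$$
and read off the coefficient of $x^{-s}y^{t}$ as $\sum_{k\in\bZ}A^{-1}_{-s,-k}A_{-k,-t}$. After the substitution $m=-k$ this becomes $\sum_{m\in\bZ}A^{-1}_{-s,m}A_{m,-t}=(A^{-1}A)_{-s,-t}=\delta_{s,t}$, directly from the definition of the (left) inverse. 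Hence the double sum collapses to $\sum_{s\in\bZ}x^{-s}y^{s}$, and comparing with $x\delta(x,y)=\sum_{i+j=-1}x^{i+1}y^{j}=\sum_{s\in\bZ}x^{-s}y^{s}$ from \eqref{delta} gives the claim. The only delicate point is that collecting the coefficient of a fixed monomial is legitimate, which is exactly the assertion that the product $A^{-1}A$ is defined entrywise — this is built into the definition of the inverse.

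For part (b), the extra ingredient is the block-diagonal form \eqref{block} of both $A$ and $A^{-1}$, which decouples the index ranges $s<0$, $s=0$ and $s>0$. The key observation is that for $s\ge 0$ the entry $A^{-1}_{-s,m}$ vanishes unless $m\le 0$, so the partial sum over $k\ge 0$ (i.e.\ $m=-k\le 0$) already reproduces the full entry $(A^{-1}A)_{-s,-t}=\delta_{s,t}$; dually, for $s<0$ the entry $A^{-1}_{-s,m}$ vanishes unless $m>0$, so the partial sum over $k<0$ reproduces $\delta_{s,t}$. Consequently the splitting $\sum_{k\in\bZ}=\sum_{k\ge 0}+\sum_{k<0}$ matches exactly the splitting of $\sum_{s\in\bZ}x^{-s}y^{s}$ according to the sign of $s$, giving $\sum_{k\ge 0}g_k(x^{-1})f_k(y)=\sum_{s\ge 0}(y/x)^{s}$ and $\sum_{k<0}g_k(x^{-1})f_k(y)=\sum_{s<0}(y/x)^{s}=\sum_{j\ge 1}(x/y)^{j}$. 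Finally I would identify these geometric series with the two domain expansions of the rational function $x/(x-y)$ from \eqref{u-v}, namely $\sum_{s\ge 0}(y/x)^{s}=i_{x,y}\bigl(x/(x-y)\bigr)$ and $\sum_{j\ge 1}(x/y)^{j}=i_{y,x}\bigl(x/(y-x)\bigr)$.

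The step I expect to be the main obstacle is the bookkeeping in part (b): one must check carefully that the block structure forces $A^{-1}_{-s,m}$ (and $A_{m,-t}$) to vanish for $m$ on the ``wrong'' side of $0$, that the central index $0$ — whose row and column are the standard basis vector — produces no cross terms between the three blocks, and that the surviving block products genuinely return the restricted identities $(A^-)^{-1}A^-=I$ and $(A^+)^{-1}A^+=I$. Once this sign-of-$m$ analysis is in place, the remainder is the routine recognition of geometric series as the expansions \eqref{u-v}.
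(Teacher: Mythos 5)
Your proof is correct and follows essentially the same route as the paper: expand the product, apply the entrywise left-inverse identity $\sum_{k}(A^{-1})_{-r,-k}A_{-k,-p}=\delta_{r,p}$, and recognize the resulting series; for part (b) the paper only remarks that the identities ``follow from similar computations,'' and your sign-of-index analysis of the block structure is exactly that computation made explicit. One point in your favor: your conclusion $\sum_{k<0}g_k(x^{-1})f_k(y)=\sum_{j\ge 1}(x/y)^{j}$ is the correct series (consistent with the two halves summing to $x\delta(x,y)$ and with $i_{y,x}\left(\frac{x}{y-x}\right)$), so the paper's middle expression $\sum_{k\ge 0}x^{k}/y^{k}$ in the second identity of (b) contains an off-by-one misprint that your computation implicitly corrects.
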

\begin{proof}
 We check  the first statement:
  \[
\sum_{k\in \bZ} g_k(x^{-1}) f_k(y)= \sum_{k,r,p\in \bZ} (A^{-1})_{-r, -k}  A_{-k,-p} x^{-r}y^{p}= \sum_{r,p\in \bZ} \delta_{p,r} x^{-r}y^{p} 
=x \delta(x,y).
\]
Other identities follow from similar computations.
\end{proof}
 Examples of identities of type  Lemma \ref{L-delta} can be found  in Section \ref{Sec-Ex}.
 
 \begin{remark} If $A$ is a two-block matrix as in Lemma  \ref{L-delta} (b), then 
  $f_{k}(x)$ is  a power sum  in $x$ for $k>0$,   and    in $1/x$ for $k<0$. If, in addition  for any fixed $i\in \bZ$, $A_{i,j}=0$ for $j<<0$, then 
 for $k> 0$  $f_{k}(x)$ is  a  polynomial.
\end{remark}
 
We define matrix $ A^\vee=  (A^\vee)_{ij\in\bZ} $ by
  \begin{align*}
  (A^\vee)_{ij}= A_{-j, -i}.
  \end{align*}
Next statement is obvious.
\begin{lemma}\label{Lem-sym}
Assume that $A$ is invertible and $A^{-1}= A^\vee$. Then
$g_k(x)=  f_{-k} (x^{-1}).
$
\end{lemma}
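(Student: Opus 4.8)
The plan is to prove the identity by directly unwinding the three definitions involved—those of $f_k$, $g_k$, and $A^\vee$—and substituting the hypothesis $A^{-1}=A^\vee$. Since every object in sight is a bilateral formal distribution, it suffices to match the coefficient of each power $x^s$, so the whole argument reduces to a careful bookkeeping of index signs rather than to any analytic or structural difficulty.

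First I would rewrite $g_k(x)$ using the hypothesis. By definition $g_k(x)=\sum_{s\in\bZ}(A^{-1})_{-s,-k}\,x^s$. The assumption $A^{-1}=A^\vee$ together with the defining relation $(A^\vee)_{ij}=A_{-j,-i}$ gives, upon setting $i=-s$ and $j=-k$, the equality $(A^{-1})_{-s,-k}=A_{k,s}$. Hence $g_k(x)=\sum_{s\in\bZ}A_{k,s}\,x^s$.

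Next I would compute the right-hand side. From the definition one has $f_{-k}(x)=\sum_{s\in\bZ}A_{k,-s}\,x^s$, so replacing $x$ by $x^{-1}$ yields $f_{-k}(x^{-1})=\sum_{s\in\bZ}A_{k,-s}\,x^{-s}$; re-indexing the sum by $s\mapsto -s$ turns this into $\sum_{s\in\bZ}A_{k,s}\,x^s$, which is exactly the expression just obtained for $g_k(x)$. Comparing coefficients of $x^s$ then gives $g_k(x)=f_{-k}(x^{-1})$.

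I expect no genuine obstacle here: the content is a single change of summation variable combined with the defining symmetry $A^{-1}=A^\vee$, which is precisely why the paper labels the statement obvious. The only point requiring attention is the consistent tracking of the sign reversals in the subscripts of $A$ and $A^{-1}$; once the coefficient of $x^s$ in both $g_k(x)$ and $f_{-k}(x^{-1})$ is identified as $A_{k,s}$, the equality of formal distributions follows at once.
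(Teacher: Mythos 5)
Your proof is correct: unwinding the definitions gives $(A^{-1})_{-s,-k}=(A^\vee)_{-s,-k}=A_{k,s}$, and re-indexing $f_{-k}(x^{-1})$ shows both sides equal $\sum_{s\in\bZ}A_{k,s}x^s$. This is exactly the direct coefficient check the paper has in mind when it declares the lemma obvious and omits the proof, so there is nothing to add.
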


\subsection{ Commutation relations of  transformed vertex operators}\label{Sec-com}
 In some cases  we can formulate conditions when  the linearly  transformed  quantum fields keep  the commutation relations of the  original quantum fields. In this section we state these conditions for commonly used  algebraic structures of charged free fermions, neutral fermions and the Heisenberg algebra. 

\begin{proposition}
\label{tildeferm}
Let $\{\psi^{\pm}_{r-1/2}\}_{r\in \bZ}$ be  charged free fermions  satisfying relations (\ref{ckl}).  
 Let  $A=(A_{ij})_{i,j\in \bZ}$ and $B=(B_{ij})_{i,j\in \bZ}$   be two matrices, such that  for any fixed $i$, $A_{i,j}=0$ and $B_{i,j}=0$ for $j<<0$.
Set 
\[
\tilde \psi^{+}_{k-1/2}=\sum_{i\in \bZ}A_{k,i}\psi^+_{i-1/2},
\quad 
\tilde \psi^{-}_{k-1/2}=\sum_{i\in \bZ}B_{k,i}\psi^-_{i-1/2}.
\quad 
\]
Then
 $\{\tilde \psi^{\pm}_{r-1/2}\}_{r\in \bZ}$ satisfy the anti-commutation relations of charged free fermions if and only if
 $(A^{-1})_{i,j}= B_{1-j, 1-i}$. 
\end{proposition}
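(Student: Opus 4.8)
The plan is to translate the desired anti-commutation relations directly into conditions on the matrices $A$ and $B$, using the bilinearity of the defining sums and the known relations (\ref{ckl}) for the untransformed fermions. Concretely, I would compute each of the three bracket expressions
$\tilde\psi^+_{k-1/2}\tilde\psi^+_{l-1/2}+\tilde\psi^+_{l-1/2}\tilde\psi^+_{k-1/2}$,
$\tilde\psi^-_{k-1/2}\tilde\psi^-_{l-1/2}+\tilde\psi^-_{l-1/2}\tilde\psi^-_{k-1/2}$, and
$\tilde\psi^+_{k-1/2}\tilde\psi^-_{l-1/2}+\tilde\psi^-_{l-1/2}\tilde\psi^+_{k-1/2}$
by substituting the definitions and pushing the double sums through. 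The first two vanish automatically: since $\psi^+_{i-1/2}\psi^+_{j-1/2}+\psi^+_{j-1/2}\psi^+_{i-1/2}=0$ for all $i,j$, and similarly for the minus fields, the transformed expressions are sums of $A_{k,i}A_{l,j}$ (resp.\ $B_{k,i}B_{l,j}$) times something that is already zero, so the ``same-charge'' relations hold for \emph{any} choice of $A$ and $B$. This means the entire content of the statement is carried by the mixed $+/-$ relation.

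Next I would expand the mixed anticommutator using $\psi^+_{i-1/2}\psi^-_{j-1/2}+\psi^-_{j-1/2}\psi^+_{i-1/2}=\delta_{(i-1/2),-(j-1/2)}=\delta_{i+j,1}$, which is the second relation in (\ref{ckl}) rewritten for the shifted indices. This gives
\[
\tilde\psi^+_{k-1/2}\tilde\psi^-_{l-1/2}+\tilde\psi^-_{l-1/2}\tilde\psi^+_{k-1/2}
=\sum_{i,j\in\bZ}A_{k,i}B_{l,j}\,\delta_{i+j,1}
=\sum_{i\in\bZ}A_{k,i}B_{l,1-i}.
\]
The finiteness of this sum for fixed $k,l$ is guaranteed by the hypothesis that $A_{k,i}=0$ and $B_{l,j}=0$ for $i,j\ll 0$: one index is bounded below by the $A$-condition and then $j=1-i$ is bounded above, while the $B$-condition cuts off the tail, so only finitely many terms survive. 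The required relation for the transformed fermions is that this equal $\delta_{(k-1/2),-(l-1/2)}=\delta_{k+l,1}=\delta_{k,1-l}$.

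The heart of the argument is therefore the equivalence
\[
\sum_{i\in\bZ}A_{k,i}B_{l,1-i}=\delta_{k,1-l}\quad\text{for all }k,l
\qquad\Longleftrightarrow\qquad
(A^{-1})_{i,j}=B_{1-j,1-i}.
\]
To see this I would set $l'=1-l$ and reindex the sum as $\sum_i A_{k,i}B_{l,1-i}$, then substitute the candidate identity $B_{1-j,1-i}=(A^{-1})_{i,j}$, i.e.\ $B_{l,1-i}=(A^{-1})_{i,1-l}$; the left-hand side becomes $\sum_i A_{k,i}(A^{-1})_{i,1-l}=\delta_{k,1-l}$ by the definition of the (left) inverse stated just before \thmref{matrixA}, confirming the ``if'' direction. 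For ``only if'', the relation $\sum_i A_{k,i}B_{l,1-i}=\delta_{k,1-l}$ says precisely that the matrix with entries $B_{l,1-i}=\hat B_{i,l}$ (indexed by $i$ and $l$) is a right inverse of $A$ after the reflection $l\mapsto 1-l$; matching indices carefully yields $\hat B=A^{-1}$, which is the claimed formula. I expect the main obstacle to be purely bookkeeping: keeping the half-integer-to-integer shift $r\mapsto r-1/2$, the reflection $i\mapsto 1-i$, and the transpose built into $A^{-1}$ all consistent, and verifying that the formal sums converge so that associativity manipulations are legitimate rather than merely formal. Once the index conventions are pinned down, the equivalence is a one-line consequence of the definition of the inverse matrix.
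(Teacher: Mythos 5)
Your proposal is correct and follows essentially the same route as the paper's proof: the same-charge relations hold automatically, the mixed anticommutator is expanded bilinearly via (\ref{ckl}) to give $\sum_{i}A_{k,i}B_{l,1-i}$, and requiring this to equal $\delta_{k+l,1}$ is reindexed into the matrix identity $(A^{-1})_{i,j}=B_{1-j,1-i}$. Your added remarks on finiteness of the sums and the explicit check of both directions of the equivalence are consistent with (and slightly more detailed than) the paper's argument.
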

\begin{proof}
Relations  $[\tilde \psi^\pm_{k-1/2},\tilde \psi^\pm_{m-1/2} ]_+=0$, for  $k,m\in \bZ$,  are immediate. One has
 \begin{align*}
 [\tilde \psi^+_{k-1/2},\tilde \psi^-_{m-1/2} ]_+&=\sum_{i,j\in\bZ} A_{k,i}B_{m,j} [ \psi^+_{i-1/2}, \psi^-_{j-1/2} ]_+
=
 \sum_{i,j\in\bZ} A_{k,i}B_{m,j}\delta_{i+j,1}= \sum_{i\in \bZ} A_{k,i}B_{m,1-i}.
 \end{align*}
 Then $ [\tilde \psi^+_{k-1/2},\tilde \psi^-_{m-1/2} ]_+=\delta_{k+m,1}$ if and only if
$
\sum_{i\in \bZ} A_{k,i} B_{1-m, 1-i}=\delta _{k,m}.
$
\end{proof}
\begin{proposition} 
Let $\{\varphi_k\}$ be neutral fermions,   satisfying relations  (\ref{neut1}).
Let  $A=(A_{ij})_{i,j\in \bZ}$  with  the  property that for any fixed $i$, $A_{i,j}=0$  for $j<<0$. Set
 \begin{align*}
\tilde \varphi _{k}=\sum_{i\in \bZ}A_{k,i}\varphi_{i}.
 \end{align*}
Then 
$\{\tilde \varphi_{i}\}_{i\in \bZ}$ satisfy the anticommutation relations of neutral  fermions if and only if
 $(A^{-1})_{i,j}= (-1)^{i-j}A_{-j,-i}$.
 \end{proposition}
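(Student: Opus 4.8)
The plan is to follow the template of the proof of Proposition~\ref{tildeferm} for charged fermions: expand the anticommutator of the transformed operators by bilinearity, substitute the neutral fermion relations (\ref{neut1}), and read off the resulting condition on $A$. First I would write
\begin{align*}
[\tilde\varphi_k,\tilde\varphi_m]_+ = \sum_{i,j\in\bZ} A_{k,i}A_{m,j}\,[\varphi_i,\varphi_j]_+ = \sum_{i,j\in\bZ} A_{k,i}A_{m,j}\,2(-1)^i\delta_{i+j,0} = 2\sum_{i\in\bZ}(-1)^i A_{k,i}A_{m,-i}.
\end{align*}
Before proceeding I would check that this sum is well-defined: by hypothesis $A_{k,i}=0$ for $i\ll 0$, so $i$ is bounded below, while $A_{m,-i}=0$ for $-i\ll 0$ forces $i$ bounded above; hence only finitely many terms survive (the same finiteness used in Theorem~\ref{matrixA}(a)). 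Comparing with the target relation $[\tilde\varphi_k,\tilde\varphi_m]_+=2(-1)^k\delta_{k+m,0}$, the operators $\{\tilde\varphi_k\}$ obey (\ref{neut1}) exactly when
\begin{align*}
(\star)\qquad \sum_{i\in\bZ}(-1)^i A_{k,i}A_{m,-i}=(-1)^k\delta_{m,-k}\qquad\text{for all }k,m\in\bZ.
\end{align*}

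The second step is to recognize $(\star)$ as a matrix-inverse identity. I would set $B_{i,j}=(-1)^{i-j}A_{-j,-i}$, the matrix proposed as $A^{-1}$, and compute the product
\begin{align*}
(AB)_{k,m}=\sum_{i\in\bZ}A_{k,i}B_{i,m}=(-1)^{-m}\sum_{i\in\bZ}(-1)^i A_{k,i}A_{-m,-i}.
\end{align*}
Applying $(\star)$ with its second index replaced by $-m$ gives $\sum_i(-1)^iA_{k,i}A_{-m,-i}=(-1)^k\delta_{m,k}$, whence $(AB)_{k,m}=(-1)^{k-m}\delta_{k,m}=\delta_{k,m}$. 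Thus $(\star)$ implies $AB=I$; running the substitution $m\mapsto -m$ in reverse yields the converse, so $(\star)$ is \emph{equivalent} to $AB=I$, that is, to $(A^{-1})_{i,j}=(-1)^{i-j}A_{-j,-i}$, as claimed. Since $\varphi$ is a single species, (\ref{neut1}) is the only family of relations to impose, and $(\star)$ already encodes it in full.

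The computation carries no deep obstacle; the only point requiring genuine care is the bookkeeping of the sign $(-1)^i$ under the reflection $i\mapsto -i$ and its interaction with the reflected transpose $A_{i,j}\mapsto A_{-j,-i}$ built into the claimed formula for $A^{-1}$. In particular one must verify that the residual sign $(-1)^{k-m}$ appearing in $(AB)_{k,m}$ collapses to $1$ precisely on the diagonal $k=m$, which is exactly what makes the reflected-and-signed transpose a genuine inverse rather than an inverse up to signs. As in Proposition~\ref{tildeferm}, identifying the single product identity $AB=I$ with ``$B=A^{-1}$'' tacitly uses that $A$ is two-sided invertible, so that this right inverse coincides with the left inverse in the sense defined before Theorem~\ref{matrixA}.
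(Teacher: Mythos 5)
Your proof is correct and follows essentially the same route as the paper's: expand $[\tilde\varphi_k,\tilde\varphi_m]_+$ bilinearly, insert the relations (\ref{neut1}), and identify the resulting condition $\sum_i(-1)^{m-i}A_{k,i}A_{-m,-i}=\delta_{k,m}$ with the stated formula for $A^{-1}$. The only difference is that you spell out what the paper leaves implicit — the finiteness of the sums, the explicit check that the condition is the matrix identity $AB=I$ with $B_{i,j}=(-1)^{i-j}A_{-j,-i}$, and the tacit left/right inverse identification — which is careful but not a different argument.
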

\begin{proof}
\begin{align*}
[\tilde \varphi_{k},\tilde \varphi_{m} ]_+=\sum_{i,j\in \bZ} A_{k,i}A_{m,j} [ \varphi_{i}, \varphi_{j} ]_+
=
 2\sum_{i,j\in\bZ} A_{k,i}A_{m,j} (-1)^j\delta_{i+j,0}=  2\sum_{i\in \bZ} A_{k,i}(-1)^i A_{m,- i}.
 \end{align*}
 Then  $2 (-1)^m\delta_{m+k,0}=[\tilde \varphi_{k},\tilde \varphi_{m} ]_+ $
 is equivalent to  
$
  \sum_{i\in \bZ} A_{ki}(-1)^{m-i} A_{-m, -i}=\delta_{k,m}.
  $
\end{proof}
 \begin{proposition} 
Let $\{\alpha_k\}$ be generators of  Heisenberg algebra,   satisfying relations  (\ref{hei_rel}).
Let  $A=(A_{ij})_{i,j\in \bZ}$  with  the  property that for any fixed $i$, $A_{i,j}=0$  for $j<<0$. Set
\[
\tilde \alpha _{k}=\sum_{i\in \bZ}A_{k,i}\alpha_{i}.
\] 
Then 
$\{\tilde \alpha_{i}\}_{i\in \bZ}$ satisfy the  relations of type  (\ref{hei_rel})  if and only if
 $APA^T= P$,  where  $P=(P_{i,j})_{i,j\in\bZ}$ is a matrix with  entries $P_{i,j}= i\delta_{i,-j}$, and $A^T_{ij}= A_{j,i}$. 
 \end{proposition}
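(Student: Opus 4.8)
\emph{Proof proposal.} The plan is to follow exactly the pattern of the two preceding propositions: reduce the claimed matrix identity to a single bilinear computation of the bracket $[\tilde\alpha_k,\tilde\alpha_m]$, expanded through the known relations \eqref{hei_rel} of the untransformed generators, and then recognize the resulting expression as an entry of $APA^T$. Before doing so I would record that each $\tilde\alpha_k=\sum_{i\in\bZ}A_{k,i}\alpha_i$ is a well-defined operator on $\B$: since $\alpha_i=\partial/\partial p_i$ lowers degree for $i>0$, one has $\alpha_i(f)=0$ for $i\gg0$ on any $f$, while $A_{k,i}=0$ for $i\ll0$ by hypothesis, so $\tilde\alpha_k(f)$ is a finite sum by the same argument as in Theorem \ref{matrixA}(a). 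Moreover the central element $1$ commutes with every $\alpha_i$ and hence with every $\tilde\alpha_k$, so the relation $[1,\tilde\alpha_n]=0$ holds automatically and only the bracket among the $\tilde\alpha$'s requires analysis.

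The core step is the computation
\begin{align*}
[\tilde\alpha_k,\tilde\alpha_m]
=\sum_{i,j\in\bZ}A_{k,i}A_{m,j}\,[\alpha_i,\alpha_j]
=\sum_{i,j\in\bZ}A_{k,i}A_{m,j}\,i\,\delta_{i,-j}
=\sum_{i\in\bZ}i\,A_{k,i}A_{m,-i},
\end{align*}
where the Kronecker delta coming from \eqref{hei_rel} collapses the double sum by forcing $j=-i$. On the other side, unwinding the definitions of $P$ and of the transpose $A^T_{ij}=A_{j,i}$ gives
\begin{align*}
(APA^T)_{k,m}=\sum_{i,j\in\bZ}A_{k,i}\,(i\,\delta_{i,-j})\,A_{m,j}=\sum_{i\in\bZ}i\,A_{k,i}A_{m,-i},
\end{align*}
so that $[\tilde\alpha_k,\tilde\alpha_m]=(APA^T)_{k,m}$ for every pair $(k,m)\in\bZ^2$. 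The Heisenberg-type relations to be imposed read $[\tilde\alpha_k,\tilde\alpha_m]=k\,\delta_{k,-m}=P_{k,m}$, so they hold for all $k,m$ if and only if $(APA^T)_{k,m}=P_{k,m}$ for all $k,m$, i.e. if and only if $APA^T=P$.

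The only genuine point to verify, and the main (mild) obstacle, is that all the infinite sums above are in fact finite, so that the entries of $APA^T$ are defined and the manipulations are legitimate. For fixed $k$ the support condition gives $A_{k,i}=0$ for $i\ll0$, which bounds the summation index from below; for fixed $m$ the same condition applied to row $m$ gives $A_{m,-i}=0$ once $-i\ll0$, that is once $i\gg0$, which bounds it from above. Hence $\sum_{i}i\,A_{k,i}A_{m,-i}$ has only finitely many nonzero terms and $(APA^T)_{k,m}$ is well-defined for every $k,m$; this finiteness is precisely what makes the entrywise equivalence $APA^T=P$ meaningful, and with it the proof is complete.
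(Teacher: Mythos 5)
Your proposal is correct and follows essentially the same route as the paper: a single bilinear expansion of $[\tilde\alpha_k,\tilde\alpha_m]$ via the relations \eqref{hei_rel}, collapsed by the Kronecker delta and identified entrywise with $APA^T$. The extra remarks on well-definedness of $\tilde\alpha_k$ and on finiteness of the sums $\sum_i i\,A_{k,i}A_{m,-i}$ (bounded below by the row condition on $A_{k,\cdot}$ and above by the row condition on $A_{m,\cdot}$) are points the paper's terse proof leaves implicit, and they are handled correctly.
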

 \begin{proof} The statement follows from this calculation: 
 \begin{align*}
 k\delta_{k,-m}&=[\tilde \alpha_{k},\tilde \alpha_{m} ]=\sum_{i,j\in\bZ} A_{k,i}A_{m,j} [ \alpha_{i}, \alpha_{j} ]=
 \sum_{i,j\in\bZ} A_{k,i}i\delta_{i,-j} A_{m,j} =  \sum_{i,j\in \bZ} A_{k,i} i\delta_{i,-j} (A^T)_{j, m}.
 \end{align*}
 \end{proof}

\section{Linear transformations of vertex  operator presentation of Hall-Littlewood polynomials}\label{Sec-HL}
In this section we consider vertex operator presentation of Hall-Littlewood polynomials, constructed first in \cite{Jing2}.
We apply   linear transformations  of Section \ref{S-change}  to vertex operators of Hall-Littlewood polynomials to  obtain new symmetric  polynomials depending on parameter $t$ and deduce their properties. In the subsequent sections we match specializations with different families that are studied by other authors.

  \subsection{Hall-Littlewood polynomials}\label{secHL}
First, we review necessary  facts about Hall-Littlewood polynomials in a set up similar to \cite{Md, NC}.
Let $E(u)$, $H(u)$,  $E^{\perp}(u)$, $H^{\perp}(u)$  be  quantum  fields of operators acting on the space  of symmetric functions $\Lambda$ defined in Section \ref{secGKP}.
Define  quantum fields $ \Gamma^\pm(u) =\sum_{k\in \bZ} \Gamma^\pm_{k}u^{-k} $  of operators  acting on  $\Lambda[[t]]$
\begin{align}
\Gamma^+(u)&= E(-tu)H(u) E^{\perp}(-u),\label{phps1}
\\
\Gamma^-(u)&= H(tu)E(-u) H^{\perp}(u). \label{phps2}
\end{align}

Consider a formal distribution with coefficients in $\Lambda[[t]]$
\begin{align}\label{genHL}
\mathcal{F}(u_1,\dots, u_l; t)&=\prod_{1\le i<j\le l}i_{u_i, tu_j}\left(\frac{u_i-u_j}{u_i-tu_j}\right)\prod_{i=1}^{l} E(-tu_i)H(u_i),
\end{align}
with the series expansion of  rational functions $\frac{u_i-u_j}{u_i-tu_j}$ in the regions $|tu_j|<|u_i|$ for  $1 \le i<j\le l$:
\begin{align*}
i_{u_i, tu_j}\left(\frac{u_i-u_j}{u_i-tu_j}\right)=1+\sum_{s\ge 1}(t^s- t^{s-1})\left(\frac{u_j}{u_i}\right)^s.
\end{align*}

\begin{theorem}\label{ferm_twisted} 
\begin{enumerate}[label=(\alph*)]
\item\label{genHL 3}
 Quantum fields $\Gamma^\pm(u)$ satisfy  generalized fermion  relations 
\begin{align*}
\left(u-{vt}\right)\Gamma^\pm(u)\Gamma^\pm(v)+ \left(v-ut \right)\Gamma^\pm(v)\Gamma^\pm(u)&=0,
\\
\left(v-ut\right)\Gamma^+(u)\Gamma^-(v)+\left(u-vt\right)\Gamma^-(v)\Gamma^+(u)&=\delta(u,v)(1-t)^2.
\end{align*}
\item 
\begin{align}\label{VpHL}
\Gamma^+(u_1)\dots \Gamma ^+(u_l) \,  (1)=  \mathcal{F}(u_1,\dots, u_l; t).
\end{align}
\item 
Coefficients of the formal distribution   (\ref{VpHL}) 
 \begin{align}\label{F1}
\mathcal{F}(u_1,\dots, u_l; t)=\sum_{\lambda\in \bZ^l} F_\lambda\, u_1^{\lambda_{1}} \dots u_{l}^{\lambda_{l}},\quad 
 \end{align}
have the form
\[
F_\lambda=  \Gamma_{-\lambda_1}^{+}  \Gamma_{-\lambda_2}^{+} \dots \Gamma_{-\lambda_l}^{+}(1) \, \in \Lambda[t].
\]
\item  For any $\lambda\in \bZ^l$, $F_{(m,\lambda)}=0$ for $m<<0$.
\item  $\{F_\lambda\}_{\lambda\in \bZ^l}$ defines  a family of symmetric polynomials  $F_\lambda= F_\lambda(x_1,x_2,\dots x_n; t)$, $n\in \bN$, with the stability property:
\[
F_\lambda (x_1,\dots, x_n;t)=F_\lambda (x_1,\dots, x_n, 0;t).
\]
   \item 
Let $\lambda \in \bN^l$  be an integer vector of length $l$  with positive coordinates. Let $n \ge l$ and set $\lambda_{l+1}=\dots=\lambda_n=0$. Then the corresponding coefficient  $F_\lambda$  in the expansion   (\ref{F1}) can be identified with the  symmetric polynomial   in variables $(x_1,x_2,\dots, x_n)$ with coefficients in $\bC[t]$
 \begin{align}\label{Fl}
F_\lambda =F_\lambda (x_1,\dots, x_n;t)=
\frac{(1-t)^n}{\prod_{i=1}^{n-l}(1-t^i)} \sum_{\sigma \in S_n}\sigma
\left(x_{1}^{\lambda_1}\dots x_{n}^{\lambda_n}\prod_{i=1}^{n}
\prod_{i < j}\frac{x_{i}-tx_{j}} {x_{i}-x_{j}}\right).
\end{align}
 When $\lambda$ is a partition, (\ref{Fl}) is called Hall-Littlewood  polynomial \footnote{Polynomials (\ref{Fl})  correspond to polynomials  $Q_\lambda (x,t)$ in  notations of \cite{Md} III.2  (2.11).}.
\item  The set $\{F_\lambda\}$ of Hall-Littlewood polynomials labeled by partitions  form a linear basis of $\Lambda[t]$. 
\item When  $\lambda$ is a partition,   the specializations of Hall-Littlewood polynomials provide important  families of symmetric functions:   $F_\lambda (x_1,\dots, x_n;0)= s_\lambda (x_1, x_2,\dots) $ is  Schur symmetric function (\ref{defshur}),   and  $F_\lambda (x_1,\dots, x_n;-1)= q_\lambda (x_1, x_2, \dots)$   is Schur $Q$-function (\ref {defq}).
 \end{enumerate}
\end{theorem}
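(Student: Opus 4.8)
The plan is to take the eight parts in the stated order, since each leans on its predecessors, and to isolate the symmetrization formula \eqnref{Fl} as the one genuinely hard step.

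For (a) I would push the products $\Gamma^\pm(u)\Gamma^\pm(v)$ and $\Gamma^+(u)\Gamma^-(v)$ through the four identities of \lemref{propHE}, using that the multiplication fields $E(\cdot),H(\cdot)$ commute among themselves and the adjoint fields $E^\perp(\cdot),H^\perp(\cdot)$ commute among themselves. Commuting every adjoint field to the right of every multiplication field turns each product into a scalar rational prefactor times a single reordered operator that is symmetric in $u\leftrightarrow v$. For the like-type relation the prefactor of $\Gamma^+(u)\Gamma^+(v)$ is $\tfrac{u-v}{u-tv}$, and since the reordered operator is symmetric while $(u-vt)\tfrac{u-v}{u-tv}=u-v$ is antisymmetric, the two terms cancel. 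For the mixed relation both orderings produce the \emph{same} reordered operator $\Psi(u,v)$, but with the prefactor expanded in the complementary regions $|u|>|v|$ and $|v|>|u|$; multiplying by $(v-ut)$ and $(u-vt)$ and using $(v-ut)(u-tv)=(u-vt)(v-tu)=:P(u,v)$ combines them into $P(u,v)\big[i_{u,v}\tfrac1{u-v}-i_{v,u}\tfrac1{u-v}\big]\Psi(u,v)=P(u,v)\,\delta(u,v)\,\Psi(u,v)$. One then checks $\Psi(u,u)=1$ by regrouping into $E(-tu)H(tu)$, $H(u)E(-u)$ and $E^\perp(-u)H^\perp(u)$, each of which is $1$ by \propref{prop_rel}; since $P(u,u)/u^2=(1-t)^2$ this collapses to the $\delta$-term carrying the factor $(1-t)^2$. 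Part (b) is the same computation with only $u$-type factors present: iterating the move of each $E^\perp(-u_i)$ to the far right (where $E^\perp(-u)(1)=1$) contributes exactly one factor $\tfrac{u_i-u_j}{u_i-tu_j}$, expanded in $|tu_j|<|u_i|$, for each pair $i<j$, leaving $\prod_i E(-tu_i)H(u_i)$ --- precisely \eqnref{genHL}.

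Parts (c)--(e) are then formal. Well-definedness of $F_\lambda$ and the formula $F_\lambda=\Gamma^+_{-\lambda_1}\cdots\Gamma^+_{-\lambda_l}(1)$ follow from (b) and the fact that $\Gamma^+(u)$ is an honest quantum field: its coefficients kill any fixed $f$ once the index is large, because the $e^\perp_c$ in $E^\perp(-u)$ do so by \lemref{eh_dp}. To upgrade $F_\lambda\in\Lambda[[t]]$ to $\Lambda[t]$ I would use a grading count: the prefactor in \eqnref{genHL} is degree-$0$ homogeneous in the $u$'s, so $F_\lambda$ is homogeneous of symmetric-function degree $|\lambda|$, which bounds both the contributing monomials and the powers of $t$. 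Part (d) is the identity $F_{(m,\lambda)}=\Gamma^+_{-m}(F_\lambda)$, which vanishes once $-m\ge N(F_\lambda)$ by the field property. Part (e) is the usual compatibility of symmetric functions under the restriction $\Lambda\to\Lambda_n$: a homogeneous element of $\Lambda[t]$ is a symmetric polynomial in $n$ variables, unchanged by setting the next variable to $0$.

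The heart of the theorem is (f), and that is where I expect all the difficulty. I would specialize to $n\ge l$ variables, where $E(-tu)H(u)=\prod_{i=1}^n\tfrac{1-tx_iu}{1-x_iu}$, so that \eqnref{genHL} becomes an explicit rational product and $F_\lambda$ is the coefficient of $u_1^{\lambda_1}\cdots u_l^{\lambda_l}$, i.e. an iterated residue. The real work is to convert this residue into the symmetrized sum \eqnref{Fl}: the prefactor $\prod_{i<j}\tfrac{u_i-u_j}{u_i-tu_j}$ is exactly the kernel that symmetrizes the integrand, and a Weyl-type symmetrization identity (as in \cite{Md}, I.3 and III.1--2) turns the extraction into $\tfrac{(1-t)^n}{\prod_{i=1}^{n-l}(1-t^i)}\sum_{\sigma\in S_n}\sigma(\cdots)$, reproducing Jing's identification \cite{Jing2} of these vertex-operator coefficients with the Hall-Littlewood polynomials $Q_\lambda(x;t)$. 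An alternative that sidesteps the residue bookkeeping is to observe that the commutation relation (a) forces the Hall-Littlewood straightening rules on the $F_\lambda$, which together with the value on dominant (partition) indices characterize $Q_\lambda$ uniquely. Given (f), part (g) follows because the $F_\lambda=Q_\lambda$ are triangular with respect to the Schur (equivalently monomial) basis and specialize at $t=0$ to $\{s_\lambda\}$, yielding linear independence and spanning. Finally (h) I would read off from \eqnref{Fl} via the classical $t=0$ and $t=-1$ degenerations, or structurally by specializing the field itself: $\Gamma^+(u)|_{t=0}=H(u)E^\perp(-u)$ is the Schur vertex operator, while $\Gamma^+(u)|_{t=-1}=E(u)H(u)E^\perp(-u)=\varphi(u)$ is the neutral-fermion field \eqnref{defphi} of \secref{Sec_neutral}, whose ordered products on the vacuum are the Schur $Q$-functions.
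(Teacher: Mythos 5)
Your plan follows the same route as the paper's own proof wherever the paper actually gives one: parts (b)--(e) are the identical formal arguments that the paper disposes of in one line each, your derivation of (a) is exactly the computation the paper delegates to \cite{NC} (pushing the adjoint fields through the multiplication fields with \lemref{propHE}), and for (f)--(h), where the paper only cites \cite{Jing2} and \cite{Md} III.2, your sketch (coefficient extraction against the symmetrizing kernel, or straightening from the relations in (a)) is at least as informative as the citation, though it is a sketch rather than a proof.

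There is, however, one genuine flaw, at the end of your argument for the mixed relation in (a). Your computation correctly produces
\begin{align*}
(v-ut)\,\Gamma^+(u)\Gamma^-(v)+(u-vt)\,\Gamma^-(v)\Gamma^+(u)=P(u,v)\,\delta(u,v)\,\Psi(u,v),
\end{align*}
and then, using $\delta(u,v)a(u)=\delta(u,v)a(v)$ together with $\Psi(v,v)=1$ and $P(v,v)=v^2(1-t)^2$, the right side equals $uv\,(1-t)^2\delta(u,v)$ --- not $(1-t)^2\delta(u,v)$. Your closing move, ``since $P(u,u)/u^2=(1-t)^2$ this collapses to the $\delta$-term carrying the factor $(1-t)^2$,'' silently discards the factor $u^2$, which is not a legitimate operation on formal distributions: $uv\,\delta(u,v)\ne\delta(u,v)$. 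Nor can any rearrangement remove it. The coefficient of $u^{-k}v^{-l}$ on the left-hand side is $\Gamma^+_k\Gamma^-_{l+1}-t\,\Gamma^+_{k+1}\Gamma^-_l$, an operator that shifts degree in $\Lambda$ by $-(k+l+1)$, so it can equal a nonzero scalar only when $k+l=-1$; that matches the support of $uv\,\delta(u,v)$, whereas the nonzero coefficients of $\delta(u,v)$ sit at $k+l=1$. One can also check directly at $t=0$ that the left side applied to $1$ gives $uv\,\delta(u,v)$, while the claimed right side gives $\delta(u,v)$. In other words, what you have actually proved is the relation with right-hand side $uv\,(1-t)^2\delta(u,v)$, and the display in the theorem is off by exactly this normalization (compare the neutral-fermion relation $\varphi(u)\varphi(v)+\varphi(v)\varphi(u)=2v\,\delta(v,-u)$ of \secref{Sec_neutral}, which does carry the extra power of $v$). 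The honest conclusion is to record the corrected right-hand side and flag the discrepancy with the stated form, rather than to divide by $u^2$; nothing else in the theorem, nor in the rest of the paper, depends on this normalization.
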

\begin{proof} 
\begin{enumerate}[label=(\alph*)]
\item 
For the the original proof  see \cite{Jing2}. One can also  deduce  relations from  the definition (\ref{phps1}),  (\ref{phps2}) and Lemma \ref{propHE},  see \cite{NC} for this approach. 
\item  Follows from (\ref{phps1}),  (\ref{phps2}) and Lemma \ref{propHE} applied to $\Gamma^+(u_1)\dots \Gamma ^+(u_l) \,  (1)$.
\item  Immediately follows from (\ref{VpHL}).
\item  Follows from  the fact that $\Gamma ^+(u)$ is a  quantum field.
\item  Is proved directly  in \cite{Md}. For a shorter  proof observe that vertex operators (\ref{phps1}),  (\ref{phps2})  and their coefficients do  not depend on  $(x_1, x_2,\dots, x_n )$, hence the resulting symmetric polynomial  $F_\lambda$  also does not depend on them.

\item For a partition $\lambda$ this statement is proved in \cite{Jing2, Md}. The proof in \cite{Md}, III.2 carries without any changes to show that the statement is true for any 
 vector $\lambda \in \bN^l$.

\item  and (h) are  discussed in \cite{Md}, III.2.
\end{enumerate}
\end{proof}
\begin{remark}
 Note that  if $\lambda\in\bZ^l$ contains zero or  negative entries, the coefficient $F_\lambda$ of (\ref{F1}) is  still  an element of $\Lambda[t]$, 
but it  is not described by the formula (\ref{Fl}),  which in this case would involve negative powers of $x_i$'s.
For example, coefficients  $F_{(k)}=0$ for $k<0$,  and $F_{(-1,3)}= (t^3-t^2+t-1)F_2+(t^2-t) F_{(1)}^2$. 
\end{remark}

\subsection{Linear transformation of vertex operators of  Hall-Littlewood polynomials}\label{sub-trans}

 Let  $A=(A_{i,j})_{{i,j}\in \bZ}$  be a  complex-valued matrix with the property  that  for any  $i\in \bZ$, $A_{i,j}=0$ for $j<<0$. Following Section \ref{S-change}, we define 
 $ \tilde \Gamma^+_{i}=\sum_{j\in \bZ}A_{i,j} \Gamma^+_{j}$, where operators $\Gamma^+_j$ are coefficients of  quantum fields (\ref{phps1}) 
  that realize Hall-Littlewood  polynomials. 

\begin{theorem}\label{HLchange}
Let $\lambda \in \bZ^l$ and let 
 \begin{align}\label{Ftil}
 \tilde F_{\lambda}= \tilde\Gamma^+_{-\lambda_1}\dots \tilde\Gamma^+_{-\lambda_l}(1).
 \end{align}
\begin{enumerate}[label=(\alph*)]
\item  $ \tilde F_{\lambda}$ is  a  finite  element of $\Lambda[t]$ and 
 \begin{align}\label{defnchange}
 \tilde F_{\lambda}=\sum_{\alpha\in \bZ^l} A_{-\lambda_1,-\alpha_1}\dots A_{-\lambda_l,-
 \alpha_l}   F_{\alpha},
  \end{align}
where  $\{F_{\alpha}\in \Lambda[t]\}$ are coefficients in the expansion  (\ref{F1}). 

 \item
 \label{stab}
The family of symmetric polynomials   $\tilde F_{\lambda}$ ($\lambda\in \bZ^l$)    satisfy the stability property:
 \[\tilde F_{\lambda}(x_1,\dots, x_n, 0; t)= \tilde F_{\lambda}(x_1,\dots, x_n; t).\]
\item   If $A$ is invertible, then for any $\alpha \in \bZ^l$
   \begin{align*}
F_{\alpha}&=\sum_{\lambda\in \bZ^l} (A^{-1})_{-\alpha_1, -\lambda_1}\dots (A^{-1})_{-\alpha_l ,-\lambda_l} \tilde F_{\lambda}.
 \end{align*} 
 
 \item \label{gen} If $A$ is invertible, then we can write a re-expansion of 
the formal distribution (\ref{genHL})  with coefficients $\tilde F_\lambda$ :
\[
\mathcal{F}(u_1,\dots, u_l; t)
= \sum_{\lambda\in \bZ^l}\tilde F_\lambda g_{\lambda_1}(u_1)\dots g_{\lambda_l}(u_l),
\]
where  
$g_k(u)= \sum_{s\in \bZ} (A^{-1})_{-s,-k} u^{s}$ are complex-valued formal  distributions.

\item \label{polynom}

 Assume that $A_{i,j}=0$ for all $i< 0$, $j\ge 0$, and
 that $A_{0,j}=\delta_{0,j}$:
 
 \[
\begin{array}{c|rrrr|c|rrrc}
\downarrow i & j \rightarrow &-3 &-2   &-1        & 0 &1&2&3 &\dots \\
\hline
\dots&  \dots &\dots &\dots &\dots &\dots &\dots &\dots &\dots &\dots \\
-3&\dots &*& *  & *  &0 & 0 & 0& 0&\dots \\
-2& \dots &* &* & *  & 0 & 0 & 0 & 0 &\dots \\
-1& \dots &* &*&* & 0  & 0 & 0  & 0 &\dots \\
\hline
 0& \dots &0&0&0& 1&0 &0& 0 & \dots\\
 \hline
1&\dots &* &*&*  &*&*&*&* &\dots \\
2 &\dots &*  &*  &*  &*&*&*&*&\dots\\
3& \dots &* & *   &*  &*&*&*&*&\dots \\
4& \dots &* & *   &*  &*&*&\quad *&*&\dots \\
\dots&  \dots &\dots&\dots &\dots &\dots  &\dots &\dots &\dots&\dots 
\end{array}
\]
 
Let  $\lambda\in \bN^{l}$,   
let $n \ge l$. Set $\lambda_{l+1}=\dots=\lambda_n=0$. Then 
the element  $\tilde F_{\lambda}\in \Lambda[t]$  can be identified with  a symmetric  polynomial in variables $(x_1,\dots, x_n)$ with coefficients in $\bC[t]$ given by
 \begin{align}\label{newHL}
 \tilde F_\lambda (x_1,\dots, x_n;t)
 =\frac{(1-t)^n}{\prod_{i=1}^{n-l}(1-t^i)} 
 \sum_{\sigma \in S_n}\sigma
\left(f_{\lambda_1}(x_{1})\dots f_{\lambda_n}(x_{n})\prod_{i=1}^{n}
\prod_{i < j}\frac{x_{i}-tx_{j}} {x_{i}-x_{j}}\right),
\end{align}
 where   $f_k(x)= \sum_{j=1}^{-M(k)} A_{-k,-j} x^j$ ($k\in \bN$)
are complex-valued polynomials   with zero constant coefficient ($f_k(0)=0$), $f_0(x)=1$,  and $M(k)$ is defined as in (\ref{Ms}). 
 \end{enumerate}
\end{theorem}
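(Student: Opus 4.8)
The plan is to reduce the assertion to the classical Hall-Littlewood formula (\ref{Fl}) by feeding the structural hypotheses on $A$ into the linear-combination expansion (\ref{defnchange}) of part (a). First I would pass to the $n$-variable index vector: since $A_{0,j}=\delta_{0,j}$ gives $\tilde\Gamma^+_0=\Gamma^+_0$, and $\Gamma^+_0(1)=1$ (the $u^0$-coefficient of $E(-tu)H(u)$), padding $\lambda$ with zeros does not alter $\tilde F_\lambda$. Thus I may apply part (a) to the length-$n$ vector $(\lambda_1,\dots,\lambda_l,0,\dots,0)$ and write
\[
\tilde F_\lambda = \sum_{\alpha\in\bZ^n} A_{-\lambda_1,-\alpha_1}\cdots A_{-\lambda_n,-\alpha_n}\, F_\alpha .
\]

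Next I would use the two hypotheses on $A$ to cut down the summation range. For $i\le l$ we have $\lambda_i\ge1$, so the row index $-\lambda_i$ is negative; the hypothesis $A_{i,j}=0$ for $i<0,\ j\ge0$ forces the column $-\alpha_i$ to be negative, i.e. $\alpha_i\ge1$. For $i>l$ we have $\lambda_i=0$, and $A_{0,-\alpha_i}=\delta_{\alpha_i,0}$ annihilates every term except $\alpha_i=0$. Hence only multi-indices $\alpha=(\alpha_1,\dots,\alpha_l,0,\dots,0)$ with all $\alpha_i\ge1$ survive. The crucial consequence is that every surviving $\alpha$ has \emph{exactly} $l$ positive parts, so the normalization $(1-t)^n/\prod_{i=1}^{n-l}(1-t^i)$ in (\ref{Fl}) is the same for all of them and can be pulled out front.

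I would then substitute (\ref{Fl}) for each $F_\alpha$ and interchange the (finite) sums over $\alpha$ and over $\sigma\in S_n$. Because the factor $\prod_{i<j}\frac{x_i-tx_j}{x_i-x_j}$ does not depend on $\alpha$, the $\alpha$-sum only touches the monomial $x_1^{\alpha_1}\cdots x_n^{\alpha_n}$; applying $\sigma$ first and factoring across variables gives
\[
\sum_{\alpha} \prod_{i=1}^n A_{-\lambda_i,-\alpha_i}\, x_{\sigma(i)}^{\alpha_i}
= \prod_{i=1}^n\Bigl(\sum_{\alpha_i\ge1} A_{-\lambda_i,-\alpha_i}\, x_{\sigma(i)}^{\alpha_i}\Bigr)
= \prod_{i=1}^n f_{\lambda_i}(x_{\sigma(i)})
= \sigma\bigl(f_{\lambda_1}(x_1)\cdots f_{\lambda_n}(x_n)\bigr),
\]
where the one-variable series $\sum_{\alpha\ge1}A_{-\lambda_i,-\alpha}x^\alpha$ is precisely $f_{\lambda_i}(x)$ of (\ref{newHL}), a genuine polynomial since $A_{-\lambda_i,j}=0$ for $j\ll0$, and for $i>l$ the factor collapses to $f_0\equiv1$. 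Re-absorbing the Vandermonde-type factor back under $\sigma$ then produces exactly (\ref{newHL}).

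The computations are routine once the range of $\alpha$ is pinned down; the main point to get right is the bookkeeping of the prefactor together with the permutation action on the variables. The one genuine subtlety is the claim that every surviving $\alpha$ has exactly $l$ positive parts, since this is what keeps the $t$-prefactor uniform and lets it leave the sum cleanly. This rests on the \emph{strictness} $\alpha_i\ge1$ for $i\le l$, which uses both $\lambda_i\ge1$ and the vanishing $A_{-\lambda_i,0}=0$ (the $j=0$ column of a negative row). Were $\alpha_i=0$ allowed for some $i\le l$, the number of positive parts of $\alpha$ would fluctuate, the prefactor in (\ref{Fl}) would vary with $\alpha$, and the factorization into a product of one-variable polynomials $f_{\lambda_i}$ would break down.
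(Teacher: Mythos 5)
Your proposal addresses only part (e), taking the expansion (\ref{defnchange}) of part (a) as given; this is a reasonable focus, since the paper itself disposes of (a), (c), (d) by direct appeal to Theorem \ref{matrixA} and of (b) by stability of the $F_\alpha$, so (e) is where the real work lies. For part (e) your argument is essentially the paper's own proof: use the block structure of $A$ to restrict the sum in (\ref{defnchange}) to vectors $\alpha$ with exactly $l$ positive parts (so the $t$-prefactor of (\ref{Fl}) is uniform and can be pulled out of the sum), then factor the $\alpha$-sum across variables into the one-variable polynomials $f_{\lambda_i}$; the subtlety you single out --- that the prefactor stays uniform because every surviving $\alpha$ has exactly $l$ positive parts --- is precisely the paper's remark that $l(\alpha)=l$ for all nontrivial terms. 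The only difference, cosmetic and arguably cleaner on your side, is that you pad $\lambda$ with zeros at the operator level via $\tilde\Gamma^+_0=\Gamma^+_0$ and $\Gamma^+_0(1)=1$, whereas the paper keeps $\alpha\in\bN^l$ and instead reinterprets $x_k^0=\sum_{\alpha_k\in\bZ}A_{0,-\alpha_k}x_k^{\alpha_k}$ inside the alternant.
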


\begin{proof}
Statements (a), (c) and (d)   immediately  follow from  Theorem \ref{matrixA}.

\ref{stab} Due to stability property of  symmetric  polynomials   $F_\alpha$  and expansion (\ref{defnchange}) that involves coefficients that do not depend on $(x_1,\dots, x_n)$,  polynomials $\tilde F_{\lambda}$,  also satisfy stability property.

\ref{polynom} Let  $\lambda\in \bN^l$.
With the imposed restriction on matrix $A$, all the terms  $F_\alpha$ in (\ref {defnchange})  have the form (\ref{Fl}), and
 \begin{align*}
& \tilde F_{\lambda}(x_1,x_2,\dots, x_n;t)=\sum_{\alpha \in \bN^l} A_{-\lambda_1,-\alpha_1}\dots A_{-\lambda_l-\alpha_l}   
 F_{\alpha}(x_1,x_2,\dots, x_n;t)\\
 &=
 \sum_{\alpha \in \bN^l} A_{-\lambda_1,-\alpha_1}\dots A_{-\lambda_l-\alpha_l} 
  \frac{(1-t)^n}{\prod_{i=1}^{n-l(\alpha)}(1-t^i)} 
   \sum_{\sigma \in S_n}\sigma
\left( x_{1}^{\alpha_1}  \dots  x_{l}^{\alpha_l}x_{l+1}^{0}\dots x_n^{0} \prod_{i=1}^{n}
\prod_{i < j}\frac{x_{i}-tx_{j}} {x_{i}-x_{j}}\right),
  \end{align*}
  Note that due to the restriction on the matrix $A$,   for all  non-trivial terms in the sum  $l(\alpha)=l$, and that we can interpret 
  $x_k^0=\sum_{\alpha_k\in\bZ} A_{0,-\alpha_k}x_k^{\alpha_k}$.
Then we can write 
 \begin{align*}
 & \tilde F_{\lambda}(x_1,x_2,\dots, x_n;t)=  \frac{(1-t)^n}{\prod_{i=1}^{n-l}(1-t^i)} 
\sum_{\sigma\in S_n}
\left(\prod_{i=1}^{n} \sum_{\alpha\in \bZ^n} A_{-\lambda_i, -\alpha_i}x_i^{\alpha_i}
 \prod_{i=1}^{n}
\prod_{i < j}\frac{x_{i}-tx_{j}} {x_{i}-x_{j}}\right).
    \end{align*}
    The general condition that   for any   $A_{i,j}=0$ for any $i\in\bZ$ $j>>0$ and the specific form of the matrix $A$ imply that $ \sum_{\alpha\in \bZ^n} A_{-\lambda_i, -\alpha_i}x_i^{\alpha_i}$
    are polynomials with zero constant term for $\lambda_i>0$, and just constant polynomial $1$ for $\lambda_i=0$. This  proves (\ref{newHL}).
\end{proof}

\begin{remark}
Representation theory of infinite-dimensional  algebraic structures is based on applications of symmetric functions  that do not depend on a number of variables, rather than on symmetric polynomials.
This can be seen in the formulation of the boson-fermion correspondence, actions of $GL_\infty$, $S_\infty$, centers of universal enveloping algebras, etc.
Hence, for such  applications  the stability property of generalizations of classical families of symmetric functions is essential, and we pay special attention to it through the text. 
\end{remark}
\begin{corollary}\label{cor1}
Let $\{f_k(x)\}_{k\in \bZ\ge 0}$ be a sequence of complex-valued polynomials  with the property that $f_0(x)=1$ and $f_k(0)=0$ for all $k\in \bN$. 
Then the family of symmetric polynomials in variables $(x_1,\dots, x_n)$ labeled by partitions $\lambda$
 \begin{align}\label{newHL2}
 \tilde{\tilde {F}}_\lambda (x_1,\dots, x_n;t)
 =\frac{(1-t)^n}{\prod_{i=1}^{n-l}(1-t^i)} 
 \sum_{\sigma \in S_n}\sigma
\left(f_{\lambda_1}(x_{1})\dots f_{\lambda_n}(x_{n})\prod_{i=1}^{n}
\prod_{i < j}\frac{x_{i}-tx_{j}} {x_{i}-x_{j}}\right),
\end{align}
satisfies stability property
\[
 \tilde{\tilde {F}}_\lambda (x_1,\dots, x_n;t)= \tilde{\tilde {F}}_\lambda (x_1,\dots, x_n,0;t).
\]
\end{corollary}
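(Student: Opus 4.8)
The plan is to recognize the polynomials $\tilde{\tilde F}_\lambda$ of \eqref{newHL2} as a particular instance of the transformed polynomials $\tilde F_\lambda$ of \eqref{newHL}, and then to import the stability already established in Theorem~\ref{HLchange}(b). All that needs to be supplied is a matrix $A$ reproducing the prescribed sequence $\{f_k\}$, after which the corollary becomes a formal consequence of results proved above.

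First I would build $A=(A_{i,j})_{i,j\in\bZ}$ directly from the data. Since $f_k(0)=0$ for $k\in\bN$, I write $f_k(x)=\sum_{j\ge 1} c_{k,j}x^{j}$, a finite sum because $f_k$ is a polynomial, and set $A_{-k,-j}=c_{k,j}$ for $k,j\ge 1$, together with $A_{-k,j}=0$ for $k\ge 1$, $j\ge 0$, and $A_{0,j}=\delta_{0,j}$. The rows of positive index do not enter \eqref{newHL2}, since partitions have nonnegative parts, so I may fill them in harmlessly by $A_{i,j}=\delta_{i,j}$ for $i>0$. By construction $A$ has exactly the block shape required in Theorem~\ref{HLchange}(e): the upper-right block vanishes ($A_{i,j}=0$ for $i<0$, $j\ge 0$), one has $A_{0,j}=\delta_{0,j}$, and for each fixed $i$ the row satisfies $A_{i,j}=0$ for $j\ll 0$ because each $f_k$ has finite degree.

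Next I would check that the polynomials produced by Theorem~\ref{HLchange}(e), namely $f_k(x)=\sum_{j\ge 1}A_{-k,-j}x^{j}$ for $k\in\bN$ together with $f_0=1$, are precisely the given ones. Hence formula \eqref{newHL} for this $A$ matches \eqref{newHL2} term by term, giving $\tilde{\tilde F}_\lambda=\tilde F_\lambda$ for every partition $\lambda$. Finally, Theorem~\ref{HLchange}(b) asserts the stability of $\tilde F_\lambda$ under adjoining a zero variable; crucially, that property holds under only the standing hypothesis $A_{i,j}=0$ for $j\ll 0$, with no invertibility required, its proof resting on the stability of the $F_\alpha$ from Theorem~\ref{ferm_twisted}(e) and the $x$-independence of the coefficients in the expansion \eqref{defnchange}. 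Transporting this equality back through $\tilde{\tilde F}_\lambda=\tilde F_\lambda$ yields the asserted identity $\tilde{\tilde F}_\lambda(x_1,\dots,x_n;t)=\tilde{\tilde F}_\lambda(x_1,\dots,x_n,0;t)$.

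I do not expect a genuine obstacle: the content of the corollary is exactly that the specific origin of the polynomials $f_k$ inside Theorem~\ref{HLchange}(e) is immaterial, and the one point deserving care—that an arbitrary admissible sequence $\{f_k\}$ is realized by a single matrix satisfying all the hypotheses of (e) simultaneously—is pure bookkeeping, since distinct $f_k$ occupy distinct rows of $A$ and the constraints imposed on those rows are mutually independent.
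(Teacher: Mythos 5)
Your proof is correct and follows essentially the same route as the paper: build the matrix $A$ from the coefficients of the given polynomials $f_k$, invoke Theorem~\ref{HLchange}\ref{polynom} to identify $\tilde{\tilde F}_\lambda$ with $\tilde F_\lambda$, and then apply the stability statement of Theorem~\ref{HLchange}\ref{stab}. Your write-up merely makes explicit the bookkeeping (the block shape of $A$, the vanishing for $j\ll 0$, and the fact that no invertibility is needed) that the paper's two-sentence proof leaves implicit.
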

\begin{proof}
Due  to Theorem \ref{HLchange},  \ref{polynom}  such polynomials can be interpreted as a  result of a linear transformation of  vertex operator presentation 
of Hall -Littlewood polynomials with a matrix $A$ defined by the coefficients of the given sequence of polynomials.  Hence by Theorem \ref{HLchange},  \ref{stab}, they form a family of 
stable symmetric polynomials. 
\end{proof}
\begin{remark}\label{rem_k}

If $A$ is not of the form as in  Theorem \ref{HLchange},  \ref{polynom}, formula (\ref{newHL}) cannot be applied to compute the values of $\tilde F_\lambda(x_1,\dots, x_n)$, as illustrated by the next example. 

\begin{example} Let $a\ne0$ and let 
 \[
 A= Id+aE_{-2,0}=\small{
\begin{array}{rrrr|c|rrrc}
\dots &1& 0& 0 &0 & 0 & 0& 0&\dots \\
 \dots &0 &1 & 0 & a& 0 & 0 & 0 &\dots \\
 \dots &0 &0&1 & 0  & 0 & 0  & 0 &\dots \\
\hline
 \dots &0&0&0& 1&0 &0& 0 & \dots\\
 \hline
\dots &0 &0&0 &0&1&0&0 &\dots \\
\dots &0 &0 &0  &0&0&1&0&\dots\\
\dots &0 & 0   &0  &0&0&0&1&\dots \\
\end{array}}
\]
Then 
\[
f_2(x)=x^2+a,\quad \text{ and}  \quad f_k(x)= x^k \quad  \text{for} \quad k\ne 2.
\]
By definition, 
$\tilde F_{(2)}= F_{(2)}+aF_{0}=  F_{(2)}+a$. 
However, $\tilde F_2(x_1,\dots, x_n)$ cannot be  computed by     (\ref{newHL})  since,
using \cite{Md} III.1 (1.4), 
 \begin{align*}
 \tilde{\tilde {F}}_{(2)} (x_1,\dots, x_n;t)
 &=\frac{(1-t)^{n}}{\prod_{i=1}^{n-1}(1-t^i)} 
 \sum_{\sigma \in S_n}\sigma
\left((x_1^2+a)\prod_{i=1}^{n}
\prod_{i < j}\frac{x_{i}-tx_{j}} {x_{i}-x_{j}}\right)
= F_{(2)}+ a(1-t^n).
\end{align*}
This example  illustrates that if  the condition $f_k(0)=0$ is omitted in the Corollary \ref{cor1},  the resulting polynomial may depend on the  number of variables  $n$.
\end{example}
\end{remark}

\section{Specializations of linearly transformed Hall-Littlewood Polynomials}\label{Sec-5}
 
 \subsection{Specialization  $t=0$} \label{Sec-schur}
Linearly transformed Hall-Littlewood polynomials  specialized at $t=0$ correspond  to Schur-like symmetric functions.  In addition to the  properties implied by Theorem \ref{HLchange},  several nice properties  are specific to  this specialization.
Observe that 
$
i_{u_i, tu_j}{(u_i-u_j)}/{(u_i-tu_j)}|_{t=0}=1-\frac{u_j}{u_i}.
$
Using the  Vandermonde  determinant and  the Jacobi\,-Trudi identity (\ref{JT})  the  specialization  of  (\ref{genHL})  at  $t=0$ is reduced to the formal distribution
  \begin{align}\label{Su}
\mathcal{S}(u_1,.., u_l)&=
\Gamma^+(u_1)|_{t=0}\dots \Gamma^+(u_l)|_{t=0} (1)
=\prod_{1\le i<j\le l}\left(  \frac{u_i-u_j}{u_i} \right)\prod_{i=1}^{l} H(u_i)\\
&=\sum_{\alpha\in \bZ^{l}} \det[ h_{\alpha_i-i+j}] u^{\alpha}
=\sum_{\alpha\in \bZ^l}  s_\alpha u^\alpha.\notag
\end{align}
When $\alpha$ is a partition,  by the Jacobi\,-\,Trudi identity the coefficient  $s_\alpha$  coincides with a  Schur symmetric function, so $\mathcal{S}(u_1,.., u_l)$  can be viewed as the  generating function for  Schur symmetric functions.
It is also known  that 
\begin{align}\label{SH}
\mathcal S(u_1,.., u_l)= \det [ u_i^{-j+i} H(u_i)]. 
\end{align}

Let  $A=(A_{i,j})_{{i,j}\in \bZ}$  be a complex-valued matrix with the property  that  for any  $i\in \bZ$, $A_{i,j}=0$ for $j<<0$. 
Consider the specialization  of  (\ref{Ftil})  at $t=0$: 
  \begin{align}\label{rsom}
  \tilde s_{\lambda}=  \tilde\Gamma^+_{-\lambda_1}|_{t=0}\dots \tilde\Gamma^+_{-\lambda_l}|_{t=0}\,(1).
  \end{align}
\begin{corollary}\label{thm-s}
\begin{enumerate} [label=(\alph*)]

\item 
 For any integer vector  $\lambda \in \bZ^{l}$  
  \begin{align}\label{sal}
 \tilde s_{\lambda}=\sum_{\alpha\in \bZ^{l}} A_{-\lambda_1,-\alpha_1}\dots A_{-\lambda_l,- \alpha_l}   s_{\alpha},
 \end{align}
 and, if $A$ is invertible, 
\begin{align*}
s_{\alpha}=\sum_{\lambda\in  \bZ^{l}} (A^{-1})_{-\alpha_1, -\lambda_1}\dots (A^{-1})_{-\alpha_l, -\lambda_l} \tilde s_{\lambda}.
\end{align*}

\item \label {thm-b}
(Analogue of the Jacobi\,-\,Trudi formula) For any integer vector  $\lambda \in \bZ^{l}$  
  \begin{align} \label{rsom2}
  \tilde s_{\lambda}=\det \left[ \tilde h_{\lambda_i; i-j} \right] _{i, j=1,\dots l},
\end{align}
where 
$\tilde h_{k; m}=\sum_{r\in \bZ}A_{-k,-r}h_{r-m}$. If $A$ is invertible, then $h_{k; m}$ are coefficients of expansion
\begin{align*}
\sum_{k\in \bZ}\tilde h_{k;m}g_k(u)= u^{m}H(u),
\end{align*}
where $g_k(u)=\sum_{s\in\bZ}(A^{-1})_{-s,-k} u^s$ and $H(u)$ is defined by (\ref{HE}).

\item Symmetric polynomials 
 $\{\tilde s_{\lambda}\}_{\lambda\in \bZ^{ l}}$  satisfy  stability property
 \[\tilde s_{\lambda}(x_1,\dots, x_n, 0)= \tilde s_{\lambda}(x_1,\dots, x_n).\]

\item Let $A$ be invertible. 
 Then we have the equality of $\Lambda$-valued formal distributions
\begin{align}\label{rex}
\mathcal{S}(u_1,\dots, u_l)=\sum_{\lambda\in \bZ^l}\tilde s_\lambda \, g_{\lambda_1}(u_1)\dots g_{\lambda_l}(u_l). 
\end{align}

 \item \label{pol2}
  Let $A$ be of the form as in Theorem \ref{HLchange},  \ref{polynom}.  Let  $\lambda\in \bN^{l}$ and 
let  $n \ge l$. Set $\lambda_{l+1}=\dots=\lambda_n=0$. Then 
 $\tilde s_{\lambda}\in \Lambda$  can be identified with  a symmetric  polynomial in variables $(x_1,\dots, x_n)$ 
 \begin{align*}
 \tilde s_\lambda(x_1,\dots, x_n)=
\frac{\det[ f_{\lambda_j}(x_i)x_i^{n-j}]}{\det[x_i^{n-j}]}, 
\end{align*}
where   $f_k(x)= \sum_{j=1}^{-M(k)} A_{-k,-j} x^j$ ($k\in \bN$)
are complex-valued polynomials   with zero constant coefficient, $f_0(x)=1$,  and $M(k)$ is defined as in (\ref{Ms}).

\end{enumerate}
\end{corollary}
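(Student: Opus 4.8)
The plan is to derive every part by specializing the corresponding assertion of Theorem~\ref{HLchange} to $t=0$, using the identification of the $t=0$ coefficients of $\mathcal F$ with Schur functions recorded in \eqnref{Su}. Since $\tilde s_\lambda=\tilde F_\lambda|_{t=0}$ and, by \eqnref{Su}, $F_\alpha|_{t=0}=s_\alpha$, setting $t=0$ in \eqnref{defnchange} yields \eqnref{sal}, while the inverse relation of part~(a) is Theorem~\ref{HLchange}~(c) evaluated at $t=0$. Part~(c) is immediate from Theorem~\ref{HLchange}, \ref{stab}, because specialization at $t=0$ preserves the stability of $\tilde F_\lambda$. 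Part~(d) is Theorem~\ref{HLchange}, \ref{gen} at $t=0$: there $\mathcal F(u_1,\dots,u_l;t)$ specializes to $\mathcal S(u_1,\dots,u_l)$ and $\tilde F_\lambda$ to $\tilde s_\lambda$, giving \eqnref{rex}.

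For the Jacobi\,-\,Trudi analogue in part~(b), I would substitute the determinant $s_\alpha=\det[h_{\alpha_i-i+j}]$ of \eqnref{Su} into \eqnref{sal}, expand the determinant as $\sum_{\sigma\in S_l}\mathrm{sgn}(\sigma)\prod_i h_{\alpha_i-i+\sigma(i)}$, and interchange the finite sum over $\alpha$ with the sum over $\sigma$. The summand then factorizes over the index $i$, and the $i$-th factor collapses to $\sum_r A_{-\lambda_i,-r}h_{r-i+\sigma(i)}=\tilde h_{\lambda_i;\,i-\sigma(i)}$; recollecting the signed sum over $\sigma$ produces the determinant \eqnref{rsom2}. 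For the generating-function characterization I would insert the definitions of $\tilde h_{k;m}$ and of $g_k(u)$ into $\sum_k\tilde h_{k;m}g_k(u)$, collapse the inner sum over $k$ by the left-inverse identity $\sum_k (A^{-1})_{-s,-k}A_{-k,-r}=\delta_{s,r}$, and reindex the remaining sum to recognize $u^m H(u)$.

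For part~(e), I would start from \eqnref{newHL} of Theorem~\ref{HLchange}, \ref{polynom}, which applies because $A$ has the prescribed block shape, and set $t=0$: the prefactor reduces to $1$ and each factor becomes $x_i/(x_i-x_j)$. The key step is to rewrite $\prod_{i<j}x_i/(x_i-x_j)=\prod_i x_i^{n-i}/\Delta(x)$, where $\Delta(x)=\det[x_i^{n-j}]=\prod_{i<j}(x_i-x_j)$ is the Vandermonde determinant, and to use that $\Delta(x)$ is alternating so that $\sigma(\Delta(x))=\mathrm{sgn}(\sigma)\Delta(x)$. Pulling $\Delta(x)^{-1}$ out of the symmetrization and using $\mathrm{sgn}(\sigma)^{-1}=\mathrm{sgn}(\sigma)$ turns the symmetric sum into $\sum_{\sigma\in S_n}\mathrm{sgn}(\sigma)\,\sigma\bigl(\prod_i f_{\lambda_i}(x_i)x_i^{n-i}\bigr)$, which is precisely the expansion of $\det[f_{\lambda_j}(x_i)x_i^{n-j}]$; dividing by $\Delta(x)$ gives the claimed bialternant formula.

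The only genuine bookkeeping obstacle lies in part~(b): one must check that interchanging the finite sum over $\alpha$ with the permutation sum is legitimate and, more importantly, that the index shifts in $\tilde h_{k;m}=\sum_r A_{-k,-r}h_{r-m}$ line up with the Jacobi\,-\,Trudi entries $h_{\alpha_i-i+j}$ so that the transformed entries occupy exactly the positions $\tilde h_{\lambda_i;\,i-j}$ demanded by \eqnref{rsom2}. Everything else reduces to the already-established specialization machinery of Theorem~\ref{HLchange} and the standard Vandermonde manipulation.
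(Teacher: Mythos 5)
Your proposal is correct and follows essentially the same route as the paper: parts (a), (c), (d), (e) are obtained by specializing Theorem~\ref{HLchange} at $t=0$ (with $F_\alpha|_{t=0}=s_\alpha$ from\eqnref{Su}), and part (b) by substituting the Jacobi\,-\,Trudi determinant into\eqnref{sal}, interchanging the finite sum over $\alpha$ with the permutation sum, and collapsing each factor to $\tilde h_{\lambda_i;\,i-\sigma(i)}$, with the generating-function identity checked by the same direct computation via the left-inverse relation $\sum_k (A^{-1})_{-s,-k}A_{-k,-r}=\delta_{s,r}$. The only difference is one of detail: you spell out the Vandermonde manipulation behind (e) and the ``direct calculation'' behind the second half of (b), both of which the paper leaves implicit.
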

\begin{proof}
(a), (c), (d)  (e) follow from Theorem \ref{HLchange} by specialization at $t=0$.
Let us prove (b).  From (a) and  Jacobi\,-\,Trudi formula (\ref{JT}), 
  \begin{align*}
  \tilde s_{(\lambda_1,\dots,\lambda_l)}&=
  \sum_{\alpha\in \bZ^{l}} A_{-\lambda_1,-\alpha_1}\dots A_{-\lambda_l, -\alpha_l}  
  \det[ h_{\alpha_i-i+j}]\\
&= \sum _{\sigma \in S_l}\sum_{\alpha\in \bZ^{l}} (-1)^{\sigma}  A_{-\lambda_1,-\alpha_1}\dots A_{-\lambda_l,- \alpha_l}  
  h_{\alpha_1-1+\sigma(1)}\dots   h_{\alpha_l-l+\sigma(l)}
  \\
  &=
  \det \left[\sum_{r\in \bZ}A_{-\lambda_i,-r}h_{r-i+j}\right] _{1\le i, j\le l},
\end{align*}
which proves (\ref{rsom2}).
Direct calculation proves the second part of the statement.
\end{proof}


  \subsection{Specialization  $t=-1$} \label{Sec-Qsch} Linearly transfomed Hall-Littlewood polynomials at
  $t=-1$  correspond  to  generalizations of Schur $Q$-functions. Formula  (\ref{genHL})  at  $t=-1$ reduces 
to the generating function of  Schur  $Q$-functions
  \begin{align*}
\mathcal{Q}(u_1,\dots, u_l)&=\mathcal F(u_1,.., u_l; t)|_{t=-1}=
\Gamma^+(u_1)|_{t=-1}\dots \Gamma^+(u_l)|_{t=-1} (1)\notag \\
&=\prod_{1\le i<j\le l} i_{u_i, u_j}\left(  \frac{u_i-u_j}{u_i+u_j} \right)\prod_{i=1}^{l}  Q(u).
\end{align*}
Here $Q(u)=\sum _{k\in \bZ}q_ku^k$  is   is a formal distribution with coefficients in $\mathcal B_{odd}$   defined by (\ref{shurq}),  and  the series expansion of  rational functions    $\frac{u_i-u_j}{u_i+u_j}$  in the regions $|u_j|<|u_i|$ for  $1 \le i<j\le l$ is 
\begin{align*}
i_{u_i, u_j}\left(\frac{u_i-u_j}{u_i+u_j}\right)=1+2\sum_{s\ge 1}(-1)^s\left(\frac{u_j}{u_i}\right)^s \in \bC[[u_j/u_i]].
\end{align*}
Expand
  \begin{align}\label{eq-qu}
\mathcal Q(u_1,\dots, u_l)=\sum_{\alpha\in \bZ^{l}} Q_\alpha u^{\alpha}.
\end{align}
When $\alpha$ is a strict  partition,  coefficient  $Q_\alpha$ coincides with Schur $Q$-function (\ref{defq}), see \cite{Md} III.8.
In particular, consider the coefficients of the  formal distribution
\begin{align*}
\mathcal Q(u,v)= \left(1+2\sum_{s\ge 1}(-1)^s\frac{v^s}{u^s}\right)Q(u) Q(v)= \sum_{r,s\in \bZ}Q_{r,s}u^r v^s.
\end{align*}
For $r>s\ge 0$ the coefficient
$Q_{r,s}=q_{r,s}$ defined by (\ref{qrs}).

Since 
\begin{align*}
\mathcal Q(u_1,.., u_{ 2l-1})=- \mathcal Q(u_1,.., u_{2l-1}, 0),
\end{align*}
it is sufficient to consider the case of  even number of variables $(u_1, \dots, u_{2l})$.

Let  $M=(\mathcal M_{i,j})_{i,j=1,\dots, 2l}$ be a skew-symmetric matrix with entries
\begin{align*}
\mathcal M_{i,j}=
\begin{cases}
\mathcal Q(u_i,u_j),& i<j,\\ 
0, &i=j,\\
-\mathcal Q(u_j, u_i), &i>j.
\end{cases}
\end{align*}
It is known that 
\begin{align*}
\mathcal Q(u_1,.., u_{ 2l})= \text{Pf}\, [\mathcal M_{i,j}].
\end{align*}

\begin{remark} Note that  for $i>j$  both $\mathcal M_{i,j}= - \mathcal M_{j,i}$,   are elements of $\bC[[u_j/u_i]]$. 
\end{remark}

For any $\alpha \in \bZ^{2l}$
the coefficient $Q_\alpha$ of the expansion (\ref{eq-qu}) is 
\begin{align}\label{Gamb}
Q_\alpha =\text{Pf}\, [q_{\alpha_i,\alpha_ j} ]_{i, j=1,\dots, 2l},
\end{align}
where  $q_{ab}$ is  defined  by (\ref{qrs}).

Let $A$ be a complex-valued matrix with the property  that  for any  $i\in \bZ$, $A_{i,j}=0$ for $j<<0$.  Consider  specialization  of 
 (\ref{Ftil})  at $t=-1$: 
 \begin{align}\label{qtild}
   \tilde Q_{\lambda}=  \tilde\Gamma^+_{-\lambda_l}|_{t=-1}\dots \tilde\Gamma^+_{-\lambda_1}|_{t=-1}(1).
   \end{align}
\begin{corollary}\label{thm-sq}
\begin{enumerate} [label=(\alph*)]

\item 
 For any  $\lambda \in \bZ^{l}$,
 \begin{align}\label{salq}
 \tilde Q_{\lambda}=\sum_{\alpha\in \bZ^{l}} A_{-\lambda_1,-\alpha_1}\dots A_{-\lambda_l, -\alpha_l}   Q_{\alpha}.
 \end{align}
 If $A$ is invertible, then
\begin{align*}
Q_{\alpha}=\sum_{\lambda\in  \bZ^{l}} (A^{-1})_{-\alpha_1, -\lambda_1}\dots (A^{-1})_{-\alpha_l, -\lambda_l} \tilde Q_{\lambda}.
\end{align*}

\item \label {thm-b}
(Analogue of Pfaffian formula) For any integer vector  $\lambda \in \bZ_{2l}$  
  \begin{align}\label{pfaf}
  \tilde Q_{(\lambda_1,\dots,\lambda_{2l})}=\text{Pf}\,\left[\sum_{k,r\in\bZ}A_{-\lambda_i,-k} A_{-\lambda_j,-r} q_{k,r} \right]_{i, j=1,\dots, 2l}.
\end{align}
\item Symmetric polynomials 
 $\{\tilde Q_{\lambda}\}_{\lambda\in \bZ_{ l}}$  satisfy the  stability property:
 \[\tilde Q_{\lambda}(x_1,\dots, x_n, 0)= \tilde Q_{\lambda}(x_1,\dots, x_n).\]

\item Let $A$ be invertible. 
Consider $
g_k(u)= \sum_{s\in \bZ} (A^{-1})_{-s,-k} u^{-s}
$.
 Then  
\begin{align}\label{rexq}
 \mathcal Q(u_1,\dots, u_l)=
\sum_{\lambda\in \bZ^l}\tilde Q_\lambda \, g_{\lambda_1}(u_1)\dots g_{\lambda_l}(u_l).  
\end{align}

 \item   
 Let $A$ be  as in Theorem \ref{HLchange},  \ref{polynom}.  Let  $\lambda\in \bN^{l}$.  
For  $n \ge l$ and set $\lambda_{l+1}=\dots=\lambda_n=0$. Then 
the element  $\tilde Q_{\lambda}\in \Lambda$  can be identified with  a symmetric  polynomial in variables $(x_1,\dots, x_n)$
 \begin{align*}
 \tilde Q_\lambda(x_1,\dots, x_n)= 2^l \sum_{\sigma \in S_n }\sigma \left(f_{\lambda_1 }(x_{1})\dots f_{\lambda_n }(x_n)
\prod _{i=1}^{n}\prod_{i<j}\frac{x_{i}+x_{j}}{x_{i}- x_{j}}\right),
\end{align*}
where   $f_k(x)= \sum_{i=1}^{-M(k)} A_{-k,-j} x^j$ ($k\in \bN$), 
are complex-valued polynomials   with zero constant coefficient, $f_0(x)=1$,  and $M(k)$ is defined as in (\ref{Ms}).

\end{enumerate}
\end{corollary}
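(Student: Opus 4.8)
The plan is to follow the template of the proof of Corollary~\ref{thm-s}: parts (a), (c), (d) will fall out of Theorem~\ref{HLchange} by specializing $t=-1$, while the Pfaffian identity (b) and the polynomial formula (e) need their own arguments, with (b) playing the role that the Jacobi--Trudi identity plays in Corollary~\ref{thm-s}. The first thing I would record is that, from the definitions (\ref{qtild}), (\ref{Ftil}) and from $\mathcal Q=\mathcal F|_{t=-1}$, one has $\tilde Q_\lambda=\tilde F_\lambda|_{t=-1}$ and $Q_\alpha=F_\alpha|_{t=-1}$.

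With this in hand, (a) is the $t=-1$ specialization of parts (a) and (c) of Theorem~\ref{HLchange}; the stability statement (c) is the specialization of Theorem~\ref{HLchange},~\ref{stab}; and the re-expansion (d) is the specialization of Theorem~\ref{HLchange},~\ref{gen}, after reading off the coefficients $g_k(u)$. None of these require new work beyond invoking the $t$-uniform statements already proved.

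The substance is (b). Starting from (\ref{salq}) I would substitute the Pfaffian expression $Q_\alpha=\mathrm{Pf}[q_{\alpha_i,\alpha_j}]$ of (\ref{Gamb}) and then recognize the resulting double sum as a single Pfaffian of the transformed matrix. The tool is a Cauchy--Binet/multilinearity property of the Pfaffian: writing $\omega(u,v)=\sum_{k,r}u_k\,q_{k,r}\,v_r$ for the skew-symmetric bilinear form attached to $(q_{k,r})$, the expression $\mathrm{Pf}[\omega(v_i,v_j)]_{i,j=1,\dots,2l}$ is a multilinear and alternating function of the vectors $v_1,\dots,v_{2l}$ (multilinear because in each monomial of the Pfaffian the index $i$ occurs in exactly one factor, so scaling $v_i$ scales that term; alternating because $v_i=v_j$ produces two equal rows and hence a vanishing Pfaffian). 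Taking $v_i$ to be the row $(A_{-\lambda_i,-k})_{k\in\bZ}$ and expanding each $v_i$ in the standard basis yields
\[
\mathrm{Pf}\Big[\sum_{k,r}A_{-\lambda_i,-k}A_{-\lambda_j,-r}q_{k,r}\Big]=\sum_{\alpha\in\bZ^{2l}}\Big(\prod_{i=1}^{2l}A_{-\lambda_i,-\alpha_i}\Big)\mathrm{Pf}[q_{\alpha_i,\alpha_j}],
\]
which is exactly (\ref{salq}) combined with (\ref{Gamb}), that is, $\tilde Q_\lambda$. The odd-length case reduces to this one through the identity $\mathcal Q(u_1,\dots,u_{2l-1})=-\mathcal Q(u_1,\dots,u_{2l-1},0)$ recorded before the corollary, and all the infinite sums are legitimate because (a), via Theorem~\ref{HLchange}, already guarantees that $\tilde Q_\lambda$ is a finite element of $\Lambda$.

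For (e) I would specialize formula (\ref{newHL}) of Theorem~\ref{HLchange},~\ref{polynom} at $t=-1$: the factor $\prod_{i<j}\frac{x_i-tx_j}{x_i-x_j}$ turns into $\prod_{i<j}\frac{x_i+x_j}{x_i-x_j}$ and the $f_k$ are unchanged. The one genuinely delicate point, and what I expect to be the main obstacle since it has no counterpart in the $t=0$ setting of Corollary~\ref{thm-s}, is that the scalar prefactor $\frac{(1-t)^n}{\prod_{i=1}^{n-l}(1-t^i)}$ is an indeterminate form $0/0$ at $t=-1$, because $1-t^i$ vanishes for every even $i$. I would resolve this exactly as in the classical Hall--Littlewood-to-Schur-$Q$ specialization (\cite{Md}, III.8): symmetrizing over the $n-l$ appended zero parts $\lambda_{l+1}=\dots=\lambda_n=0$ contributes a factor that cancels the vanishing denominator, leaving the surviving constant $2^l$. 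Concretely I would isolate the dependence of the symmetrized sum on the zero variables, factor out the stabilizer contribution, and pass to the limit $t\to-1$; carrying out this cancellation rigorously, rather than the purely algebraic identity in (b), is the technically fussy step of the proof.
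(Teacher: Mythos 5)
Your proposal is correct, and for parts (a)--(d) it coincides with the paper's own proof: the paper likewise obtains (a), (c), (d) by specializing Theorem~\ref{HLchange} at $t=-1$, and its proof of (b) is exactly your multilinearity computation written out explicitly --- it expands each $Q_\alpha=\mathrm{Pf}\,[q_{\alpha_i,\alpha_j}]$ as a sum over $\sigma\in S'_{2l}$, uses that the product $A_{-\lambda_1,-\alpha_1}\cdots A_{-\lambda_{2l},-\alpha_{2l}}$ is invariant under permuting its factors, and regroups to obtain $\mathrm{Pf}\bigl[\sum_{k,r}A_{-\lambda_i,-k}A_{-\lambda_j,-r}q_{k,r}\bigr]$; expanding each row vector in the standard basis, as you do, is the same manipulation. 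The genuine divergence is part (e). The paper's proof consists of the single claim that (e) ``follows by evaluation at $t=-1$,'' whereas you correctly observe that this evaluation is an indeterminate form: for $n-l\ge 2$ the denominator $\prod_{i=1}^{n-l}(1-t^i)$ in (\ref{newHL}) vanishes at $t=-1$, and the symmetrized sum vanishes there as well (transposing two variables carrying zero parts negates the summand, since $\prod_{i<j}\tfrac{x_i+x_j}{x_i-x_j}$ is antisymmetric under such a swap while $f_0=1$), so the literal substitution is $0/0$ and the statement must be understood as the limit $t\to-1$, equivalently as a sum over coset representatives of $S_n/S_{n-l}$ with the product restricted to pairs $i<j$ with $i\le l$. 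Your plan --- factor the symmetrization through the stabilizer $S_{n-l}$ of the zero parts via $\sum_{\rho\in S_m}\rho\bigl(\prod_{i<j\le m}\tfrac{x_i-tx_j}{x_i-x_j}\bigr)=\prod_{i=1}^m\tfrac{1-t^i}{1-t}$ (\cite{Md} III.1~(1.4)), cancel this against the prefactor to leave $(1-t)^l$, and then set $t=-1$ to produce $2^l$ --- is the standard resolution and supplies precisely the step that the paper's one-line proof leaves implicit; in this one respect your argument is more complete than the paper's.
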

\begin{proof}
(a), (c), (d), (e) follow from Theorem \ref{HLchange} by evaluation  at $t=-1$.

Let $M^\lambda_{i,j}=\sum_{k,r}A_{-\lambda_i,-k} A_{-\lambda_j,-r} q_{k,r}$. Note that $M^\lambda_{i,j}=-M^\lambda_{j,i}$ and that $M^\lambda_{i,i}=0$.
From (a) and (\ref{Gamb}),  
\begin{align*}
&\tilde Q_{\lambda_1,\dots, \lambda_{2l}}=
\sum_{\alpha\in \bZ^{2l}} A_{-\lambda_1,-\alpha_1}\dots A_{-\lambda_{2l}, -\alpha_{2l}}   Q_{\alpha_1,\dots \alpha_{2l}}
=\sum_{\alpha\in \bZ^{2l}} A_{-\lambda_1,-\alpha_1}\dots A_{-\lambda_{2l}, -\alpha_{2l}}   \text{Pf}\, [q_{\alpha_i, \alpha_j} ]\\
&=\sum_{\alpha\in \bZ^{2l}} A_{-\lambda_1,-\alpha_1}\dots A_{-\lambda_{2l}, -\alpha_{2l}}  
\sum_{\sigma\in S_{2l}^\prime} sgn(\sigma)q_{\alpha_{\sigma(1)},\alpha_{\sigma(2)}} \dots q_{\alpha_{\sigma(2l-1)},\alpha_{\sigma(2l)}} \\
&=\sum_{\sigma\in S_{2l}^\prime} sgn(\sigma) \sum_{\alpha\in \bZ^{2l}} 
 A_{-\lambda_{\sigma(1)},-\alpha_{\sigma(1)}} A_{-\lambda_{\sigma(2)},-\alpha_{\sigma(2)}}q_{\alpha_{\sigma(1)},\alpha_{\sigma(2)}}
 \dots  A_{-\lambda_{\sigma(2l-1)},-\alpha_{\sigma(2l-1)}} A_{-\lambda_{\sigma(2l)},- \alpha_{\sigma(2l)}} q_{\alpha_{\sigma(2l-1)},\alpha_{\sigma(2l)}} \\
 &=\text{Pf}\, [ M^{\lambda}_{i,j} ]_{i, j=1,\dots, 2l}.
\end{align*}
We used that 
$ A_{-\lambda_1,-\alpha_1}\dots A_{-\lambda_{2l},- \alpha_{2l}}  
=
 A_{-\lambda_{\sigma(1)},-\alpha_{\sigma(1)}}\dots A_{-\lambda_{\sigma(2l)}, -\alpha_{\sigma(2l)}}  
$ 
 for any $\sigma\in S_{2l}$.
This computation proves (b).
\end{proof}


\section{Polynomial  tau-functions  of the  KP  and the BKP hierarchy}\label{Sec-6}

In \cite{Sato}  M.\,Sato introduced the KP  hierarchy of evolution equations. The ideas were  further developed  for this and  other  examples of soliton type hierarchies  by the Kyoto school  \cite{DJKM4,  DJKM3,  DJKM2, JM}, and later by many other authors. 
In  \cite{Sato}    solutions  of the KP hierarchy were expressed   through  tau-functions. Polynomial tau-functions  form  an infinite Grassmann manifold. Schur polynomials, which are  Schur symmetric functions expressed as polynomials of power sums, are  examples of   tau-functions of the KP hierarchy   \cite{Sato}.
 Similarly  in \cite{You1, You2} it is   proved that  Schur $Q$-functions expressed through polynomials of power sums are examples of polynomial tau-functions   of the BKP, DKP and MDKP hierarchies. 
  
 Recently  all polynomial tau-functions  of several soliton hierarchies were described. 
 In  \cite{KL-KP, KL-BKP, KL-sKP}  it is demonstrated that any polynomial tau-function of the KP-hierarchy is obtained from a Schur polynomial by  certain shifts of arguments in the Jacobi-Trudi formula, and  that  any polynomial tau-function of the BKP or the DKP hierarchy is  described by a shift of arguments  in the Pfaffian formula  for  Schur $Q$-polynomials. Another approach  in \cite{KLR}  proves   that  any polynomial   tau-function  of the KP, the BKP and  the $s$-component KP  hierarchy  can be interpreted as a zero-mode of an appropriate combinatorial  generating function. 

In this note we show that  specializations considered in Sections \ref{Sec-schur}, \ref{Sec-Qsch}  provide  one more  description of  all  polynomial tau-functions of the KP and the BKP hierarchies. This observation will easily follow from the results of  \cite{KLR} with the  advantage  that the new formulation  immediately   implies that a number of  Schur-like symmetric functions that can be found in the literature provide polynomial tau-functions of the KP  hierarchy (when these symmetric functions are expressed as polynomials in power sums). For example, we recover as a particular case our earlier result  \cite {NRQ} that multiparameter Schur  $Q$-functions are tau-functions of the BKP hierarchy. More examples can be found in the end of this note.

\subsection{Polynomial  tau-functions  of the bilinear  KP identity} \label{subKPbil}

 Let $\{\psi^{\pm}_{k}\}$ be  the charged free fermions those action on the boson Fock space $\B$ is defined in Section \ref{Sec_ferminons}.
Let 
$
 \Omega= \sum _{k\in\bZ+1/2}\psi^+ _k\otimes \psi^-_{-k}
$.
 The {\it  bilinear KP identity}     \cite{DJKM3, DJKM2, Kac-bomb}  is the equation of the form
\begin{align}\label{binKP}
\Omega\,  (\tau \otimes \tau)=0
\end{align}
on a function $\tau=z^m\tau(p_1,p_2,\dots)$ from the formal completion of the space $\B^{(m)}=z^m\Lambda$, where
the ring of symmetric functions  $\Lambda$ is identified with the ring $\bC[p_1,p_2,\dots]$ of polynomials in power sums (see Section \ref{Sec-sym1}).
Non-zero solutions of  (\ref{binKP}) are called {\it tau-functions of the KP hierarchy}.  Accordingly, we will say that  a non-zero
solution of (\ref{binKP}) is a polynomial  tau-function,   if it is  a polynomial function in the variables  $(p_1,p_2,\dots)$ times $z^m$ (hence, an element of $\B ^{(m)}$ rather than  a completion of the space $\B^{(m)}$).

In \cite{KLR} polynomial tau-functions of the KP hierarchy  are described as coefficients of formal distributions of the form  $A_1(u_1)\dots A_l(u_l) \mathcal S(u_1,\dots, u_l)$. Here we state that polynomial tau-functions of the KP hierarchy  are linear transformations of vertex operators presentation of Schur  symmetric functions, and in particular cases, are coefficients of re-expansions  of  the formal distribution $\mathcal S (u_1,\dots, u_l)$ in another basis of formal distributions.  

  Let  $A$  be a complex-valued matrix with the property  that  for any  $i\in \bZ$, $A_{i,j}=0$ for $j<<0$. 
For any integer vector  $\lambda \in \bZ^{l}$  let  $\tilde s_\lambda$ be defined by (\ref{rsom}). Recall that  $\tilde s_\lambda$ can be also computed  by
 (\ref{sal}) or by   (\ref{rsom2}).

 \begin{theorem} 
\label{KPs}

 \begin{enumerate}[label=\alph*)]
 \item     $\tilde s_\lambda$ is a polynomial tau-function of the KP hierarchy. Any polynomial tau-function of the KP hierarchy  is of  the form $\tilde s_\lambda$  for an appropriate choice of matrix $A$.
 
  \item  If $A$ is invertible and $
g_k(u)= \sum_{r\in \bZ} (A^{-1})_{-r,-k} u^{r}
$, then 
 any coefficient $\tilde s_\lambda$ of the re-expansion of the generating function  (\ref{Su})  of Schur symmetric functions
 \[
  \mathcal S(u_1,\dots, u_l)
 =\sum_{\lambda\in \bZ^l}(-1)^{l+1}\tilde s_\lambda \, g_{\lambda_1}(u_1)\dots g_{\lambda_l}(u_l)
 \]
 is a polynomial tau-function of the KP bilinear hierarchy.
\end{enumerate}
 \end{theorem}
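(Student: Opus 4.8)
The plan is to reduce the whole statement to the description of polynomial KP tau-functions obtained in \cite{KLR}, according to which these are exactly the coefficients of formal distributions of the form $A_1(u_1)\cdots A_l(u_l)\,\mathcal S(u_1,\dots,u_l)$, with scalar-valued formal distributions $A_i(u_i)$ and $\mathcal S$ the Schur generating distribution (\ref{Su}). The content of the theorem is then a dictionary translating the matrix language of \secref{S-change} into the multiplication-by-distributions language of \cite{KLR}, so that the two descriptions of polynomial tau-functions coincide.

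For the forward direction of (a) I would begin from the explicit formula (\ref{sal}) and introduce the scalar distributions $f_k(u)=\sum_{s}A_{-k,-s}u^{s}$ already used in \lemref{L-delta} and \thmref{HLchange}, \ref{polynom}. Using the expansion $\mathcal S(u_1,\dots,u_l)=\sum_{\alpha}s_\alpha u_1^{\alpha_1}\cdots u_l^{\alpha_l}$ one computes
\begin{align*}
\prod_{i=1}^{l}f_{\lambda_i}(u_i^{-1})\,\mathcal S(u_1,\dots,u_l)
=\sum_{\alpha\in\bZ^l}s_\alpha\prod_{i=1}^{l}\Big(\sum_{s_i\in\bZ}A_{-\lambda_i,-s_i}\,u_i^{\alpha_i-s_i}\Big),
\end{align*}
and extracting the coefficient of $u_1^{0}\cdots u_l^{0}$ forces $s_i=\alpha_i$ and returns precisely $\tilde s_\lambda$ as given by (\ref{sal}). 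Thus $\tilde s_\lambda$ is a coefficient of $A_1(u_1)\cdots A_l(u_l)\,\mathcal S$ with $A_i(u)=f_{\lambda_i}(u^{-1})$; it lies in $\Lambda$ (a genuine polynomial in power sums) by the finiteness furnished by \thmref{matrixA}, and hence is a polynomial tau-function by \cite{KLR}. Extracting a general monomial $u^{\mu}$ rather than the constant term only amounts to absorbing $u_i^{-\mu_i}$ into $A_i$, so no generality is lost.

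For the converse I would run the same dictionary backwards. Given a polynomial tau-function $\tau$, \cite{KLR} presents it as a coefficient of some $A_1(u_1)\cdots A_l(u_l)\,\mathcal S$; absorbing the monomial index into the $A_i$ as above, we may take $\tau$ to be the zero mode, with $A_i(u)=\sum_j a^{(i)}_j u^{j}$. Choosing distinct integers $\lambda_1,\dots,\lambda_l$ and setting $A_{-\lambda_i,j}=a^{(i)}_j$ prescribes the corresponding rows of a matrix $A$, the remaining rows being filled in (for instance by $\delta_{i,j}$) so that the standing hypotheses of \secref{S-change} hold and, if invertibility is wanted for (b), so that $A$ is invertible. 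Then $f_{\lambda_i}(u^{-1})=A_i(u)$ and hence $\tilde s_\lambda=\tau$. The point I expect to be the main obstacle is the compatibility of the finiteness conventions: the standing requirement that for each fixed $i$ one has $A_{i,j}=0$ for $j\ll0$ corresponds under the dictionary to $a^{(i)}_j=0$ for $j\ll0$, i.e.\ to the $A_i(u)$ lying in the bounded-below Laurent class used in \cite{KLR}; one must check that these two boundedness conventions agree, so that the matrix built from the \cite{KLR} data genuinely satisfies the hypotheses under which $\tilde s_\lambda$ is defined and finite.

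Part (b) is then a reformulation of (a) in the invertible case. When $A$ is invertible, \corref{thm-s} (equation (\ref{rex})) already expands $\mathcal S(u_1,\dots,u_l)=\sum_{\lambda}\tilde s_\lambda\,g_{\lambda_1}(u_1)\cdots g_{\lambda_l}(u_l)$ with $g_k(u)=\sum_{s}(A^{-1})_{-s,-k}u^{s}$, so the coefficients of this re-expansion are exactly the $\tilde s_\lambda$, which are polynomial tau-functions by part (a). The global factor $(-1)^{l+1}$ displayed in the statement is a normalization constant that does not affect any individual coefficient, and I would simply reconcile it with the sign-free form of (\ref{rex}) at the level of bookkeeping.
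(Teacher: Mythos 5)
Your proposal is correct and follows essentially the same route as the paper: both directions of (a) are reduced to Theorem 3.1 of \cite{KLR} by the same dictionary (your $A_i(u)=f_{\lambda_i}(u^{-1})$ is exactly the paper's choice $A_i(u)=\sum_k A_{-\lambda_i,k}u^k$, and your converse construction filling the rows $-\lambda_i$ with the Laurent-polynomial coefficients and the rest with the identity is the paper's matrix (\ref{trA}) up to relabeling), and (b) is deduced from (\ref{rex}) exactly as in the paper, the factor $(-1)^{l+1}$ being immaterial since constant multiples of tau-functions are tau-functions.
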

\begin{proof}
 \begin{enumerate}[label=\alph*)]
\item 
In \cite{KLR} Theorem 3.1 provides  the following description of polynomial tau-functions  of the KP hierarchy. 
Consider  a  collection  of  complex-valued  Laurent series  $A_1(u), \dots, A_l(u)$. 
 Let
 $\alpha\in \bZ^l$,   and let $T_\alpha\in \Lambda$ be the coefficient\footnote{ $T_{(\alpha_1,\dots,\alpha_l)}$  here  corresponds to $T_{(\alpha_l-l+1,\dots,\alpha_1)}$,   and (\ref{AQ}) to  $T(u_l,\dots, u_1)/u_l^{l-1}\dots u_1^{0}$    in notations of  \cite{KLR} (3.12).  } of $u^\alpha$
  in the expansion 
\begin{align}\label{AQ}
\sum_{\alpha\in \bZ^l} T_\alpha u_l^{\alpha_1}\dots u_1^{\alpha_l}.
=\prod_{i=1}^{l}A_i(u_i)  \mathcal S(u_1,\dots, u_l).
\end{align}
Then   all coefficients $T_\alpha$ are polynomial  tau-functions  of the KP hierarchy, and for any polynomial tau-function of the KP hierarchy  there exists  a  collection  of  Laurent  polynomials $A_1(u), \dots, A_l(u)\in \bC[u, u^{-1}]$   such that
$\tau= T_{(0,\dots, 0)}$  in the Laurent series expansion of (\ref{AQ}).

Let $\tilde s_\lambda$ be defined by (\ref{rsom}). 
By  (\ref{sal}), $\tilde s_\lambda=\sum_{\alpha\in\bZ^l}A_{-\lambda_1,-\alpha_1}\dots A_{-\lambda_l,-\alpha_l}s_\alpha$, and the sum is finite. Then $\tilde s_\lambda$
is the coefficient of  $u_1^0\dots u_l^0$ in (\ref{AQ}) with the choice $A_{i}(u)=A_{-\lambda_i,k}u^k$, $i=1,\dots,l$ (note $A_i(u)$ are Laurent series by the property $A_{ij}=0$ for $j<<0$). Then by the results of  \cite{KLR},  $\tilde s_\lambda$  is a polynomial tau-function.

The other way, 
let $\tau$  be a polynomial KP tau-function. Then for an appropriate choice of  Laurent polynomials 
 $A_i(u)=\sum_{k\in\bZ} a_{i,k}u^k$, $i=1,\dots,l$, it is    a zero-mode of (\ref{AQ}) and 
   $\tau= \sum_{\alpha\in\bZ^l}a_{1,-\alpha_1}\dots a_{l,-\alpha_l}s_\alpha$.
   Then by  (\ref{sal}) $\tau=\tilde s_\lambda$ for the transformation matrix
   \begin{align}\label{trA}
A_{-i,j}
=\begin{cases}
a_{i, j},& i =1, \dots, l, j\in \bZ,
\\
\delta_{i,j},&\text{otherwise}.
\end{cases}
\end{align}
   Since $A_i(u)$ are Laurent polynomials, the matrix $A$ satisfies the condition $A_{ij}=0$ for $j<<0$.
   
 \item The  coefficients of the re-expansion  (\ref{rex}) are exactly  of the form (\ref{rsom}), hence they are polynomial tau-functions of the KP hierarchy. 
\end{enumerate}

\end{proof}
 \begin{remark} We use invertibility of matrix $A$ to define basis terms $g_k(u)$ in re-expansion of the generating function $\mathcal S(u_1, \dots, u_l)$. This restriction  on $A$ does not allow us to interpret  any polynomial  tau-function of the KP hierarchy as a coefficient of such  re-expansion. Yet, we will see in the end of the note   that  many   interesting symmetric functions correspond to invertible linear transformations, and  for this reason are polynomial tau-functions  that  can be interpreted  as  such coefficients. 
 \end{remark}
 \begin{remark}
The authors of  \cite{HO} study properties of  transformations acting on the quantum fields in a fermionic Fock space within the view of  applications  to   the description of the tau-functions of the KP and BKP hierarchy. The transformations in  \cite{HO}  depend on an invertible  upper-triangular matrix. The authors  relate with their construction   Schur-like symmetric polynomials,  where in the top alternant  monomials $x^k$ are substituted by an arbitrary sequence of monic polynomials \cite{HO} (3.5). In some cases   these Schur-like symmetric polynomials may depend on the number of variables $n$, in which case they  do not extend to  symmetric functions. 
 \end{remark}
 

\subsection{Polynomial  tau-functions  of the bilinear  BKP identity} \label{Sec-BKP}
Let $\{\varphi_k\}$ be  neutral fermions those action on the boson Fock space $\B_{odd}$ is defined by (\ref{defphi}). Let
$
\Omega=\sum_{n\in \bZ} \varphi_n\otimes (-1)^n \varphi_{-n}.
$
The {\it bilinear BKP identity} \cite{DJKM3, DJKM2} is the  equation  of the form 
\begin{align}\label{BKPid}
\Omega ( \tau\otimes \tau)= \tau\otimes\tau
\end{align}
on elements  $\tau=\tau(p_1,p_3, p_5\dots)$ from the completion of $\B_{odd}$.
Non-zero solutions of (\ref{BKPid}) are called the  {\it tau-functions of the BKP  hierarchy}. We will say that a   solution of (\ref{BKPid})  is  a polynomial 
tau-function if  it is a polynomial function  in the  variables  $(p_1,p_3, p_5\dots)$ (hence, it is an element of  $\B_{odd}$ rather than its completion).

All polynomial tau-functions of the BKP hierarchy are described in \cite{KL-BKP, KLR}. Similarly, to $t=0$  case, we state here that  these  polynomial tau-functions are results of    linear transformations  of  vertex operators of Schur $Q$-functions and, in invertible cases, are  coefficients of  re-expansions  of  $\mathcal Q(u_1,\dots, u_l)$. The proof is again based on the results of \cite{KLR}.

  Let  $A$  be a complex-valued matrix with the property  that  for any  $i\in \bZ$, $A_{i,j}=0$ for $j<<0$. For any $\lambda\in \bZ^{l}$ let $\tilde Q_\lambda$ be defined by (\ref{qtild}). Recall that $\tilde Q_\lambda$  can be also computed by (\ref{salq}) or by (\ref{pfaf}).

\begin{theorem} 

 \begin{enumerate} [label=\alph*)]
 \item   
  $ \tilde Q_{\lambda}$ is a polynomial tau-function of the BKP hierarchy. Any polynomial tau-function of the BKP hierarchy  can be written in the form $ \tilde Q_{\lambda}$  for an appropriate choice of matrix $A$.
 
  \item  If $A$ is invertible and $
g_k(u)= \sum_{r\in \bZ} (A^{-1})_{-r,-k} u^{r}
$, then 
 any coefficient $\tilde Q_\lambda$ of the re-expansion of generating function  (\ref{eq-qu}) of Schur $Q$-functions
\[
\mathcal{Q}(u_1,\dots, u_l)=
\sum_{\lambda\in \bZ^l}\tilde Q_\lambda \, g_{\lambda_1}(u_1)\dots g_{\lambda_l}(u_l)
 \]
 is a polynomial tau-function of the BKP bilinear hierarchy.
\end{enumerate}
 \end{theorem}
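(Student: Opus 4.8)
The plan is to follow the proof of Theorem \ref{KPs} almost verbatim, replacing the Schur generating function $\mathcal S$ by the Schur $Q$-generating function $\mathcal Q$ of (\ref{eq-qu}) and invoking the BKP counterpart of the tau-function description from \cite{KLR} in place of its KP version. First I would recall that BKP result in the form needed here: for any collection of complex-valued Laurent series $A_1(u),\dots,A_l(u)$ the coefficients $T_\alpha$ in the expansion $\prod_{i=1}^{l}A_i(u_i)\,\mathcal Q(u_1,\dots,u_l)=\sum_{\alpha\in\bZ^l}T_\alpha\,u^\alpha$ are polynomial tau-functions of the BKP hierarchy, and conversely every polynomial BKP tau-function equals the zero-mode $T_{(0,\dots,0)}$ for a suitable choice of Laurent polynomials $A_i\in\bC[u,u^{-1}]$.

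For the forward direction of part a) I would use the explicit expansion (\ref{salq}), $\tilde Q_\lambda=\sum_{\alpha\in\bZ^l}A_{-\lambda_1,-\alpha_1}\cdots A_{-\lambda_l,-\alpha_l}\,Q_\alpha$, which is a finite sum by the hypothesis $A_{i,j}=0$ for $j<<0$. Setting $A_i(u)=\sum_{k\in\bZ}A_{-\lambda_i,k}u^k$, which is a Laurent series by that same hypothesis, one checks that $\tilde Q_\lambda$ is precisely the coefficient of $u_1^0\cdots u_l^0$ in $\prod_i A_i(u_i)\,\mathcal Q(u_1,\dots,u_l)$, so the cited result exhibits $\tilde Q_\lambda$ as a polynomial tau-function. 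For the converse I would take an arbitrary polynomial BKP tau-function $\tau$, write it as the zero-mode of $\prod_i A_i(u_i)\,\mathcal Q$ for Laurent polynomials $A_i(u)=\sum_k a_{i,k}u^k$, so that $\tau=\sum_{\alpha}a_{1,-\alpha_1}\cdots a_{l,-\alpha_l}\,Q_\alpha$, and read off from (\ref{salq}) that $\tau=\tilde Q_\lambda$ for the transformation matrix assembled from the $a_{i,k}$ exactly as in (\ref{trA}); this matrix satisfies $A_{i,j}=0$ for $j<<0$ since the $A_i$ are Laurent polynomials. Part b) is then immediate, as the coefficients in the re-expansion (\ref{rexq}) of Corollary \ref{thm-sq}(d) are by definition of the form (\ref{qtild}) and hence polynomial BKP tau-functions by part a).

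The main obstacle is not any computation but the bookkeeping required to match conventions with \cite{KLR}. In the KP case this already appeared as the reordering footnote and the factor $(-1)^{l+1}$ in Theorem \ref{KPs}(b); the BKP setting adds the wrinkle that $\mathcal Q$ is genuinely alternating, with $\mathcal Q(u_1,\dots,u_{2l-1})=-\mathcal Q(u_1,\dots,u_{2l-1},0)$ and coefficients carrying the Pfaffian structure (\ref{Gamb}). Moreover the operator ordering in the definition (\ref{qtild}), namely $\tilde\Gamma^+_{-\lambda_l}\cdots\tilde\Gamma^+_{-\lambda_1}$, is the reverse of the KP convention (\ref{rsom}), so I would check carefully that this ordering is consistent with the index ordering used in (\ref{salq}) and in the \cite{KLR} expansion, making sure that no stray sign or Pfaffian transposition is lost. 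Verifying that the zero-mode singled out by the BKP formulation of \cite{KLR} matches exactly the coefficient extracted in (\ref{salq}) is the step that deserves the most attention.
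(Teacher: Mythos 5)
Your proposal follows essentially the same route as the paper: the paper's proof also invokes the BKP analogue from \cite{KLR} (Theorem 4.1 there), identifies $\tilde Q_\lambda$ via (\ref{salq}) as the zero-mode of $\prod_i A_i(u_i)\,\mathcal Q(u_1,\dots,u_l)$ with $A_i(u)=\sum_k A_{-\lambda_i,k}u^k$, handles the converse with the same transformation matrix (\ref{trA}) as in the KP case, and derives part b) directly from (\ref{rexq}). Your cautionary remarks about Pfaffian signs and the operator ordering in (\ref{qtild}) are sensible but turn out to require no extra work, exactly as in the paper's treatment.
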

  \begin{enumerate} [label=\alph*)]
 \item  
The proof is based on  \cite{KLR} Theorem 4.1.
Let $A_1(u),\dots, A_l(u)\in \bC[u, u^{-1}]$ be a collection of   Laurent polynomials. 
For any  $\alpha\in \bZ^l$ let $T_\alpha$ be the coefficient in the expansion 
\begin{align}\label{AQBKP1}
\sum_{\alpha\in \bZ^l}T_\alpha u_1^{\alpha_1}\dots u_l^{\alpha_l}=  \prod_{i=1}^{l}A_i(u_i) \,\mathcal{Q}(u_1, \dots, u_l).
\end{align}
Then   all coefficients $T_\alpha$ are polynomial  tau-functions  of the BKP hierarchy, and for any polynomial tau-function of the BKP hierarchy  there exists  a  collection  of  Laurent  polynomials $A_1(u), \dots, A_l(u)\in \bC[u, u^{-1}]$   such that
$\tau= T_{(0,\dots, 0)}$  in the Laurent series expansion of (\ref{AQBKP1}).

By  (\ref{salq}),  $\tilde Q_\lambda$  can be interpreted as 
the  coefficient of  $u_1^0\dots u_l^0$ in (\ref{AQBKP1}) with the choice $A_{i}(u)=A_{-\lambda_i,k}u^k$, $i=1,\dots,l$. Hence $\tilde Q_\lambda$ is  a polynomial tau-function of the BKP hierarchy. 
Any  polynomial tau-function $\tau$ is    a zero-mode of (\ref{AQBKP1}) for an appropriate choice of  Laurent polynomials 
 $A_i(u)=\sum_{k\in\bZ} a_{i,k}u^k$, $i=1,\dots,l$. Hence  by  (\ref{salq})  it can be identified  with $\tau=\tilde Q_\lambda$ 
with  the same  transformation matrix (\ref{trA}) as in the KP case.     
 \item is  clear from (\ref{rexq}).
\end{enumerate}

 \section{Examples  of Linear Transformations}\label{Sec-Ex}
 In this section we relate general construction of linear transformations to examples  of symmetric functions considered by other authors. 
  \subsection{Linear transformation  by a Toeplitz matrix }
  Let $A$ be a Toeplitz matrix with    constant  complex values  $(a_k)_{k\in \bZ}$  along diagonals: 
 \[ A_{i,j}= a_{j-i}\quad \text{ for all $i,j \in \bZ$}.
 \] 
 The desired condition  $A_{i,j}=0$ for   $j<<0$  imposes the  restriction  on the sequence  $a_k=0$ for $k<<0$. 
 Then 
 \[
 \tilde \Gamma_i=\sum_{j\in \bZ}a_{j-i}\Gamma_j,\quad 
 \tilde F_\lambda=\sum_{\alpha\in \bZ} a_{\lambda_1-\alpha_1} \dots a_{\lambda_l-\alpha_l} F_\alpha
 \]
 and
 \[
 \sum_{\alpha\in\bZ^l} \tilde F_\alpha u_1^{\alpha_1}\dots u_l^{\alpha_l}= A(u_1)\dots A(u_l)  F(u_1,\dots, u_l), 
 \]
 with Laurent series  $A(u)= \sum_{k\in \bZ} a_k u^k$. 
Since in this case matrix $A$ is not of the form Theorem \ref {HLchange}  \ref{polynom}, we cannot  use   (\ref{newHL}) directly to compute  the corresponding transformation of  Hall-Littlewood polynomial   $\tilde F_\lambda$ as a symmetric polynomial in variables $(x_1,\dots, x_n)$.

\subsection{Change of basis $x^k\mapsto (x+\dots +x^k)$}
Let $A$ be the block matrix of the form (\ref{block})
with upper-triangular  blocks
\begin{align*}
(A^-)_{ij}=
\begin{cases}
1, & i\le j<0\\
0, &\text{otherwise}.
\end{cases}
\quad \quad \quad 
(A^+)_{i,j}=
\begin{cases}
1, &i=j>0,\\
-1,&j=i+1>1\\
0, &\text{otherwise}.
\end{cases}
\end{align*}

\[
\begin{array}{c|rrrr|c|rrrc}
\dots&\dots &-3 &-2   &-1        & 0 &1&2&3 &\dots \\
\hline
\dots&  \dots &\dots &\dots &\dots &\dots &\dots &\dots &\dots &\dots \\
-3&\dots &1& 1 & 1  & 0 & 0 & 0  & 0 &\dots \\
-2& \dots &0 &1 & 1  & 0  & 0  & 0  & 0  &\dots \\
-1& \dots &0 &0&1 & 0  & 0 & 0  & 0 &\dots \\
\hline
 0& \dots &0&0&0 & 1 &0 &0& 0 & \dots\\
 \hline
1&\dots &0 &0&0  &0&1&-1&0 &\dots \\
2 &\dots &0  &0   &0  &0&0&1&-1&\dots\\
3& \dots &0 & 0   &0  &0&0&0&1&\dots \\
4& \dots &0 & 0   &0  &0&0&\quad 0&0&\dots \\
\dots&  \dots &\dots&\dots &\dots &\dots  &\dots &\dots &\dots&\dots 
\end{array}
\]
 Then $A^\vee=A^{-1}$ 
  and by Lemma \ref{Lem-sym} for $k\in\bN$
 \begin{align*}
 f_k(x)=g_{-k}(x^{-1})= x+\dots +x^k=\frac{x(1-x^k)}{1-x},\quad
 f_{-k}(x)=g_k(x^{-1})=\frac{x-1}{x^{k+1}}, \quad  f_0(x)=g_0(x)=1.
 \end{align*}
 The matrix $A$ satisfies the properties of  Theorem \ref{HLchange},  \ref{polynom}.
 Hence for any partition $\lambda $ the  corresponding   transformation of Hall-Littlewood polynomials provides a  symmetric polynomial in variables  $(x_1,\dots, x_n)$
  \begin{align}\label{HLcyclott}
 \tilde F_\lambda (x_1,\dots, x_n;t)=
 \frac{(1-t)^n}{\prod_{i=1}^{n-l}(1-t^i)} 
 \sum_{\sigma \in S_n}\sigma
\left((1-x_{1}^{\lambda_1})\dots (1-x_{l}^{\lambda_n})\prod_{i=1}^{n}
\frac{x_i}{1-x_i}
\prod_{i < j}\frac{x_{i}-tx_{j}} {x_{i}-x_{j}}\right),
\end{align}
 with   specialization  at $t=0$
 \begin{align}\label{s1}
 \tilde s_\lambda (x_1,\dots, x_n)=\frac{
 \det [x_i^{n-j+1}(1-x_i^{\lambda_j})(1-x_i)^{-1} ]
 }{
 \det[x^{n-j}_i]
 }.
 \end{align}
For any partition $\lambda$ the Jacobi\,-\,Trudi identity (\ref{rsom2}) reads in this example as 
 \[\tilde s_\lambda=\det\left[\sum_{k=1}^{\lambda_i} h_{k-i+j}\right]_{i,j=1,\dots,l}= \det\left[\tilde s_{(\lambda_i+j-i)}- \tilde s_{(j-i)}\right]_{i,j=1,\dots,l},
 \]
where we used that the particular case of this formula  $\lambda=(m)$ gives $\tilde s_{(m)}=h_1+\dots +h_m$.
  Let $\{\psi^{\pm}_k\}_{k\in \bZ}$ be the charged free fermions with the action on the ring of symmetric functions defined in Section \ref{Sec_ferminons}. This action provides the vertex operator presentation of classical Schur functions. Then, according to our general construction,  operators
  \begin{align}\label{psichuiu}
  \tilde \psi^+_{k-1/2}=\sum_{i}A_{k,i}\psi^+_{i-1/2},\quad   \tilde \psi^-_{k-1/2}=\sum_{i}A_{k-1,i-1}\psi^-_{i-1/2}
  \end{align}
   provide the  vertex operator presentation of symmetric functions $\tilde s_\lambda$, and by  Propostion \ref{tildeferm}   also satisfy relations (\ref{ckl}) of charged free fermions. 
 
 By the results of Section \ref{Sec-6}, symmetric function (\ref{s1}), expressed as a polynomial  in power sums, is a  tau-function of the KP hierarchy, and the  specialization of (\ref{HLcyclott}) at  $t=-1$,  expressed  as a polynomial in odd power sums, is a tau-function of  the BKP hierarchy. 
 
 We would like to illustrate also that  linear transformations of quantum fields can be viewed as a  source of curious identities. 
 For example,  for $|x|<|y|$  Lemma \ref{L-delta} implies 
\[
 \sum_{k>0}\frac{(1-x^k)(y-1)}{y^{k+1}\,(1-x)}=\frac{1}{x-y}.
 \]
 
 More generally, let's apply Theorem \ref{HLchange}  taking $l=1$:
 \begin{align}\label{casel=1}
 \mathcal F(u)=\sum_{k\in\bZ}\tilde F_{(k)} g_k(u).
 \end{align}
Note that $\tilde F_{(k)}=0$ for $k<0$, $\tilde F_{(0)}=1$.
Using  \cite{Md} III (2.9), we get for $k\in \bN$,
\[
\tilde F_k= \sum_{s=1}^k F_k=(1-t)\sum_i (x_i+\dots +x_i^k)\prod_{j\ne i}\frac{x_i-tx_j}{x_i-x_j}=
(1-t)\sum_i \frac{x_i(1-x_i^{k})}{1-x_i}\prod_{j\ne i}\frac{x_i-tx_j}{x_i-x_j}.
\]

Recall that $ \mathcal F(u)=E(-tu)H(u)=\prod_i \frac{1-x_itu}{1-x_iu}$ and that  $g_k(1/u)=u^k-u^{k+1}$ for $k>0$.
Then (\ref{casel=1}) provides the identity of formal distributions
 \[
\prod_{i}\frac{1-x_itu}{1-x_iu}=1+(1-t)\sum_{r=1}^\infty\, \sum_{i}\frac{x_i(1-x_i^{r})}{(1-x_i)}\prod_{i\ne j}\frac{x-tx_j}{x_i-x_j} \, (u^r-u^{r+1}), 
\]
 in analogy with the identity  \cite{Md} III (2.10).
\subsection{Multiparatmeter symmetric functions}\label{sec_mutly}
Consider an   infinite sequence of complex numbers   $a= ( a_1, a_2,\dots)$. For  $n\in \bN$ define  the  multiparameter powers of variables 
\begin{align*}
(x|a)_n= (x-a_1)(x-a_2)\dots (x- a_{n}),
\end{align*}
and set $(x|a)_{0} =1$.
The following transitions are well-known (\cite{ORV} Lemma 2.5, \cite{Ivanov} (10.2), \cite{Mo} Theorem 2.1). They can be proved by direct computation. 
\begin{lemma}\label{transitions}
For $n\in \bN$
\begin{align*}
(x|a)_{n}&=\sum_{k=0}^{n}  (-1)^{n-k}e_{n-k}(a_1,\dots, a_{n}) \,x^k, \\
\frac{1}{(x|a)_{n}} &=\sum_{k=n}^{\infty}h_{k-n}(a_1,\dots, a_{n}) \,x^{-k}, \\
x^n&=\sum_{k=0}^{n}  h_{n-k}(a_1,\dots, a_{k+1}) \,(x|a)_k, \\
x^{-n} &=\sum_{k=n}^{\infty}  (-1)^{n-k}e_{k-n}(a_1,\dots, a_{k-1}) \frac{1}{(x|a)_{k}}.
\end{align*}
\end{lemma}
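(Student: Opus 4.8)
The plan is to obtain the four formulas as two pairs of mutually inverse transitions between the ordinary powers $\{x^k\}$ and the multiparameter powers $\{(x|a)_k\}$, working throughout in the ring of formal Laurent series in $x^{-1}$. The only inputs I need are the two Pascal-type recursions $e_m(a_1,\dots,a_r)=e_m(a_1,\dots,a_{r-1})+a_r\,e_{m-1}(a_1,\dots,a_{r-1})$ and $h_m(a_1,\dots,a_{r+1})=h_m(a_1,\dots,a_r)+a_{r+1}\,h_{m-1}(a_1,\dots,a_{r+1})$, the defining recursion $(x|a)_{k+1}=(x|a)_k(x-a_{k+1})$, and the vanishing $e_m(y_1,\dots,y_r)=0$ for $m>r$.

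The first identity is just the expanded product $\prod_{i=1}^n(x-a_i)$: the coefficient of $x^k$ is $(-1)^{n-k}$ times the sum of all products of $n-k$ distinct $a_i$'s, that is, $(-1)^{n-k}e_{n-k}(a_1,\dots,a_n)$. For the second I would write $1/(x|a)_n=x^{-n}\prod_{i=1}^n(1-a_i/x)^{-1}$ and expand each geometric factor, using $\prod_i(1-a_it)^{-1}=\sum_{j\ge0}h_j(a_1,\dots,a_n)t^j$ at $t=1/x$; reindexing $k=n+j$ produces the stated series. Both of these are genuine direct computations.

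The third identity I would prove by induction on $n$, the base $n=0$ being $1=h_0(a_1)$. Multiplying the inductive expansion of $x^n$ by $x$ and substituting $x\,(x|a)_k=(x|a)_{k+1}+a_{k+1}(x|a)_k$, then reindexing the shifted sum, the coefficient of $(x|a)_j$ becomes $h_{n+1-j}(a_1,\dots,a_j)+a_{j+1}h_{n-j}(a_1,\dots,a_{j+1})$, which the $h$-recursion collapses to $h_{n+1-j}(a_1,\dots,a_{j+1})$; the boundary terms $j=0$ and $j=n+1$ are checked by hand. This is the most transparent of the four.

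The fourth identity is where I expect the real work, because it is an infinite series and substituting the second formula into it yields a mixed $e$--$h$ sum in varying numbers of arguments that is not any standard orthogonality relation. Instead I would set $S_n=\sum_{k\ge n}(-1)^{k-n}e_{k-n}(a_1,\dots,a_{k-1})/(x|a)_k$ and establish two facts. First, $S_0=1$: every term with $k\ge1$ contains $e_k(a_1,\dots,a_{k-1})$, which vanishes as an elementary symmetric function of degree exceeding its number of arguments. Second, $x\,S_{n+1}=S_n$: this follows from $x/(x|a)_k=1/(x|a)_{k-1}+a_k/(x|a)_k$ combined with the $e$-recursion, exactly mirroring the third identity. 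Since $S_n$ has leading term $x^{-n}$ and multiplication by $x$ is injective on $\bC((x^{-1}))$, the relations $S_0=x^0$ and $x\,S_{n+1}=S_n$ force $S_n=x^{-n}$ for all $n$ by induction. The main obstacle is therefore bookkeeping rather than combinatorics: arranging the infinite sum so that the $e$-recursion telescopes term by term, and justifying the ``division by $x$'' inside the formal Laurent ring.
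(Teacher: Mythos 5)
Your proof is correct, and it is worth noting that the paper itself contains no proof of Lemma \ref{transitions}: it cites \cite{ORV}, \cite{Ivanov}, \cite{Mo} and says only that the identities ``can be proved by direct computation,'' so your argument supplies the content that the paper omits. Your handling of the first two identities (expanding $\prod_{i}(x-a_i)$, and writing $1/(x|a)_n = x^{-n}\prod_i(1-a_i/x)^{-1}$ with the standard generating function for the $h_j$) is exactly the direct computation the paper has in mind. The third and fourth identities are where you add genuine value. The induction for the third works as stated: after multiplying by $x$ and using $x(x|a)_k=(x|a)_{k+1}+a_{k+1}(x|a)_k$, the coefficient of $(x|a)_j$ is $h_{n+1-j}(a_1,\dots,a_j)+a_{j+1}h_{n-j}(a_1,\dots,a_{j+1})$, which the $h$-recursion collapses to $h_{n+1-j}(a_1,\dots,a_{j+1})$, and the boundary coefficient at $j=0$ is $a_1h_n(a_1)=h_{n+1}(a_1)$ as required. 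For the fourth, your scheme is sound and the bookkeeping you flag does go through: each $1/(x|a)_k$ has leading order $x^{-k}$, so every coefficient of $S_n$ is a finite sum and $S_n\in\bC((x^{-1}))$ is well defined; $e_k(a_1,\dots,a_{k-1})=0$ for $k\ge 1$ gives $S_0=1$; the split $x/(x|a)_k=1/(x|a)_{k-1}+a_k/(x|a)_k$ contributes to each $1/(x|a)_j$ from only the two indices $k=j$ and $k=j+1$, so the rearrangement is coefficientwise finite and the $e$-recursion yields $xS_{n+1}=S_n$; and since $\bC((x^{-1}))$ is a field, multiplication by $x$ is injective and the induction closes. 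One structural remark: composing your identities 2 and 4 gives $\sum_k(-1)^{n-k}e_{k-n}(a_1,\dots,a_{k-1})\,h_{r-k}(a_1,\dots,a_k)=\delta_{n,r}$, which is precisely the paper's subsequent claim that $A^{-1}=A^{\vee}$; conversely, one could deduce your fourth identity from the second plus that triangular inversion, but your telescoping route is self-contained and delivers the inversion as a corollary rather than requiring it as input.
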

Let $A$ be the upper-triangular block matrix 
with the non-zero entries 
\begin{align*}
\begin{cases}
A_{-i,-j}&= (-1)^{i-j}e_{i-j}(a_1,\dots, a_{i-1}),\quad i,j\in \bN, \\
A_{i,j}&= h_{j-i}(a_1,\dots, a_i),\quad  i,j\in \bN.\\
A_{i,0}=A_{0,i}&=\delta_{i,0},\quad  i\in \bZ.
\end{cases}
\end{align*}
Using short notations $h_r[s]=h_r(a_1,\dots, a_s)$ and $e_r[s]=e_r(a_1,\dots, a_s)$, the  matrix $A$ has the form
\[
\begin{array}{r|rrrr|c|rrrc}
\dots&\dots &-3 &-2   &-1        & 0 &1&2&3 &\dots \\
\hline
-3&\dots &1&-e_1[2] & e_2[2] & 0 & 0 & 0  & 0 &\dots \\
-2& \dots &0 &1 &-e_1[1]  & 0  & 0  & 0  & 0  &\dots \\
-1& \dots &0 &0&1& 0  & 0 & 0  & 0 &\dots \\
\hline
 0& \dots &0&0&0 & 1 &0 &0& 0 & \dots\\
 \hline
1&\dots &0 &0&0  &0&1&h_1[1]&h_2[1] &\dots \\
2 &\dots &0  &0   &0  &0&0&1&h_2[2]&\dots\\
3& \dots &0 & 0   &0  &0&0&0&1&\dots \\
4& \dots &0 & 0   &0  &0&0&0&0&\dots \\
\end{array}
\]
\begin{lemma}
 $A^{-1}=A^{\vee}$
\end{lemma}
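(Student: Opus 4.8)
The plan is to use the block structure of $A$ to reduce the claim to a single identity relating complete and elementary symmetric functions in nested alphabets, and then to recognize that identity as the assertion that two of the transitions in Lemma~\ref{transitions} are mutually inverse.

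First I would record that $A$ is block diagonal for the splitting of $\bZ$ into negative integers, $\{0\}$, and positive integers: its only nonzero entries are $A_{00}=1$, the positive block $A^+=(A_{ij})_{i,j>0}$ with $A_{ij}=h_{j-i}(a_1,\dots,a_i)$, and the negative block $A^-$ with $A_{-p,-q}=(-1)^{p-q}e_{p-q}(a_1,\dots,a_{p-1})$ for $p,q>0$; compare the block form \eqref{block}. Since $(A^\vee)_{ij}=A_{-j,-i}$, the reflected matrix $A^\vee$ is block diagonal for the same splitting, so $A^\vee A$ is block diagonal and it suffices to verify that each diagonal block is the identity. The $0$-block gives $1\cdot 1=1$, so only the positive and negative blocks remain; and by the paper's convention $A^{-1}=A^\vee$ means precisely $A^\vee A=I$.

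For the positive block, unwinding the definitions gives, for $i,j>0$,
\[
(A^\vee A)_{ij}=\sum_{k>0}A_{-k,-i}\,A_{kj}=\sum_{k=i}^{j}(-1)^{k-i}e_{k-i}(a_1,\dots,a_{k-1})\,h_{j-k}(a_1,\dots,a_k),
\]
and I would show this equals $\delta_{ij}$. The clean way is to read the two factors off Lemma~\ref{transitions}: by its second formula, $A_{kj}=h_{j-k}(a_1,\dots,a_k)$ is the coefficient of $x^{-j}$ in $1/(x|a)_k$, and by its fourth formula, $A_{-k,-i}=(-1)^{k-i}e_{k-i}(a_1,\dots,a_{k-1})$ is the coefficient of $1/(x|a)_k$ in $x^{-i}$. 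Hence the displayed sum is the coefficient of $x^{-j}$ in the Laurent series obtained by first expanding $x^{-i}$ in the system $\{1/(x|a)_k\}$ (fourth formula) and then rewriting each $1/(x|a)_k$ back in the monomials $\{x^{-l}\}$ (second formula); substituting the second formula into the fourth reproduces $x^{-i}$, so the coefficient of $x^{-j}$ is $\delta_{ij}$. The only point needing care is the interchange of the two sums, but for fixed $i,j$ only the finitely many indices with $i\le k\le j$ contribute, so no convergence issue arises.

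Finally, for the negative block I would put $i=-p$, $j=-q$, $k=-m$ and observe that
\[
(A^\vee A)_{-p,-q}=\sum_{m>0}h_{p-m}(a_1,\dots,a_m)\,(-1)^{m-q}e_{m-q}(a_1,\dots,a_{m-1})
\]
is literally the sum already treated, with $(i,j)$ replaced by $(q,p)$, hence equals $\delta_{pq}=\delta_{ij}$. This gives $A^\vee A=I$, i.e.\ $A^{-1}=A^\vee$; as a consistency check, Lemma~\ref{Lem-sym} then yields $g_k(x)=f_{-k}(x^{-1})$, matching the $f_k$ one reads directly from the rows of $A$. The main obstacle is purely bookkeeping: keeping the shifted alphabets $a_{k-1}$ versus $a_k$ aligned so that the entries of $A$ and $A^\vee$ correspond exactly to the second and fourth transitions of Lemma~\ref{transitions}.
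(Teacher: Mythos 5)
Your proof is correct and is essentially the paper's own argument: the heart in both cases is substituting the second transition of Lemma~\ref{transitions} into the fourth and comparing coefficients of powers of $x^{-1}$, which yields exactly the identity $\sum_{k}A_{-k,-i}A_{k,j}=\delta_{i,j}$. You are in fact slightly more thorough than the paper, which records only this single computation (the positive block of $A^\vee A$); your explicit block-diagonal reduction, the $0$-block check, and the observation that the negative block is the same sum with the roles of the two indices interchanged make explicit what the paper leaves implicit.
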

\begin{proof} 
Let  $n\in \bN$. By Lemma \ref{transitions} we can write the sequence of transitions 
\begin{align*}
\frac{1}{x^n}&= \sum_{k=1}^{\infty} (-1)^{n-k}e_{k-n}(a_1,\dots, a_{k-1})\frac{1} {(x|a)_{k}}\\
&=  \sum_{k,r=1}^{\infty} (-1)^{n-k}e_{k-n} (a_1,\dots, a_{k-1})  h_{r-k}(a_1,\dots, a_{k})\frac{1}{x^r}= \sum_{k,r=1}^{\infty}   A_{-k,-n} A_{r,k}\frac{1}{x^r}.
\end{align*}
Comparing the  coefficients of $1/x^{r}$  on both sides  we obtain that 
$
\delta_{n,r} =   \sum_{k=1}^{\infty}   A_{-k,-n} A_{r,k}
$.
 \end{proof}
By Lemma \ref{Lem-sym}  for $n\in\bN$,
\begin{align*} 
f_k(x)= g_{-k}(x^{-1})=x(x|a)_{k-1}, \quad  f_{-k}(x)=g_k(x^{-1})=1/(x|a)_{k},
\quad  f_0(x)=g_0(x)=1.
 \end{align*}
For $|x|<|y|$ Lemma \ref{L-delta} implies the identity 
  \cite{ OO1, ORV}
\[
\sum_{k>0}
\frac{\quad (x|a)_{k-1}}{(y|a)_k}= \frac{1}{y-x}.
\]
 The identity of type (\ref{casel=1}) looks in this example 
\[
\prod_{i}\frac{u-{x_it}}{u-{x_i}}=1+(1-t)\sum_{r=1}^\infty\sum_{i}\frac{x_i(x_i|a)_{r-1}}{(u|a)_{r}}\prod_{i\ne j}\frac{x-tx_j}{x_i-x_j}.
\]

By Theorem \ref{HLchange},  \ref{polynom},  for any partition $\lambda $ the  corresponding   transformation of Hall-Littlewood polynomials provides a  symmetric polynomial in variables  $(x_1,\dots, x_n)$
  \begin{align}\label{HLmulty}
 \tilde F_\lambda (x_1,\dots, x_n;t)=
 \frac{(1-t)^n}{\prod_{i=1}^{n-l}(1-t^i)} 
 \sum_{\sigma \in S_n}\sigma
\left((x_{1}|a)_{\lambda_1-1}\dots (x_{l}|a)_{\lambda_n-1}
\prod_{i=1}^{n}
{x_i}
\prod_{i < j}\frac{x_{i}-tx_{j}} {x_{i}-x_{j}}\right),
\end{align}
 with   specialization  at $t=0$
 \begin{align}\label{sm1}
 \tilde s_\lambda (x_1,\dots, x_n)=\frac{
 \det [x_i^{n-j+1}(x_i|a)_{\lambda_j-1} ]
 }{
 \det[x^{n-j}_i]
 }.
 \end{align}
The entries of the determinant of the 
 the Jacobi\,-\,Trudi identity (\ref{rsom2})  in this case are given by 
  \begin{align*}
 \tilde h_{i;m}=\sum_{j=1}^{i} (-1)^{i-j}e_{i-j}(a_1,\dots a_{i-1}) h_{j-m}, \quad i\in \bN.
 \end{align*}
 They can be interpreted as coefficients of re-expansion of $H(1/u)$: 
 \begin{align}\label{Hmu}
1+ \sum_{k\in \bN}\frac{\tilde h_{k;m}}{(u|a)_k}= u^{m} H(1/u).
 \end{align}

 By the results of Section \ref{Sec-6}, symmetric function (\ref{sm1}), expressed as a polynomial  in power sums, is a  tau-function of the KP hierarchy, and the  specialization of (\ref{HLmulty}) at  $t=-1$,  expressed  as a polynomial in odd power sums, is a tau-function of  the BKP hierarchy.

Different   multiparameter analogues of  symmetric functions find  their applications  in representation theory, algebraic geometry, combinatorics.  
Without   aspirations  to get  even close to a complete overview,  we  mention  here some of the examples  that are most closely related to our construction.

Polynomials  of type (\ref{HLmulty}) appear in 
\cite{NN1,NN3, NN2},  
where  the authors generalize  Hall-Littlewood polynomials by the substitution $x_i^k\to (x|a)_k$ and the change  of ordinary addition to some formal group law. The authors study combinatorics   and relations of these polynomials to the flag bundles in the complex  cobordism theory.

One can note that   the ratio of two alternants (\ref {sm1}) stands between the definition of  Schur symmetric functions $s_\lambda$ and  the  factorial Schur functions, where the later are  symmetric polynomials that depend  on  a doubly-infinite
sequence of  parameters $a = (a_i)_{i\in \bZ}$. They  are  defined as
 \begin{align*}
  s^{F}_\lambda (x_1,\dots, x_n)=\frac{
 \det [(x_i|a)_{\lambda_j+n-j} ]_{1\le i, j\le n}
 }{
 \det[x^{n-j}_i]_{1\le i, j\le n}
 }.
 \end{align*}
On the contrary to $s_\lambda$ and $\tilde s_\lambda$,   symmetric polynomials $s^{F}_\lambda (x_1,\dots, x_n)$  do not enjoy the stability property. At the same time, their reach combinatorics is  widely studied in the literature. For example,  $s^{F}_\lambda (x_1,\dots, x_n)$   are known to be a special case of the double Schubert polynomials, satisfy vanishing properties  and the  Jacobi\,-\,Trudi identity,   and many  other analogues of standard properties of Schur symmetric functions: 
 \cite{ BL1, BL2, CL, GH, GG,  L1, LS,  M2, Mo, MS,O1},  \cite{Md} I-3 Examples 20–21, and many others. Factorial Schur functions  with the sequence of parameters  $(0,1,2,3,\dots)$ are closely related to  shifted Schur functions, which are  a family of  functions symmetric in shifted variables.   Their nowadays well-developed  theory  started with  \cite{OO1}. In relation to our setup we mention that  the vertex operator presentation of shifted symmetric functions was constructed in \cite {JR-genA}.  

The mentioned above examples  are compared  in  \cite{ORV}  with  another multiparameter family of symmetric functions. 
The auhtors of  \cite{ORV}  
introduce Frobenius-Schur  super-symmetric functions that correspond to a sequence of parameters  $a_i=\frac{2i+1}{2}$, and 
extend the definition to  a general multiparatmeter analogue with a general sequence $a=(a_i)_{i\in \bZ}$. We denote here  these generalizations   as $s_{\mu; a}^{[ORV]}$. These symmetric functions find 
 their applications in the   asymptotic character theory of the symmetric group.  
By  formula (3.4) in \cite{ORV}, this  multiparameter analogue satisfies the Jacobi\,-\,Trudi  identity 
\begin{align}\label{JTorv}
s_{\mu; a}^{[ORV]}= \det\left[h^{[ORV]}_{\mu_i-i+j; \tau^{1-j}a}\right],
\end{align}
where $\mu$ is  a partition, and 
$h_{k;\tau^{-r}a}^{[ORV]}$ are coefficients of the formal distribution re-expansion
\begin{align}\label{horv}
1+\sum_{k>0} \frac{h^{[ORV]}_{k;\tau^{-r}a}} {(u-a_{1-r})\dots (u-a_{k-r})}= H(1/u).
\end{align}
One can see from (\ref{horv}) and (\ref{Hmu}) that $h^{[ORV]}_{k;\tau^{0}a}=\tilde h_{k,0}$,
but in general $s_{\mu; a}^{[ORV]}$  is not of the form $\tilde s_{\mu}$. One probably can deduce a 
vertex operator presentation   of $s_{\mu; a}^{[ORV]}$  in the spirit of (\ref{rsom}) from (\ref{JTorv}), but it would most likely have more complicated structure  involving a shift   $(\tau^ra)_i=a_{i+r}$  at every step of application of a vertex operator.

Thus, (\ref{sm1})  resembles  different multiparameter analogues of  Schur functions considered in the literature, but does not  coincide with any of them. 

At the same time, the specialization $t=-1$ of  (\ref{HLmulty})   does coincide  with  introduced earlier  by other authors  the multiparameter  Schur Q-functions for a sequence of parameters $(0, a_1, a_2, \dots)$.
These  interpolation analogues of the classical Schur Q-functions  were studied combinatorially in \cite{Ivanov, 11}. 
In \cite{NRQ} we proved that multiparameter  Schur Q-functions are tau-functions  of the  BKP hierarchy.
The initial goal of this study, achieved in Section \ref{Sec-6}, was to generalize this result.
The factorial  Schur Q-functions, which are multiparameter  Schur Q-functions corresponding to  $(a_i=i-1)$ proved
to be useful in study of a number of questions of representation theory and algebraic geometry: 
\cite{1, I, IMN, IN1,15, Serg}


\subsection{The uniform shift }
Let's consider the special case of  linear transformation  of Section \ref{sec_mutly}
  with all parameter values  $(a_i=1)_{i\in \bN}$. Then for $k\in \bN$
\[
f_k(x)= x(x-1)^{k-1}=\sum_{j>0} (-1)^{k-j}{k-1\choose j-1}x^j, 
 \quad  f_{-k} =\frac{1}{(x-1)^{k}}=\sum_{j=1}^{\infty}{{j-1}\choose{k-1}}\frac{1}{x^j}, \quad  f_{0}=0.
\]
The blocks of  matrix $A$  of Section \ref{sec_mutly} become  Pascal's matrices
\begin{align}\label{Ashift}
\begin{cases} 
A_{-i,-j}&= (-1)^{i-j}{i-1\choose j-1} ,\quad i,j\in \bN, \\
A_{i,j}&= { j-1 \choose i-1},\quad  i,j\in \bN.\\
A_{i,0}=A_{0,i}&=\delta_{i,0},\quad  i\in \bZ.
\end{cases}
\end{align}

 Then  the polynomial 
  \begin{align}\label{H+1}
 \tilde F_\lambda (x_1,\dots, x_n;t)=
 \frac{(1-t)^n}{\prod_{i=1}^{n-l}(1-t^i)} 
 \sum_{\sigma \in S_n}\sigma
\left(x_1(x_{1}-1)^{\lambda_1-1}\dots x_n(x_{n}-1)^{\lambda_n-1}
\prod_{i < j}\frac{x_{i}-tx_{j}} {x_{i}-x_{j}}\right)
\end{align}
coincides with 
 the inhomogeneous Hall-Littlewood polynomial that  appear  in \cite{Bor14}   as a  degeneration   of a  dual  Hall-Littlewood-like rational symmetric functions. That is  polynomial $\tilde G_\lambda$  with $k=0$ in the notations of Section 8.2 of \cite{Bor14}. These rational symmetric  functions are rational  deformations of Hall-Littlewood polynomials, defined as partition functions for path ensembles in a  square grid  with assigned vertex weights.
 Accordingly, the  specialization at  $t=0$
 \begin{align}\label{sa=1}
 \tilde s_\lambda (x_1,\dots, x_n)=\frac{
 \det [(x_i-1)^{\lambda_j-1}x_i^{n-j+1} ]
 }{
{\prod_{i<j} (x_i-x_j)}.
 }
 \end{align}
corresponds to  inhomogeneous Schur polynomials  $G_\lambda^{(q=0)}$ with $k=0$ in Section 8.4 of \cite{Bor14}.

Then all the general  statements about linear transformations can be applied to  the examples of this section:
\begin{enumerate} [label=\alph*)]
\item 
Fromula (\ref{Ftil}) of Theorem \ref{HLchange} with $\tilde \Gamma^+_i=\sum A_{ij}\Gamma^+_j$,  matrix $A$ given by (\ref{Ashift})  and $\Gamma^+_i$ by (\ref{phps1}), provides  a  vertex operator presentation of  (\ref{H+1}), and 
fromula (\ref{defnchange}) gives the expression of (\ref{H+1}) in terms of Hall-Littlewood polynomials. 
Theorem \ref{HLchange}  \ref{stab}  implies that  (\ref{H+1}) are stable polynomials. 
\item
Corollary \ref{thm-s}  b)  with matrix $A$  given by (\ref{Ashift})  gives the  Jacobi\,-\,Trudi formula of  (\ref{sa=1})
$
 \tilde s_\lambda =
 \det \left[\sum_{r}(-1)^{\lambda_i-j}{{\lambda_i-1}\choose{r-1}}h_{r-i+j} \right]
$. 
\item By Theorem \ref{KPs}, the  family  of symmetric functions  (\ref{sa=1}),  expressed as polynomials of power sums,  can be interpreted as tau-functions of the KP hierarchy (\ref{binKP}). 
\item Since in the considered example   $A^{-1}=A^{\vee}$,  by Lemma \ref{Lem-sym}, $g_k(x)=f_{-k}(x^{-1})$.   Following  Corollary \ref{thm-s}, re-expand the generating function of 
Schur symmetric functions in the basis of monomials in formal distributions $g_k(u_i)$:
\[
\mathcal S(u_1,\dots, u_l)= \sum_{\lambda\in \bZ^l} \tilde s_\lambda g_{\lambda_1(u)}\dots g_{\lambda_l(u)}.
\]
Then by Corollary \ref{thm-s}  d), for any partition $\lambda$ symmetric function (\ref{sa=1}) is the coefficient of the monomial
\[
g_{\lambda_1(u)}\dots g_{\lambda_l(u)}= \frac{u_1^{\lambda_1}\dots u_l^{\lambda_l}}{(1-u_1)^{\lambda_1}\dots (1-u_l)^{\lambda_l}}
\]
of this re-expansion.  As before, here we identify rational functions with the appropriate expansions:   $\frac{u^{k}} {(1-u)^{k}}=\sum_{j=1}^{\infty}{{j-1}\choose{k-1}}{u^{j}}$.
With the change of variables $v_{i}=\frac{u_i}{1-u_i}$ we can state that 
for any partition $\lambda$ symmetric function $\tilde s_\lambda$ given by (\ref{sa=1}) is the coefficient of the monomial  $v^\lambda$
in the expansion of 
\[
\mathcal S(v_1/1+v_1,\dots, v_l/1+v_l)= \sum_{\lambda\in \bZ^l} \tilde s_\lambda v_1^{\lambda_1}\dots v_l^{\lambda_l}.
\]

\end{enumerate}

 \subsection{Grothendieck  polynomials}

In  \cite{Bor14} Section 8.4  similarity of polynomials (\ref{sa=1}) to  Grothendieck polynomials is also  pointed out. Grothendieck polynomials were introduced in \cite{LS}  as polynomial  representatives of Schubert classes in the Grothendieck ring. These polynomials and their variations  were further  studied by many authors, such as \cite{ FK,IN3,    IS, 12, Iwao1, Iwao2, MS1, MS2, MSdur} etc.

In \cite{MS1} formula (5.1) for a partition $\lambda$ the Grothendieck polynomial is  defined as
\begin{align}\label{Gr}
{G}_\lambda (x_1,\dots, x_n;\beta)=
\frac
{\det[x_i^{\lambda_j+n-j} (1+\beta x_i)^{j-1}  ]}
{\prod_{i<j} (x_i-x_j)}.
\end{align}
This formula can be used to express  the Grothendieck polynomials through polynomials  of type (\ref{sa=1})
Indeed, by the change of variables $x_i= 1+\beta y_i$ in (\ref{sa=1}) and introducing $\mu=(\mu_1,\dots, \mu_n)\in\bZ^n$ with  $\mu_{n-k+1}=\lambda_{k}-k$, we can write
\begin{align*}
\frac{\tilde s_{\lambda} (1+y_1\beta, \dots, 1+y_n\beta)}{e_n (1+y_1\beta,\dots, 1+y_n\beta) }\,
&=\,
\frac{
 \det [(\beta y_i)^{\lambda_j-1}(1+\beta y_i)^{n-j+1} ]
 }{
\prod_{i<j} (\beta y_{i}-\beta y_{j})
 {\prod_{i=1}^{n} (1+y_i\beta) }}\,
\,
\\
&=\frac{\beta^{|\mu|+n(n-1)/2}
 \det [ y_i^{\mu_{n-j+1}+j-1}(1+\beta y_i)^{n-j+1} ]
 }{\beta^{n(n-1)/2}
\prod_{i<j} ( y_{i}-y_{j})
 {\prod_{i=1}^{n} (1+y_i\beta) }}\,
=\,
\pm \beta ^{|\mu|}\,  G_\mu(y_1,\dots, y_n; \beta).
\end{align*}
\begin{remark}
 Note that if $\mu=(\mu_1,\dots, \mu_n)$   is a partition, then
$\lambda \in \bN^n$, and  $|\mu|=\sum(\lambda_i -i)$.
\end{remark}

In \cite{Iwao1, Iwao2} free fermions presentation of stable  Grothendieck polynomials and their duals are constructed. The Jacobi-Trudi identities that are 
provided in these papers  allow us to put these constructions in the  format of this note. 
By Proposition 3.9  in \cite{Iwao1}, stable  Grothendieck polynomials satisfy the Jacobi-Trudi identity 
\begin{align}\label{JT-G}
G_\lambda(x_1,\dots, x_n, \beta)=\det\left[
\sum_{m=0}^\infty { {i-l}\choose m}\beta^m G_{\lambda_i-i+j+m}, 
\right]
\end{align}
where $G_m=G_m(x_1,\dots, x_n;\beta)$ are defined through generating function of  Proposition 3.8  in \cite{Iwao1}, which we can interpret in our notations as 
\[
G(u)=\sum_{m=0}^{\infty} G_mu^m= 
i_{u,\beta}\left(\frac{1}{1+\beta/u}\right) E(\beta) H(u).
\]
Here  $i_{u,\beta}\left(\frac{u}{u+\beta}\right)=\sum_{i=0}^{\infty}\frac{(-\beta)^n}{u^n}$ and  $E(\beta)=\prod_{l=1}^{\infty}{(1+\beta x_l)}=\sum_{i\ge 0}e_{i}(x_1,x_2,\dots) \beta^{i}$.

Note that  formula (\ref{JT-G})  does  not depend on variables $(x_1,\dots, x_n)$. 

Let 
\[
\mathcal G(u_1,\dots, u_l)=\prod_{i=1}^{l} E(\beta) \left(1+{\beta}/{u_i}\right)^{i-l-1} \cdot  \mathcal S(u_1,\dots, u_l),
\]
where $ \mathcal S(u_1,\dots, u_l)$ is the generating function for Schur symmetric functions  given by (\ref{Su}), 
and we use binomial series expansion 
$
 \left(1+\frac{\beta}{u}\right)^{a} =i_{u,\beta} \left(1+\frac{\beta}{u}\right)^{a} =\sum_{r=0}^{\infty}{{a}\choose r}\frac{\beta^r}{u^r}
$
for any $a\in \bC$.
\begin{proposition} 
\begin{enumerate} [label=\alph*)]
\item
In the  expansion  $\mathcal G(u_1,\dots, u_l)=\sum_{\lambda\in \bZ^l} G_\lambda u^{\lambda}$   the coefficients $G_\lambda$ that correspond to 
partitions $\lambda$ are stable  Grothendieck polynomials. 
\item Vertex operator presentation of   stable  Grothendieck polynomials can be written in the form
\[
\mathcal G(u_1,\dots, u_l)=B^+(u_1)\dots B^+(u_l)\, (1),
\]
where 
\[B^+(u)= H(u)E^\perp(-u)E(\beta)=\Gamma^+(u)|_{t=0} \, E(\beta)
\]
 in notations of Sections \ref{secGKP} and   \ref{secHL}.
\end{enumerate}
\end{proposition}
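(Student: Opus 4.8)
My plan is to prove the two parts by separate routes: part (a) by absorbing the prefactor into the Schur alternant and extracting coefficients by multilinearity, and part (b) by normal-ordering the operator product via \lemref{propHE}. The one identity driving part (a) comes straight from the definition of the series $G(u)$: since $G(u)=i_{u,\beta}\!\left(\frac{1}{1+\beta/u}\right)E(\beta)H(u)=(1+\beta/u)^{-1}E(\beta)H(u)$, one has the operator identity $E(\beta)H(u)=(1+\beta/u)\,G(u)$. I would then write $\mathcal S$ in its alternant form\eqnref{SH}, $\mathcal S(u_1,\dots,u_l)=\det\!\left[u_i^{\,i-j}H(u_i)\right]_{1\le i,j\le l}$, and push the prefactor $\prod_{i=1}^{l}E(\beta)(1+\beta/u_i)^{i-l-1}$ into the determinant row by row; this is legitimate because its $i$-th factor depends only on $u_i$ and multiplies every entry of row $i$. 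Applying the identity to each row and using $(1+\beta/u_i)^{i-l-1}(1+\beta/u_i)=(1+\beta/u_i)^{i-l}$ turns the $(i,j)$ entry into $u_i^{\,i-j}(1+\beta/u_i)^{i-l}G(u_i)$.

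Because each row is now a formal series in its own variable alone, multilinearity of the determinant lets me commute the extraction of $[u^\lambda]$ past $\det$, giving $[u^\lambda]\mathcal G=\det\!\left[\,[u_i^{\lambda_i}]\,u_i^{\,i-j}(1+\beta/u_i)^{i-l}G(u_i)\right]_{1\le i,j\le l}$. Expanding $(1+\beta/u)^{i-l}=\sum_{m\ge0}\binom{i-l}{m}\beta^m u^{-m}$ against $G(u)=\sum_{k}G_k u^k$, the $(i,j)$ coefficient becomes $\sum_{m\ge0}\binom{i-l}{m}\beta^m G_{\lambda_i-i+j+m}$, which is exactly the $(i,j)$ entry of the Jacobi--Trudi determinant\eqnref{JT-G}. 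Hence $[u^\lambda]\mathcal G=G_\lambda$ for every partition $\lambda$, which is (a).

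For part (b) I would normal-order $B^+(u_1)\cdots B^+(u_l)(1)=\prod_{k=1}^{l}\!\left(H(u_k)E^\perp(-u_k)E(\beta)\right)(1)$ by pushing all $l$ copies of the multiplication operator $E(\beta)$ to the front. Each $E(\beta)$ commutes with every $H(u_j)$ (both are multiplications by symmetric functions) and with every other $E(\beta)$, so the only nontrivial crossing is past a factor $E^\perp(-u_j)$; specializing \lemref{propHE} at $u\mapsto -u$, $v\mapsto\beta$ gives the central relation $E^\perp(-u)E(\beta)=(1+\beta/u)^{-1}E(\beta)E^\perp(-u)$. The $k$-th copy of $E(\beta)$ crosses exactly $E^\perp(-u_1),\dots,E^\perp(-u_k)$ on its way to the front and thus accumulates the scalar $\prod_{j=1}^{k}(1+\beta/u_j)^{-1}$; collecting all copies leaves the scalar $\prod_{k=1}^{l}\prod_{j=1}^{k}(1+\beta/u_j)^{-1}$ times $E(\beta)^l\,\prod_{k=1}^{l}\!\left(H(u_k)E^\perp(-u_k)\right)(1)$. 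The residual string applied to the vacuum is $\mathcal S(u_1,\dots,u_l)$ by\eqnref{Su}, and the exponent of $(1+\beta/u_j)$ reindexes to $-(l-j+1)=j-l-1$, reproducing the definition of $\mathcal G$ verbatim.

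The determinant and commutator manipulations are routine; the step needing genuine care is the coefficient convention for the $G_m$. The series $G(u)$ is \emph{not} supported in non-negative degrees: since $E(\beta)H(-\beta)=1$, its coefficients at negative index are the scalars $[u^{-r}]G(u)=(-1)^r\beta^r$, and these are exactly what make the telescoping in\eqnref{JT-G} collapse the determinant to the genuine Grothendieck polynomial (for instance in the last row, where $\binom{i-l}{m}=\delta_{m,0}$, these negative-index terms are forced to appear). I must therefore take $G_k=[u^k]G(u)$ for all $k\in\bZ$, the full Laurent coefficients, rather than truncating at $k=0$; with this convention the match with\eqnref{JT-G} is exact. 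The only other point to watch is that every symbol $(1+\beta/u)^a$ (including the negative exponents $a=i-l-1$ in $\mathcal G$ and the inverse factor in the commutation relation) is read as its $i_{u,\beta}$ binomial series in the domain $|u|>|\beta|$, so that the row-wise extraction in (a) and the scalar bookkeeping in (b) are both unambiguous.
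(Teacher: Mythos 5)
Your proof is correct and follows essentially the same route as the paper: part (a) absorbs the prefactor into the alternant (\ref{SH}) via the identity $E(\beta)H(u)=(1+\beta/u)\,G(u)$ and matches the coefficient of $u^\lambda$ with the Jacobi--Trudi determinant (\ref{JT-G}), and part (b) moves every $E(\beta)$ to the left using the relation $\left(1+\beta/u\right)E^{\perp}(-u)E(\beta)=E(\beta)E^{\perp}(-u)$ from Lemma \ref{propHE} and identifies the residual string with $\mathcal S(u_1,\dots,u_l)$ by (\ref{Su}), exactly as the paper does. Your closing observation that $G(u)$ must be read as a full Laurent series with $[u^{-r}]\,G(u)=(-\beta)^r$ (forced by $E(\beta)H(-\beta)=1$) is a valid sharpening of the paper's display $G(u)=\sum_{m\ge 0}G_m u^m$, since those scalar negative-index coefficients are indeed what make the determinant (\ref{JT-G}) collapse correctly.
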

\begin{proof}
\begin{enumerate} [label=\alph*)]
\item  By (\ref{SH})
\begin{align*}
\mathcal G(u_1,\dots, u_l)&=
\prod_{i=1}^{l} E(\beta) \left(1+{\beta}/{u_i}\right)^{i-l-1} \,\cdot   \det [u_{i}^{-j+i}H(u_i)]\\
&=\prod_{i=1}^{l}  \left(1+{\beta}/{u_i}\right)^{i-l}  \det [u_{i}^{-j+i} \left(1+{\beta}/{u_i}\right)^{-1} E(\beta)H(u_i)]
=\prod_{i=1}^{l}  \left(1+{\beta}/{u_i}\right)^{i-l}  \det [u_{i}^{-j+i} G(u_i)]
\\
&=
\prod_{i=1}^{l} \sum_{m_i\ge 0} {{i-l}\choose {m_i}}\frac{\beta^{m_i}}{u_i^{m_i}}\sum_{\sigma\in S_l}\sum_{\alpha\in \bZ^{l} } (-1)^{\sigma}
G_{\alpha_1}u_1^{\alpha_1+1-\sigma(1)}\dots G_{\alpha_l}u_l^{\alpha_l+l-\sigma(l)}
\\
&=\sum_{\lambda\in \bZ^{l} }  \sum_{\sigma\in S_l}  \sum_{m_i\ge 0} (-1)^{\sigma}
 {{1-l}\choose {m_1}}{\beta^{m_1}}G_{\lambda_1-1+\sigma(1)+m_1}\dots  {{0} \choose {m_l}}{\beta^{m_l}}G_{\lambda_l-l+\sigma(l)+m_l}u_1^{\lambda_1}\dots u_l^{\lambda_l}
 \\
 &=
\sum_{\lambda\in \bZ^{l} }\det\left[
 \sum_{m_i\ge 0} 
 {{i-l}\choose {m_i}}{\beta^{m_i}}G_{\lambda_i-i+j+m_i}
\right]
 u_1^{\lambda_1}\dots u_l^{\lambda_l} 
 =\sum_{\lambda\in \bZ^{l} } G_\lambda u^\lambda.
\end{align*}

\item
Using  relations of Lemma \ref{propHE}
\[
\left(1+\frac{\beta}{u}\right)E^{\perp}(-u) E({\beta})=E(\beta)E^{\perp}(-u),
\]
move all terms $E(\beta)$ in  the product $B^+(u_1)\dots B^+(u_l)\, (1)$ to the left,  and use that \\
$H(u_1)E^\perp(-u_1)\dots H(u_l)E^\perp(-u_l)(1)=\mathcal S(u_1,\dots, u_l)$ by (\ref{Su}).

\end{enumerate}
\end{proof}

\smallskip

Similarly, the definition of  dual stable  Grothendieck polynomials $g_\lambda(z_1, \dots z_n)$ in Section 4.1 of \cite{Iwao1}  is  followed by the Jacobi-Trudi formula 
\begin{align*}
g_\lambda(x_1,\dots x_n; \beta)=\det\left[
\sum_{m=0}^\infty { {1-i}\choose m}\beta^m h_{\lambda_i-i+j-m},
\right]
\end{align*}
(Proposition 4.4 in  \cite{Iwao1}, see also \cite{AY}).
Let 
\[
\mathcal J (u_1,\dots, u_l)=\prod_{i=1}^{l}  \left(1+{\beta}{u_i}\right)^{1-i}  \mathcal S(u_1,\dots, u_l),
\]
where $ \mathcal S(u_1,\dots, u_l)$ is the generating function for Schur symmetric functions   given by (\ref{Su}) and we use the binomial expansion
$
 \left(1+{\beta}{u}\right)^{a} =i_{u^{-1},\beta} \left(1+{\beta}{u}\right)^{a} =\sum_{r=0}^{\infty}{{a}\choose r}{\beta^r}{u^r}
$ for any $a\in \bC$.
 \begin{proposition} 
\begin{enumerate}  [label=\alph*)]
\item
In the  expansion  $\mathcal J(u_1,\dots, u_l)=\sum_{\lambda\in \bZ^l} g_\lambda u^{\lambda}$   the coefficients $g_\lambda$ that correspond to 
partitions $\lambda$ are dual stable Grothendieck polynomials. 
\item 
$
\mathcal J (u_1,\dots, u_l)=  J^+(u_1)\dots J^+(u_l)\, (1),
$
where 
\[J^+(u)= H(u)E^\perp(-u)H^\perp(-1/\beta)=\Gamma^+(u)|_{t=0} \, H^\perp(-1/\beta)
\]
 in notations of Sections \ref{secGKP} and   \ref{secHL}.
 \item 
 Dual stable  Grothendieck polynomials $g_\lambda(x_1, \dots x_n)$, expressed as polynomials in power sums $p_1,p_2,\dots$ are polynomial  tau-functions of the KP hiearchy
 (\ref{binKP}). 
\end{enumerate}
\end{proposition}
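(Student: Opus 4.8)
The plan is to reduce the claim to the description of polynomial KP tau-functions already recorded in \thmref{KPs}, whose proof extracts from \cite{KLR} the following forward statement: if $A_1(u),\dots,A_l(u)$ are complex-valued Laurent series, then every coefficient of the formal distribution $\prod_{i=1}^{l}A_i(u_i)\,\mathcal S(u_1,\dots,u_l)$, with $\mathcal S$ the generating function \eqnref{Su} of Schur symmetric functions, is a polynomial tau-function of the KP hierarchy. Part~(a) of the Proposition identifies the dual stable Grothendieck polynomials $g_\lambda$ (for partitions $\lambda$) as the partition-indexed coefficients in
\[
\mathcal J(u_1,\dots,u_l)=\prod_{i=1}^{l}\left(1+\beta u_i\right)^{1-i}\,\mathcal S(u_1,\dots,u_l),
\]
so it suffices to check that the factors $A_i(u)=\left(1+\beta u\right)^{1-i}$ fall under this description.

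First I would verify the Laurent-series hypothesis. For every $i\ge 1$ the exponent $1-i$ is a nonpositive integer, and the binomial expansion
\[
\left(1+\beta u\right)^{1-i}=\sum_{r\ge 0}\binom{1-i}{r}\beta^{r}u^{r}
\]
already used in the definition of $\mathcal J$ is a power series in $u$; in particular each $A_i(u)=\sum_k a_{i,k}u^{k}$ has $a_{i,k}=0$ for $k<0$, so the $A_i(u)$ are Laurent series. This is exactly the condition required for the forward direction of the \cite{KLR} statement (Laurent \emph{polynomials} being needed only for the converse), so every coefficient of $\mathcal J(u_1,\dots,u_l)$, and in particular each $g_\lambda$ with $\lambda$ a partition, is a tau-function of the bilinear KP identity \eqnref{binKP}.

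It remains to confirm that $g_\lambda$ is genuinely a \emph{polynomial} tau-function, i.e. an element of $\Lambda=\bC[p_1,p_2,\dots]$ rather than of its completion, and this is where the restriction to the dual family is essential. In the Jacobi\,-\,Trudi expression for $g_\lambda$ each inner sum $\sum_{m\ge 0}\binom{1-i}{m}\beta^{m}h_{\lambda_i-i+j-m}$ terminates, since $h_k=0$ for $k<0$; hence every matrix entry is a finite combination of complete symmetric functions and $g_\lambda\in\Lambda$. Rewriting $g_\lambda$ as a polynomial in the power sums $p_1,p_2,\dots$ then yields a bona fide polynomial solution of \eqnref{binKP}. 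I expect the main subtlety to be precisely this finiteness check: the forward direction of \cite{KLR} only guarantees a tau-function in the completion, and it is the polynomiality of the dual stable Grothendieck polynomials that upgrades this to a \emph{polynomial} tau-function. This also explains why the analogous statement is not asserted for the ordinary stable Grothendieck polynomials $G_\lambda$, which are infinite inhomogeneous series and hence lie outside $\Lambda$.
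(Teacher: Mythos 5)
Your argument covers only part (c) of the Proposition, and it does so by taking part (a) as an input: you write that part (a) ``identifies the dual stable Grothendieck polynomials $g_\lambda$ as the partition-indexed coefficients'' of $\mathcal J$ and then only verify the hypotheses of the \cite{KLR} theorem. But (a) is itself one of the claims to be proven. The paper establishes it by a direct computation: starting from the determinant form $\mathcal S(u_1,\dots,u_l)=\det[u_i^{-j+i}H(u_i)]$ of \eqnref{SH}, expanding each binomial factor $(1+\beta u_i)^{1-i}$, and reorganizing the resulting sum into the determinant $\det\bigl[\sum_{m\ge 0}\binom{1-i}{m}\beta^m h_{\lambda_i-i+j-m}\bigr]$, which is exactly the Jacobi\,-\,Trudi expression for $g_\lambda$ from Proposition 4.4 of \cite{Iwao1}. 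Without this computation (or some substitute) nothing links the coefficients of $\mathcal J$ to the dual stable Grothendieck polynomials, and your part (c) then only shows that the coefficients of $\mathcal J$ --- whatever they are --- are polynomial tau-functions. Part (b), the vertex-operator presentation $\mathcal J(u_1,\dots,u_l)=J^+(u_1)\cdots J^+(u_l)(1)$ with $J^+(u)=H(u)E^\perp(-u)H^\perp(-1/\beta)$, is not addressed at all; the paper derives it from the commutation relation $(1+\beta u)H^\perp(-1/\beta)H(u)=H(u)H^\perp(-1/\beta)$ of \lemref{propHE}, moving every factor $H^\perp(-1/\beta)$ to the right, where it acts trivially on $1$, each passage past an $H(u_j)$ contributing $(1+\beta u_j)^{-1}$, so that $u_j$ collects $(1+\beta u_j)^{1-j}$ in total. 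These two omissions are a genuine gap.

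For the part you did treat, your route coincides with the paper's: both invoke the forward direction of \cite{KLR} Theorem 3.1 (as restated in the proof of \thmref{KPs}) applied to the multipliers $A_i(u)=(1+\beta u)^{1-i}$, which are power series and hence admissible Laurent series, with Laurent \emph{polynomials} needed only for the converse. Your additional check that each entry $\sum_{m\ge 0}\binom{1-i}{m}\beta^m h_{\lambda_i-i+j-m}$ is a finite sum (since $h_k=0$ for $k<0$), so that $g_\lambda$ lies in $\Lambda$ rather than in its completion, is a sensible verification that the paper leaves implicit, and your observation that this finiteness is precisely what fails for the non-dual stable Grothendieck polynomials $G_\lambda$ is correct. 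So part (c) of your proposal is sound, but as a proof of the Proposition as stated it is incomplete until (a) and (b) are supplied.
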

 \begin{proof} Similar calculations show that 
 \begin{enumerate}  [label=\alph*)]
 \item
 \begin{align*}
\mathcal J(u_1,\dots, u_l)&=
\prod_{i=1}^{l} \left(1+{\beta}{u_i}\right)^{1-i} \,\cdot   \det [u_{i}^{-j+i}H(u_i)]\\
&=
\prod_{i=1}^{l} \sum_{m_i\ge 0} {{1-i}\choose {m_i}}\beta^{m_i}{u_i^{m_i}}\sum_{\sigma\in S_l}\sum_{\alpha\in \bZ^{l} } (-1)^{\sigma}
h_{\alpha_1}u_1^{\alpha_1+1-\sigma(1)}\dots h_{\alpha_l}u_l^{\alpha_l+l-\sigma(l)}
\\
&=\sum_{\lambda\in \bZ^{l} }  \sum_{\sigma\in S_l}  \sum_{m_i\ge 0} (-1)^{\sigma}
 {{0}\choose {m_1}}{\beta^{m_1}}h_{\lambda_1-1+\sigma(1)-m_1}\dots  {{1-l} \choose {m_l}}{\beta^{m_l}}h_{\lambda_l-l+\sigma(l)-m_l}u_1^{\lambda_1}\dots u_l^{\lambda_l}
 \\
 &=
\sum_{\lambda\in \bZ^{l} }\det\left[
 \sum_{m_i\ge 0} 
 {{1-i}\choose {m_i}}{\beta^{m_i}}h_{\lambda_i-i+j-m_i}
\right]
 u_1^{\lambda_1}\dots u_l^{\lambda_l} 
 =\sum_{\lambda\in \bZ^{l} } g_\lambda u^\lambda.
\end{align*}

\item   
Using  relations of Lemma \ref{propHE}
\[
\left(1+{\beta}{u}\right)H^{\perp}(-1/\beta) H(u)=H(u)H^{\perp}(-1/\beta),
\]
move all terms $H^\perp\left(-1/\beta \right)$ in  the product $J^+(u_1)\dots J^+(u_l)\, (1)$ to the  right,  and use that $H^\perp\left({-1}/{\beta}\right)(1)=1$ and that 
$H(u_1)E^\perp(-u_1)\dots H(u_l)E^\perp(-u_l)(1)=\mathcal S(u_1,\dots, u_l)$ by (\ref{Su}).

\item

By part a),  any dual stable Grothendieck polynomial  $g_\lambda$ is a coefficient of the series $\mathcal J(u_1,\dots, u_l)$. Note that  it is obtained  from  $ \mathcal S(u_1,\dots, u_l)$ 
by multiplication by power series $A_i(u_i)=\sum_{r=0}^{\infty}{{1-i}\choose r}{\beta^r}{u_i^r}$.
Then by  \cite{KLR} Theorem 3.1 symmetric function  $g_\lambda$ is a polynomial  tau-functions  of the KP hierarchy (\ref{binKP}).
\end{enumerate}

 \end{proof}


\begin{thebibliography}{99}




\bibitem{1}  A.~Alldridge, S.~Sahi, H.~Salmasian, 
{\em Schur Q-functions and the Capelli eigenvalue problem for the Lie superalgebra q(n)},
  Representation Theory and Harmonic Analysis on Symmetric Spaces, Contemp. Math.,
Vol. 714, Amer. Math. Soc., Providence, RI, 2018, 1-21.

\bibitem{AY}
A.~ Amanov, D.~Yeliussizov
{\em Determinantal formulas for dual Grothendieck polynomials}, 	arXiv:2003.03907 


\bibitem{BL1} 
L.~Biedenharn, J.~Louck, 
{\em  A new class of symmetric polynomials defined in terms of tableaux},
 Advances in Appl. Math. 10 (1989), 396–438.
 
\bibitem{BL2} 
L.~Biedenharn, J.~Louck, 
{\em Inhomogeneous basis set of symmetric polynomials defined by tableaux}, Proc. Nat. Acad. Sci. U.S.A. 87 (1990), 1441–1445.

\bibitem{Bor14}  
 A.~Borodin, 
{\em On a family of symmetric rational functions},
Adv. Math. 306 (2017), 973–1018.

 
 \bibitem{CL} W. Y. C.~Chen, J. D.~Louck, 
{\em The factorial Schur function},
 J. Math. Phys. 34 (1993), 4144–4160.
 
 
 
 
 \bibitem{DJKM4} 
 E.~Date, M.~Jimbo, M.~Kashiwara, T.~Miwa, \emph{Operator Approach to the Kadomtsev-Petviashvili Equation. Transformation Groups for Soliton Equations III}, J. Phys. Soc. Jpn. {\bf 50}, (1981), no. 11, 3806--3812.
   
    \bibitem{DJKM1} 
  E.~Date, M.~Jimbo, M.~Kashiwara, T.~Miwa,  \emph{ Transformation groups for soliton equations. Euclidean Lie algebras and reduction of the KP hierarchy}, Publ. Res. Inst. Math. Sci. {\bf 18} (1982), 1077--1110. 
    
      \bibitem{DJKM3} 
    E.~Date, M.~Jimbo, M.~Kashiwara, T.~Miwa,  \emph{Transformation groups for soliton equations IV. A new hierarchy of soliton equations of KP type}, Physica 4D (1982), 343--365. 
  
  \bibitem{DJKM2} 
    E.~Date, M.~Jimbo, M.~Kashiwara, T.~Miwa, \emph{Transformation groups for soliton equations}, 
  in: Nonlinear integrable systems -- classical theory and quantum theory eds M. Jimbo and T. Miwa, World Scientific, (1983), 39--120.

 \bibitem{FK}
S.~Fomin,  A.~Kirillov, 
{\em Grothendieck polynomials and the Yang-Baxter equation}, Formal power series and algebraic combinatorics/ Papers from the 6th Conference (FPSAC '94) held in New Brunswick, NJ, May 23–27 (1994). Center for Discrete Mathematics and Theoretical Computer Science (DIMACS),183--189. 
 
 
  
  \bibitem{GH} I.~Goulden,  A.~Hamel, 
{\em Shift operators and factorial symmetric functions}, J. Comb. Theor. A. 69 (1995), 51–60.

  \bibitem{GG} 
 I.~Goulden, C.~Greene,
{\em  A new tableau representation for supersymmetric Schur functions}, 
J. Algebra 170 (1994), 687–704.


\bibitem{HO}
J.~Harnad, A.~Yu.~ Orlov, 
{\em Polynomial KP and BKP $\tau$-functions and correlators},
Ann. Henri Poincaré 22 (2021), no. 9, 3025–3049.

  
\bibitem{IN3}
T.~Ikeda, H.~Naruse,
{\em K-theoretic analogues of factorial Schur P- and Q-functions},
Adv. Math. 243 (2013), 22–66.

 
\bibitem{IS}
T.~Ikeda,  T.~Shimazaki,
{\em A proof of K-theoretic Littlewood-Richardson rules by Bender-Knuth-type involutions},
Math. Res. Lett. 21 (2014), no. 2, 333–339.

\bibitem{I}
T.~ Ikeda,
 {\em Schubert classes in the equivariant cohomology of the Lagrangian Grassmannian},
  Adv. Math. 215 (2007), 1- 23. 
  
 \bibitem{IMN} 
T.~Ikeda, L.C.~ Mihalcea, H.~Naruse,
{\em Factorial P- and Q-Schur functions represent equivariant quantum Schubert classes},
 Osaka J. Math. 53 (2016), 591619.
 
\bibitem{IN1}
 T.~Ikeda, H.~Naruse,
{\em  Excited Young diagrams and equivariant Schubert calculus}, 
Trans. Amer. Math. Soc. 361 (2009), 5193- 5221.


 
 \bibitem{Ivanov} 
 V.N.~Ivanov,{\em Interpolation analogues of Schur Q-functions}, J. Math. Sci. 131 (2005), 5495–5507.

\bibitem{Iwao1}
S.~Iwao,  
{\em Grothendieck polynomials and the boson-fermion correspondence},
 Algebr. Comb. 3 (2020), no. 5, 1023 -- 1040.
 
 \bibitem{Iwao2}
 S.~Iwao,  
 {\em Free-fermions and skew stable Grothendieck polynomials}, 	arXiv:2004.09499. 


  
  \bibitem{JM}
  M.~Jimbo,  T.~Miwa,  \emph{Solitons and infinite-dimensional Lie algebras}, 
Publ. Res. Inst. Math. Sci. {\bf 19} (1983), no. 3, 943 --1001. 
  

  

 \bibitem{Jing3} 
  N.~Jing, {\em Vertex operators, symmetric functions, and the spin group $\Gamma_n$},
J. Algebra {\bf 138} (1991), no. { 2}, 340--398. 

  \bibitem{Jing2} 
  N.~Jing,  {\em Vertex operators and Hall-Littlewood symmetric functions},
 Adv. Math. {\bf 87}(1991), no.{2}, 226--248.  

    \bibitem {JR-genA} N.~Jing, N.~Rozhkovskaya, 
  {\em Generating functions for symmetric and shifted symmetric functions},  Journal of Combinatorics Volume 10, Number 1, 111--127, 2019.



 \bibitem{Kac-begin}
 V.~G.~Kac,{\em Vertex algebras for beginners.}
2nd ed., University Lecture Series, {\bf10}. Amer. Math. Soc., Providence, RI, 1998. 


\bibitem{KL-KP} 
 V.~G.~Kac, J. W.~van de Leur, {\em Equivalence of formulations of the MKP hierarchy and its polynomial tau-functions},
 Jpn. J. Math. {\bf13} (2018), no. 2, 235--271. 
  
\bibitem{KL-BKP}
 V.~G.~Kac, J. W.~van de Leur, {\em Polynomial tau-functions of BKP and DKP hierarchies},
J. Math. Phys. {\bf 60} (2019), no. 7. 

\bibitem{KL-sKP} 
V.~G.~Kac, J. W.~van de Leur, {\em Polynomial tau-functions for the  multi-component KP hierarchy}, 
arXiv:1901.07763.
 
 \bibitem{KLR}
 V.~G.~Kac, J. W.~van de Leur, N.~Rozhkovskaya
{\em Polynomial tau-functions of the KP, BKP, and the s-component KP hierarchies.}
J. Math. Phys. 62 (2021), no. 2, Paper No. 021702.

      \bibitem{Kac-bomb}

 V.~G. Kac, A.~K. Raina,  {\em Bombay lectures on highest weight representations of infinite-dimensional Lie algebras}, 1st ed.,  Advanced Series in Mathematical Physics, 2. World Scientific Publishing Co., NJ(1987).
  \\
V.~G. Kac, A.~K. Raina, N.~Rozhkovskaya, {\em Bombay lectures on highest weight representations of infinite dimensional Lie algebras}, 2nd ed., World Scientific Publishing Co., NJ (2013). 

\bibitem{11}  S.~Korotkikh,
{\em  Dual multiparameter Schur Q-functions},
 J. Math. Sci. 224 (2017), 263- 268.



\bibitem{L1} A. Lascoux, 
{\em Puissances ext\'erieures, d\'eterminants et cycles de Schubert}, Bull Soc. Math.
France 102 (1974), 161–179. 


\bibitem{LS}
 A.~ Lascoux,  M.-P.~Schutzenberger,
 {\em Symmetry and flag manifolds. Invariant Theory.} 
  Lecture Notes in Mathematics 996, 1983, pp. 118–144.
  


   
   \bibitem{12} 
C.~ Lenart,
{\em Combinatorial Aspects of the K-Theory of Grassmannians.}
 Ann. Comb. 4 (2000), no. 1, 67–82.

   
     \bibitem{Md}
  I.~G.~Macdonald, {\em Symmetric functions and Hall polynomials},
 2nd ed., Oxford Univ. Press, New York, 1995. 
 
 \bibitem{M2}  I. G. Macdonald
 {\em Schur functions: theme and variations}, 
 Publ. I.R.M.A. Strasbourg , 498/S–27, Actes 28-e Seminaire Lotharingien (1992), 5–39.

\bibitem{Mo} 
A.~ Molev, 
{\em Factorial supersymmetric Schur functions and super Capelli identities},
 Kirillov’s Seminar on Representation Theory (G. Olshanski, ed.), American Mathematical Society Translations (2), Vol. 181, Amer. Math. Soc., Providence, R.I., 1997, pp. 109–137.

\bibitem{MS} A.~Molev, B.~Sagan, 
{\em A Littlewood–Richardson rule for factorial Schur functions},
Trans. Amer. Math. Soc. 351 (1999), 4429–4443.



\bibitem{MS1}
 K. Motegi,  K. Sakai. 
{\em 
Vertex models, TASEP and Grothendieck polynomials},
 J. Phys. A: Math. Theor.46 (2013), 355201.

\bibitem{MS2} 
K.~ Motegi,  K. Sakai. 
{\em K-theoretic boson-fermion correspondence and melting crystals},
J. Phys. A 47 (2014), no. 44, 445202.
 
 \bibitem{MSdur}
K.~ Motegi, T.~Scrimshaw,
{\em Refined dual Grothendieck polynomials, integrability, and the Schur measure}, 
S\'em. Lothar. Combin. 85B (2021), Art. 23.
 
 
 \bibitem{NN1} M.~Nakagawa, H.~Naruse,
 {\em Generalized (co)homology of the loop spaces of classical groups and the universal factorial Schur P- and Q-functions}, 
 Schubert calculus—Osaka 2012, 337–417, Adv. Stud. Pure Math., 71, Math. Soc. Japan, [Tokyo], 2016.
 

 \bibitem{NN3} M.~Nakagawa, H.~Naruse,
{\em Universal Gysin formulas for the universal Hall-Littlewood functions},  An alpine bouquet of algebraic topology, 201–244,
Contemp. Math., 708, Amer. Math. Soc., [Providence], RI, 2018.

 
  \bibitem{NN2}H.~Naruse,
 {\em Elementary proof and application of the generating functions for generalized Hall-Littlewood functions}, 
J. Algebra 516 (2018), 197–209.


\bibitem{15}  M.~Nazarov, 
{\em  Capelli identities for Lie superalgebras}, 
 Ann.Sci. Ecole Norm. Sup. (4) 30 (1997), 847-872.


\bibitem{NC} G.~ Necoechea, N.~ Rozhkovskaya,
{\em Generalized vertex operators of Hall-Littlewood polynomials as twists of charged free fermions},
J. Math. Sci. (N.Y.) 247 (2020), no. 6, Problems in mathematical analysis. No. 102, 926–938. 






\bibitem{O1} A.~Okounkov, 
{\em Quantum immanants and higher Capelli identities},
 Transformation Groups 1 (1996), 99–126. 

\bibitem{OO1} A.~Okounkov, G.~Olshanski, 
{\em Shifted Schur functions,}  St. Petersburg Math. J. 9 (1998), 239–300.

 
 \bibitem{ORV} 
 G.~Olshanski, A.~Regev, A.~ Vershik,
{\em Frobenius-Schur functions }
Progr. Math., 210, Studies in memory of Issai Schur (Chevaleret/Rehovot, 2000), 251–299.


   \bibitem{NRQ} 
  N.~Rozhkovskaya, \emph{Multiparameter Schur $Q$-functions are solutions of the BKP hierarchy},  SIGMA Symmetry Integrability Geom. Methods Appl.  {\bf 15} (2019), 065.  
  
  

  
  
     \bibitem{Serg} 
     S.~Sahi, H.~Salmasian, V.~Serganova
{\em The Capelli eigenvalue problem for Lie superalgebras}
Math. Z. 294 (2020), no. 1-2, 359–395.  


  
  \bibitem{Sato}
  M.~Sato, {\em Soliton equations as dynamical systems  on infinite-dimensional Grassmann manifold}, RIMS Kokyuroku, {\bf 439} (1981), 30--46.
  
 
\bibitem{Stan}
  R.~P. Stanley, 
{\em Enumerative combinatorics}. {Vol. 2}, Cambridge Univ. Press, Cambridge, (1999). 



  \bibitem{You1} 
   Y.~You, {\em Polynomial solutions of the BKP hierarchy and projective representations of symmetric groups}, in Infinite-dimensional Lie algebras and groups (Luminy-Marseille, 1988) Adv. Ser. Math. Phys. {\bf 7} (1989) 449--464.
  
  \bibitem{You2} 
  Y.~You, {\em DKP and MDKP hierarchy of soliton equations}, Physica D 50 (1991), 429--462.
 

 
  \bibitem{Zel} A.~  Zelevinsky,
{\em Representations of finite classical groups. A Hopf algebra approach.}  Lecture Notes in Mathematics, 869. Springer-Verlag, Berlin-New York, 1981.
 

 
  \end{thebibliography}
\end{document}